\newtheorem{thm}{Theorem}[section]
\newtheorem{lem}[thm]{Lemma}
\newtheorem{prop}[thm]{Proposition}
\newtheorem{rem}[thm]{Remark}
\theoremstyle{definition}
\theoremstyle{remark}
\def \R{\mathbb R}
\def \N{{\mathbb N}}
\def \Z{\mathbb Z}
\def \H{\mathbb H}
\def\C{\mathbb C}
\def\Q{\mathbb Q}
\def\D{\mathbb D}
\newcommand{\vv}{{\mathbf{v}}}
\newcommand{\vu}{{\mathbf{u}}}
\newcommand{\supp}{{\rm supp}}
\newcommand\eq[2]{\begin{equation}\label{eq: #1}{#2}\end{equation}}
\newcommand{\SL}{\operatorname{SL}}
\newcommand{\PSL}{\operatorname{PSL}}
\begin{document}

\author{ Fran\c cois MAUCOURANT}
\address{Universit\'e Rennes I, IRMAR UMR 6625, Campus de Beaulieu 35042 Rennes
cedex -  France}
\email{francois.maucourant@univ-rennes1.fr}

\author{Barbara SCHAPIRA}
\address{L.A.M.F.A. UMR 6140
Universit\'e Picardie Jules Verne
33 rue St Leu 80000 Amiens - France}
\email{barbara.schapira@u-picardie.fr}

\title[Distribution of orbits]{Distribution of orbits in $\R^2$ of a finitely 
generated group of $\SL(2,\R)$} 

\begin{abstract}
In this work, we study the asymptotic distribution of the non discrete orbits of a finitely 
generated group acting linearly on $\R^2$. To do this, we establish new
 equidistribution results for the horocyclic flow on the unitary tangent bundle of the associated surface.
\end{abstract}

\maketitle


\section{Introduction}


\subsection{Problem and State of the art}
 Let $\Gamma_0$ be a discrete subgroup of $G_0=\SL(2,\R)$, acting on the plane $\R^2$. 
The subject of understanding the distribution of the orbits of $\Gamma_0$ on $\R^2$ was
 initiated by Ledrappier \cite{led}, who proved that if $\Gamma_0$ is a lattice containing $-I$,
 and if $\Gamma_T$ is the subset
 $$\Gamma_T=\{\gamma \in\Gamma_0 \, : \, ||\gamma||\leq T \},$$
 for the $l^p$-norm on matrices, $p \in [1,+\infty]$, then for any $\vu \in \R^2$ with dense
 $\Gamma_0$-orbit and any continuous test function 
$f$ from $\R^2$ to $\R$,
we have  
 \eq{eq2}
 {
 \lim_{T\rightarrow +\infty} \frac1T \sum_{\gamma \in \Gamma_T} f(\gamma \vu ) = 
\frac{2}{\mu(\Gamma_0\backslash G_o)}
 \int_{\R^2} \frac{f(\vv)}{| \vu |.| \vv |} d \vv,
 }
 where here $|.|$ stands for the usual $l^p$-norm on $\R^2$, and $\mu(\Gamma_0\backslash G_o)$ 
is the covolume of $\Gamma_0$ with its usual normalization. 
   Independently, Nogueira \cite{no} found an alternative proof of 
this theorem in the important case $\Gamma_0=\SL(2,\Z)$, which did not 
involve the study of the horocyclic flow, as in Ledrappier's, 
but purely arithmetic considerations.

 Various generalizations or strenghtenings of this result have been considered: 
on $\C^2$ and Clifford algebras \cite{lp1}, 
on the $p$-adic plane \cite{lp2}, on $\R^n$ for $n \geq 3$ \cite{g1},
 on other homogeneous manifolds \cite{gw}, with remainder terms \cite{no2}, \cite{MW}, \cite{Pollicott}. 

In all of the aforementionned results, one assumes that $\Gamma_0$ is a lattice in the appropriate group.
 However, in \cite{led2}, Ledrappier manages to deal 
with the case when $\Gamma_0$ is a the fundamental group of an abelian cover 
of a compact hyperbolic surface; he showed that in this case, 
(\ref{eq: eq2}) holds for Lebesgue-almost all vectors $\vu$, 
with the normalisation $1/T$ replaced by an appropriate one;
 he also considered the other invariant ergodic locally finite measures 
on $\R^2$ constructed by Babillot-Ledrappier and proved that (\ref{eq: eq2})
 holds almost surely but in the sense of log-Ces\`aro-averages.\\
 
 The purpose of this paper is to deal with the case where $\Gamma_0$ is a nonelementary
 finitely generated discrete subgroup of $G_o$, without torsion elements other than $-Id$. 
Equivalently, the surface $\Gamma_0\backslash \H$, where $\H$ is the hyperbolic plane,
 is geometrically finite. \\


\subsection{Equidistribution of the horocyclic flow} 
Except in \cite{no2}, all results described above rely on strong ergodic properties of the horocyclic 
flow. On the unit tangent bundle of a finite volume hyperbolic surface, all non-closed horocycles are equidistributed
towards the Liouville measure, 
which is the unique ergodic invariant probability measure of 
full support (Furstenberg \cite{Furstenberg}, Dani-Smillie \cite{DS}). 
In the case of abelian covers of compact hyperbolic surfaces, 
Ledrappier's result in \cite{led2} relies on 
the ergodicity of a family of (infinite) invariant ergodic Radon 
measures for the horocyclic flow, and among them
the Liouville measure (see Babillot-Ledrappier \cite{BL} and Sarig \cite{Sarig}). 

We follow the classical strategy. 
On the unit tangent bundle of a geometrically finite hyperbolic surface $S$, two measures are of
particular importance. The measure of maximal entropy of the geodesic flow, also called Bowen-Margulis-
Patterson-Sullivan measure, denoted here by $m^{ps}$, 
is a finite ergodic invariant measure for the geodesic flow, of full support 
in the non wandering set $\Omega$ of the geodesic flow. This measure and the set $\Omega$ are
 not invariant under the horocyclic flow. The nonwandering set $\mathcal{E}$ of the horocyclic flow is 
the union of horocycles intersecting $\Omega$. 
The horocyclic flow has a unique ergodic invariant measure of 
full support on $\mathcal{E}$ (\cite{Burger}, \cite{Roblin})(see \S \ref{2}),
 which is strongly related to
$m^{ps}$. It was recently named 
the Burger-Roblin measure, and we denote it by $m$. 
The critical exponent $\delta$ of the group $\Gamma=\pi_1(S)$ is defined as
 the exponential growth rate of the orbits of $\Gamma$ 
on $\H$. More precisely,
$\delta=\limsup_{T\to +\infty}\frac{1}{T}\log \#\{\gamma\in\Gamma, d(o,\gamma.o)\le T\}$,
 for any fixed point 
$o\in\H$. 

An essential ingredient in our study is the following equidistribution result. 
 
\begin{thm}\label{magic-equivalent}
 Let $S$ be a nonelementary geometrically finite hyperbolic surface. 
There is a nonnegative continuous function $\tau$ on $T^1S$, such that the following holds.
Let $u\in \mathcal{E}$ be a nonwandering and non-periodic vector for 
the horocyclic flow.  

If $f:T^1S\to \R$ is continuous with compact support, then
$$
\lim_{t\rightarrow +\infty} \frac{1}{t^{\delta}\tau(g^{\log t}u)}\int_{-t}^t f(h^su)ds 
=\frac{1}{m^{ps}(T^1S)}\int_{T^1S}f\,dm \,.
$$
Moreover, $t^{\delta } \tau (g^{\log t} u)\rightarrow +\infty$ 
as $t\rightarrow +\infty$. 

If the surface $S$ is convex-cocompact, the nonwandering set $\Omega\subset \mathcal{E}$ of the geodesic flow
 is compact, the map $\tau$ is bounded from below and from above on $\Omega$,  and the above convergence is uniform in $u\in\Omega$.
\end{thm}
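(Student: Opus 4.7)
The plan is to follow the unique-ergodicity strategy initiated by Roblin, adapted to the pointwise normalization. Define $\tau(v):=\mu^{ss}_v([-1,1])$, where $\mu^{ss}_v$ is the Patterson--Sullivan conditional measure on the strong stable (horocyclic) leaf through $v$, parametrized by the $h^s$-flow parameter; continuity of $\tau$ on $T^1S$ reflects continuous dependence of the PS conditional family. The conformal rule of PS densities under $g^r$ yields
\[
\mu^{ss}_u([-t,t]) \;=\; t^{\delta}\,\tau(v_t), \qquad v_t:=g^{\log t}u,
\]
so the divergence $t^{\delta}\tau(v_t)\to +\infty$ is equivalent to the classical fact that nonwandering, non-periodic vectors for $h^s$ have infinite PS-horocyclic length. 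The theorem then reduces to the vague convergence
\[
\nu_t \;:=\; \frac{1}{\mu^{ss}_u([-t,t])}\int_{-t}^t \delta_{h^s u}\,ds \;\longrightarrow\; \frac{m}{m^{ps}(T^1S)}.
\]

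The first step is to prove $\{\nu_t\}$ is uniformly locally bounded in the vague topology, so that subsequential limits exist. The key input is a comparison estimate: the Lebesgue time spent by the horocyclic orbit of $u$ inside a compact $K\subset T^1S$ is controlled, up to multiplicative constants, by $\mu^{ss}_u([-t,t])$, using the local product decomposition of $m^{ps}$ near $K$ and a covering of the orbit by PS-shadow rectangles. A direct shift estimate,
\[
\bigl|\nu_t(f\circ h^a)-\nu_t(f)\bigr| \;\le\; \frac{2|a|\,\|f\|_\infty}{\mu^{ss}_u([-t,t])} \;\longrightarrow\; 0,
\]
shows any vague limit $\nu_\infty$ is $h^s$-invariant, with $\operatorname{supp}(\nu_\infty)\subset\mathcal{E}$ since $h^su\in\mathcal{E}$.

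By the classification of $h^s$-invariant ergodic Radon measures on $\mathcal{E}$ (\cite{Burger},\cite{Roblin}), any such measure is either a positive scalar multiple of $m$ or supported on a single closed horocycle. Non-periodicity of $u$, combined with a short no-escape argument, rules out the closed-horocycle case: otherwise a positive proportion of the PS-mass along the horocyclic orbit of $u$ would concentrate on an asymptotically periodic excursion, forcing $u$ itself to be periodic. Hence $\nu_\infty=c\,m$ for some $c\ge 0$. To identify $c=1/m^{ps}(T^1S)$ (and exclude $c=0$), I would test $\nu_t$ against a smooth function $\phi$ supported in a small product box around a reference vector $w_0\in\Omega$. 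The commutation $g^{\log t}h^sg^{-\log t}=h^{s/t}$ recasts $\int \phi\, d\nu_t$ as an integral along a unit horocyclic arc at $v_t$; thickening that arc in the center-unstable direction via the local product decomposition of $m^{ps}$ rewrites the pairing, up to $o_\varepsilon(1)$, as a BMS correlation $\int \phi\cdot(\psi_\varepsilon\circ g^{-\log t})\,dm^{ps}$. Mixing of the geodesic flow for $m^{ps}$ (\cite{Roblin}) then yields the correct constant after sending $\varepsilon\to 0$. The uniform statement on $\Omega$ in the convex-cocompact case follows because $\tau$ is bounded away from $0$ and $\infty$ on the compact set $\Omega$ and all estimates above are uniform in $u\in\Omega$.

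The main obstacle is the combination of local boundedness of $\nu_t$ with the exclusion of the periodic-orbit contribution, particularly near cusps of $S$ where the PS conditional on horocycles is irregular; delicate PS-shadow estimates are needed to prevent escape of mass, and this is where the geometrically finite hypothesis is genuinely used.
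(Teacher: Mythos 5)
Your route is genuinely different from the paper's: the paper does not go through the classification of invariant Radon measures at all, but transfers the already-known equidistribution of the \emph{PS-normalized} horospherical averages $M_{r,u}$ toward $m^{ps}/m^{ps}(T^1S)$ (Theorem \ref{equidistribution-PS}, due to Schapira) into the Lebesgue-parametrized statement, via foliation boxes, the transverse measures $\nu_T^{r,u}$ and the ratio $\mu_{H^-}(B_{H^-}(u,r))/\lambda_{H^-}(B\cap B_{H^-}(u,r))$. Your classification-plus-mixing scheme could plausibly recover the \emph{ratio} theorem, but the whole point of Theorem \ref{magic-equivalent} is the exact normalization $t^{\delta}\tau(g^{\log t}u)$ with an identified constant, and the two steps that carry that extra information are exactly where your sketch has gaps. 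First, local boundedness of $\nu_t$: the inequality $\int_{-t}^{t}\mathbf{1}_K(h^su)\,ds\le C_K\,\mu^{ss}_u([-t,t])$ is not a soft consequence of the local product structure. A unit Lebesgue-length arc of $H^-(u)$ lying in a compact set $K$ can carry arbitrarily small, even zero, PS conditional mass (its arc of negative endpoints may fall in a gap of $\Lambda$), so a covering by ``PS-shadow rectangles'' does not compare Lebesgue time with PS mass; in the paper this comparison (positivity and finiteness of the limit of $\mu_{H^-}(B_{H^-}(u,r))/\lambda_{H^-}(B\cap B_{H^-}(u,r))$) is an \emph{output} of the transfer argument, not an a priori input.

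Second, the identification of the constant. After the classification (and after ruling out closed horocycles, which itself needs more than your one-sentence ``no-escape'' argument, plus an ergodic decomposition for possibly non-ergodic Radon limits), you only know $\nu_{t_n}\to c\,m$ with $c$ a priori depending on the subsequence. Pinning $c=1/m^{ps}(T^1S)$ amounts to controlling $\frac{t^{1-\delta}}{\tau(g^{\log t}u)}\int_{-1}^{1}\phi\bigl(g^{-\log t}h^{\sigma}g^{\log t}u\bigr)\,d\sigma$, i.e.\ an equidistribution statement for unit arcs based at the \emph{moving} point $v_t=g^{\log t}u$. Plain mixing of $(g^t)$ with respect to $m^{ps}$ applies to fixed observables; your thickened set depends on $t$, and in the geometrically finite case with cusps $v_t$ makes excursions where $\tau(v_t)\asymp e^{(1-\delta)d(\pi(v_t),K_0)}$ degenerates and the product boxes around the arc are not uniform, so the correlation asymptotics you invoke are not available as stated. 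Making the argument uniform in the basepoint is precisely the content of Theorem \ref{equidistribution-PS} (Schapira) and of the Babillot--Roblin flowed equidistribution (Theorem \ref{magic-equivalent-flowed}), i.e.\ the external ingredient the paper builds on; as written, your identification step silently assumes it. Note also that even granting pointwise convergence, the uniform statement on $\Omega$ in the convex-cocompact case is not automatic from boundedness of $\tau$: the paper needs an equicontinuity argument for the maps $u\mapsto\int_{-1}^{1}f\circ g^{-\log t}(h^su)\,ds$.
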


Remark that if $f$, $g$ are two continous maps with compact support from $T^1S$ to $\R$, 
we retrieve the ratio equidistribution result of \cite[Th 1.1]{Scha2}: 
$$
\lim_{t\rightarrow +\infty} \frac{\int_{-t}^t f(h^su)ds }{\int_{-t}^t g(h^su)ds} 
=
\frac{\int_{T^1S}f\,dm }{\int_{T^1S}g\,dm}\,.
$$
Thus, theorem \ref{magic-equivalent} above seems apparently stronger than this ratio convergence. 
In fact, this improvement of the equidistribution statement has been obtained here by following the 
arguments of \cite{Scha2}.

Geometrically, the function $\tau$ in the above theorem is important. 
As we will see in the proof of theorem \ref{magic-equivalent}, 
it is the measure of a one-dimensional ball of radius $1$ and center $u$ 
on the horocycle $(h^su)_{s\in\R}$, for the conditional measure of $m^{ps}$. 
More precisely, we have 
$$
\tau(u)=\mu_{H^-(u)}((h^su)_{|s|\le 1})\quad\mbox{and}\quad  t^{\delta}\tau(g^{\log t}u)=
\mu_{H^-(u)}((h^su)_{|s|\le t})\,,
$$
where $\mu_{H^-}(u)$ is the conditional measure of the Patterson-Sullivan 
measure on the strong stable 
horocycle $H^-(u)=(h^su)_{s\in\R}$. 
In particular, $\tau$ is a continuous map, positive only on a  neighbourhood 
at bounded distance of $\Omega$, and zero outside.
When $\Omega$ is a compact set (i.e. $S$ is convex-cocompact), $\tau$ is bounded. 
When $S$ is geometrically finite, with infinite volume and cusps, we will see in  Proposition \ref{mesure-des-boules-horospheriques}
that, up to multiplicative constants, $\tau(u)$ is equivalent to $e^{(1-\delta)d( u, K)}$, for $K$ an arbitrary fixed compact set. 

\begin{rem} \rm  By what precedes, we see that the  quantity $\tau(g^{\log t}u)$ is geometrically very simple to understand, and oscillates
between $1$ when $g^{\log t}u$ belongs to $K$, and $t^{1-\delta}$ when $g^{\log t}u$ is as far as possible from $K$. 
Therefore, the Birkhoff integral oscillates (up to multiplicative constants) between $t^\delta$ and $t$ times $\frac{\int_{T^1M} f dm}{m^{ps}(T^1M)}$.  
Let us emphasize that this kind of statement, with a precise equivalent of a Birkhoff integral, is 
quite rare in infinite ergodic theory. 
\end{rem}

 For a surface $S$ of finite volume, we have
$\delta=1$, and with our normalizations,  $\tau=2/\pi$, and $\pi^2 m^{ps}=\pi m=Liou$, 
where $Liou$ is the usual Liouville measure.
(See also \cite[Prop. 10]{PP} for explicit comparisons between Liouville and other measures, and other notational conventions.)\\


 \subsection{Orbit distribution on the plane}

 Let $\Lambda \subset \mathbb{P}^1$ be the limit set of $\Gamma_0$ 
and $\mathcal{C}(\Gamma_0)\subset \R^2\setminus\{0\}$ be the cone of vectors
 whose projective component lies in $\Lambda$.
\begin{figure}[ht!]
\epsfig{file=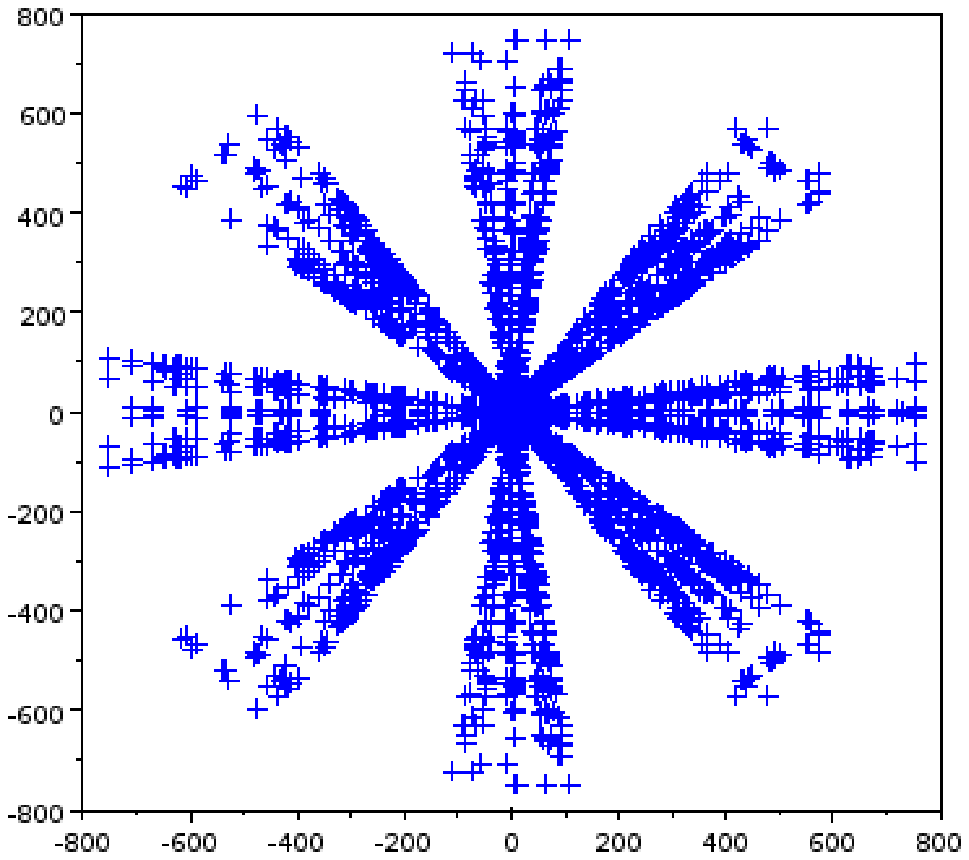,width=150.00pt,height=150.00pt}
\epsfig{file=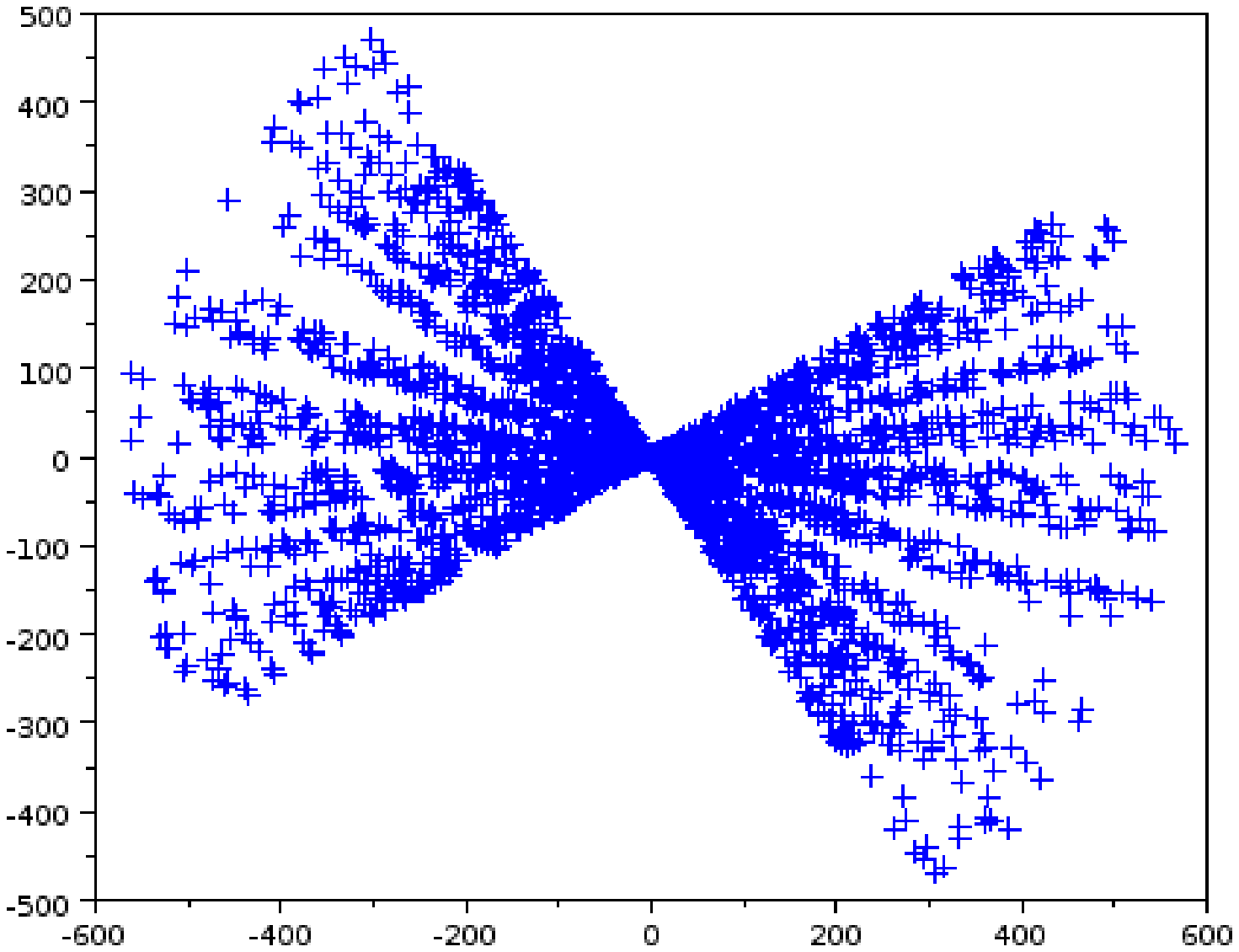,width=150.00pt,height=150.00pt}
\caption{The cloud for a convex-cocompact free group (left) and for a free group with a parabolic element (right)}
\epsfig{file=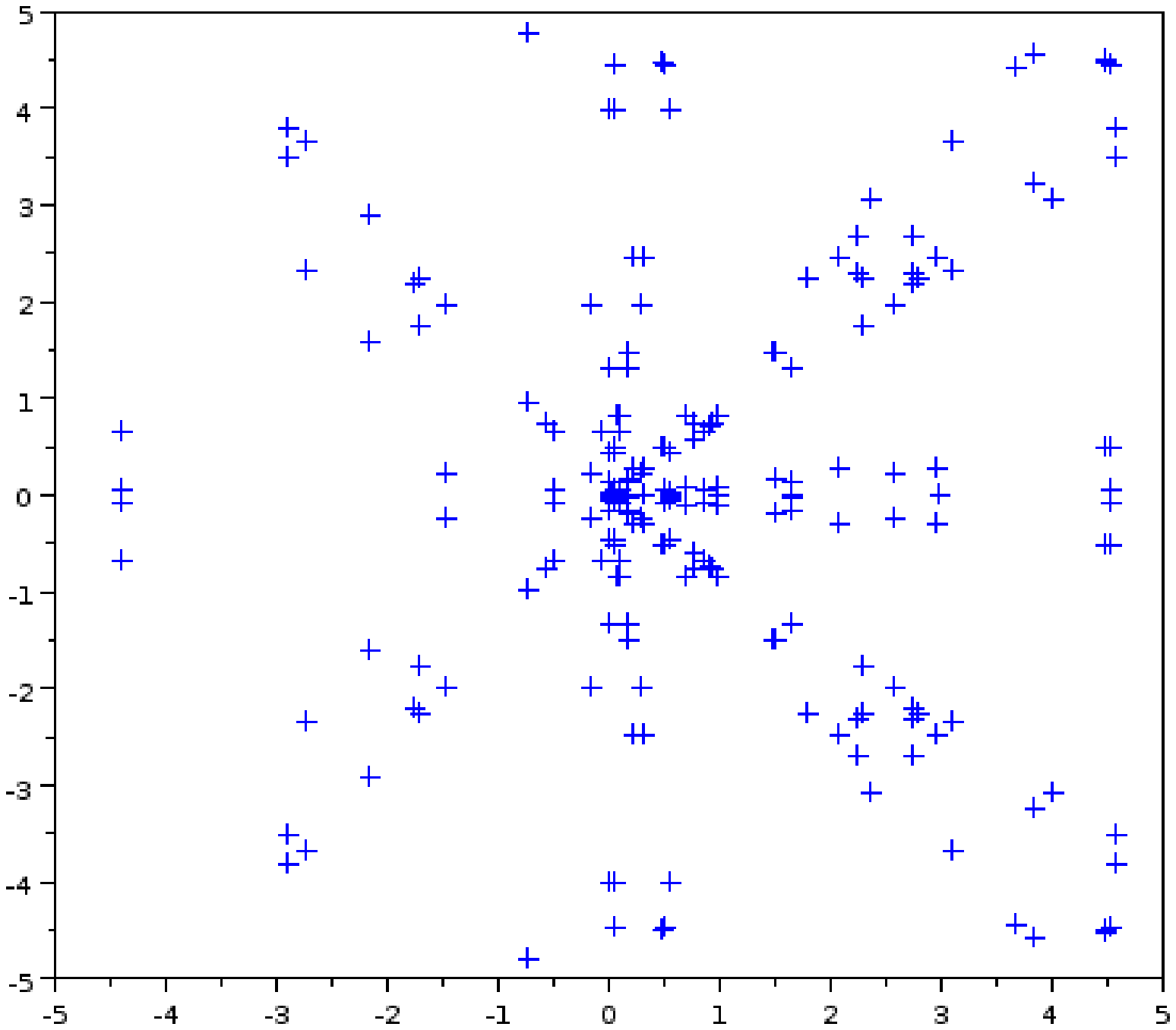,width=200.00pt,height=200.00pt}
\caption{Local distribution for a convex-cocompact free group}
\end{figure} 
 
  This set carries a unique
 (up to scalar multiple) $\Gamma_0$-invariant ergodic measure $\bar{\mu}$ 
of full support, which in polar coordinates is written
 $d\bar{\mu}=2r^{2\delta-1}dr d\bar{\nu}_o$, where $\bar{\nu}_o$
 is the symmetric lift of an appropriate Patterson measure and $\delta$ 
the critical exponent of $\Gamma_0$ (see \S \ref{2} and \S \ref{Relahoro}).

When $S=\Gamma_0\backslash \H$ is of finite volume, with our normalizations,
 $$
\pi \bar{\mu}=\mathcal{L}, \mbox{ where }\mathcal{L} \mbox{ is the Lebesgue measure on }\R^2.$$
  
 In the case of a convex-cocompact group, we show that an analogue of 
(\ref{eq: eq2}) holds 'up to multiplicative constants'.

Let us introduce first a notation. As 
  shown in \cite{gw}, it is possible to  consider an arbitrary norm $||.||$ 
 rather than a $l^p$ norm on $M(2,\R)$ in the definition of $\Gamma_T$. To do that, 
 one should replace the product $|\vu|.|\vv|$ in the right-hand side 
of (\ref{eq: eq2}) by the expression $\vv\star \vu$, which is defined by
$$
\vv\star \vu=
\left\| \left( \begin{array}{cc} - \vu_y\vv_x & \vu_x\vv_x \\ -\vu_y\vv_y & \vu_x\vv_y \end{array} \right) \right\|.
$$
In the case where $\|.\|$ (resp. $|.|$) 
is the $l^p$-norm on $M(2,\R)$ (resp. on $\R^2$), one can check that 
$\vv\star\vu=|\vv||\vu|$. 

Our first result can now be stated.
\begin{thm} \label{th:convexcocompact}
Let $\Gamma_0<\SL(2,\R)$ a convex-cocompact group, which contains $-I$ as unique element of torsion.  
Let $\alpha \in (-1,1)$ be a scaling factor.
For all $\vu \in \mathcal{C}(\Gamma_0)$, for all nonzero, 
nonnegative, continuous and compactly supported functions $f$ on $\R^2\setminus\{0\}$,
we have as $T \rightarrow +\infty$:
\eq{resultat1}{
\frac{1}{T^{(1+\alpha)\delta}}\sum_{\gamma \in \Gamma_T}f\left(\frac{\gamma \vu}{T^\alpha}\right)
\asymp \int_{\R^2} \frac{f(\vv)}{(\vv \star \vu)^{\delta}}d\bar{\mu}(\vv),
}
where the implied constants do not depend on $\vu$ nor on $f$.
\end{thm}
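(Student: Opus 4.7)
The plan is to follow Ledrappier's strategy \cite{led}: rewrite the orbit sum as a thickened horocyclic integral on $T^1 S = \Gamma_0 \backslash \SL(2,\R)$, and then apply Theorem \ref{magic-equivalent}. Fix the reference vector $e_1 = (1,0)$ (viewed as a column) and choose $g_0 \in \SL(2,\R)$ with $g_0 e_1 = \vu$. The stabilizer of $e_1$ in $\SL(2,\R)$ is the upper-triangular unipotent group $N = \{n_s\}$, i.e.\ the strong stable horocyclic subgroup; since $\Gamma_0$ is convex-cocompact, it contains no parabolic elements, so $\Gamma_0 \cap g_0 N g_0^{-1} = \{I\}$. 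Writing the Iwasawa decomposition $\gamma g_0 = k_{\theta(\gamma)}\, a_{t(\gamma)}\, n_{s(\gamma)}$, one has $\gamma \vu = e^{t(\gamma)} k_{\theta(\gamma)} e_1$, so that the polar radius of $\gamma \vu$ is $e^{t(\gamma)}$ and its direction is $\theta(\gamma)$. The hypothesis $\vu \in \mathcal{C}(\Gamma_0)$ is equivalent to the class $[g_0] \in T^1 S$ lying in the non-wandering set $\mathcal{E}$ of the horocycle flow, and in the absence of parabolics this class is automatically non-periodic.

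Next I would translate the norm condition into a horocyclic segment length. A direct Iwasawa computation yields $\|\gamma\| \asymp e^{t(\gamma)} \cdot (\vv \star \vu)/|\vu| \cdot (1 + |s(\gamma)|)$ with $\vv = k_{\theta(\gamma)} e_1$ and constants depending only on the chosen norm; in particular, the condition $\|\gamma\| \leq T$ localizes $s(\gamma)$ in an interval of length comparable to $T |\vu| e^{-t(\gamma)} / (\vv \star \vu)$. After a standard smoothing of $f$ and of $\mathbf{1}_{\Gamma_T}$ and a wavefront-type desingularization, Fubini presents the sum as, for each fixed $(t,\theta)$, a horocyclic integral of a test function along a segment of the above length on the horocycle through $g^t g_0 \in \mathcal{E}$.

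Now apply Theorem \ref{magic-equivalent}: each such horocyclic integral is $\asymp \bigl(T e^{-t}/(\vv\star\vu)\bigr)^{\delta}$ times $\frac{1}{m^{ps}(T^1 S)} \int \tilde{f}\, dm$, with implied constants uniform in $(t,\theta)$ because $\Omega$ is compact and $\tau$ is bounded above and below on $\Omega$. Integrating over the remaining parameters and performing the change of variable $r = e^t/T^\alpha$ gives the overall scaling $T^{(1+\alpha)\delta}$, while the local product structure of the Burger-Roblin measure (Patterson density on angles, Lebesgue on horocycles) identifies the resulting $(r,\theta)$-integral of $\tilde f$ against $m$ with $\int_{\mathcal{C}(\Gamma_0)} f(\vv)\, (\vv\star\vu)^{-\delta}\, d\bar{\mu}(\vv)$ via $d\bar\mu = 2\, r^{2\delta-1}\, dr\, d\bar\nu_o$.

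The main obstacle is a rigorous treatment of the set $\Gamma_T$, which is not a product in Iwasawa coordinates: a wavefront / well-rounded approximation is needed to smooth $\mathbf{1}_{\Gamma_T}$ by convolution with a small bump, apply the equidistribution theorem to the smoothed version, and show that the boundary layer contributes negligibly compared to the main term of order $T^{(1+\alpha)\delta}$, all uniformly in $\vu \in \mathcal{C}(\Gamma_0)$ and in $f$. A secondary point is that $\tau$ is only bounded above and below on $\Omega$ (not constant), which is precisely why the conclusion is stated as $\asymp$ rather than $=$: the implicit constants absorb the ratio $\sup_\Omega \tau/\inf_\Omega \tau$ together with the constants from the wavefront approximation.
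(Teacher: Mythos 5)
Your overall strategy is the one the paper uses: thicken $f$ into an automorphic function on $T^1S$, convert the orbital sum into horocyclic integrals via the relation between $\|\gamma\|$ and the horocyclic parameter (the cocycle $c_{\vu}(\gamma)$), apply Theorem \ref{magic-equivalent}, and exploit compactness of $\Omega$ and the two-sided bounds on $\tau$ to get $\asymp$. The genuine gap is your treatment of the scaling $\alpha\neq 0$. Since $\Psi(\vu/T^{\alpha})=\Psi(\vu)a_{-2\alpha\log T}$, the horocyclic integral you obtain runs along the $(h^s)$-orbit of the \emph{moving} point $g^{-2\alpha\log T}\Gamma\Psi(\vu)$, so the pointwise statement of Theorem \ref{magic-equivalent} cannot be invoked ``for each fixed $(t,\theta)$''; worse, this base point need not lie in, or even near, $\Omega$ (for $\alpha>0$ it moves along the backward geodesic of $\Gamma\Psi(\vu)$, whose negative endpoint is in general not in $\Lambda$, so it escapes into a funnel). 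What is actually needed, and what the paper supplies, is: (i) the \emph{uniform} convergence in Theorem \ref{magic-equivalent} on $\Omega$, available precisely because $\Gamma$ is convex-cocompact; (ii) writing $\Gamma\Psi(\vu)=h^{s_0}u_0$ with $u_0\in\Omega$, so that $g^{-2\alpha\log T}u_0\in\Omega$, and checking that the induced offset $s_0T^{2\alpha}$ of the integration window is negligible against the window length $\asymp T^{1+\alpha}$ (this is where $\alpha<1$ enters); and (iii) comparing $\tau$ at $g^{(1-\alpha)\log T-\log R-\eta}\Gamma\Psi(\vu)$ with its value at the corresponding point of $\Omega$. Boundedness of $\tau$ on $\Omega$, which is all you invoke, does not substitute for these steps; their necessity is exactly why the geometrically finite case (Remark \ref{boundedorbits}) is restricted to $\alpha=0$ in the paper.

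Two secondary points. The ``Fubini for each fixed $(t,\theta)$'' is not a legitimate operation as stated: after thickening, the whole sum is a \emph{single} horocyclic integral of $\bar f$, and the dependence of the admissible $s$-window on the target point has to be handled either by decomposing $f=\sum_i f_i$ into pieces on which $\vv\mapsto\vv\star\vu$ varies by at most a fixed factor (the paper's condition $v^{(\vu)}(f_i)\le e^{1/2}$, which is also what makes the implied constants independent of $f$ and of $\vu$), or by the $\Theta^{\pm}$ bookkeeping of Section \ref{6}; you should make one of these explicit. Finally, the wavefront-type smoothing of $\mathbf{1}_{\Gamma_T}$ with a ``negligible boundary layer'' is both unnecessary for an $\asymp$ statement --- Lemmas \ref{lemme31MW} and \ref{estimeelemme} sandwich the sharp sum exactly between two horocyclic integrals --- and cannot succeed as a route to a true limit, since by Proposition \ref{theresnolimit} no limit exists in general.
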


The reader who is not familiar with the subject can consider only the case $\alpha=0$. The scaling factor $T^\alpha$ is interesting for the following reason. 
A typical orbit $\Gamma.\vu$ has no reason to stay away from $0$ and $\infty$. In particular, as $f$ is compactly supported, 
among all elements of $\Gamma_T.\vu$, those belonging to the support of $f$ are very few (of the order of $T^\delta$). The theorem above for $\alpha=0$
is somehow a large deviations result about the rare elements of the orbit $\Gamma.\vu$ in the support of $f$.  
It is therefore interesting to rescale the picture, to see more and more elements of $\Gamma.\vu$, near $0$ (when $\alpha<0$) or $+\infty$ (when $\alpha>0$). 
The largest interesting factor is $\alpha=1$, as it will be seen in theorem \ref{largescale} below. 
 

\begin{rem}\label{boundedorbits}\rm This result is also true for general nonelementary 
geometrically finite groups without torsion except $-I$, 
if we restrict to all $\vu\in \mathcal{C}(\Gamma_0)$,
which correspond on the unit tangent bundle $T^1S=PSL(2,\R)/\Gamma_0$ 
of the corresponding hyperbolic surface to vectors $u$ whose generated geodesic ray is bounded, with a constant
depending on the geodesic ray $(g^tu)_{t\ge 0}$ but not on $f$. 
This will be clear in the proof of theorem \ref{th:convexcocompact}.
However, this set of vectors is of $m^{ps}$-measure zero 
on a geometrically finite surface with cusps. 
\end{rem}

The symbol $a(T) \asymp b(T)$ means that the ratio $a(T)/b(T)$ lies 
between two positive constants for $T$ sufficiently large.
Note that for any vector $\vu \notin \mathcal{C}(\Gamma_0)$, there is a
 constant $c>0$ (depending on $u$) 
such that $|\gamma \vu|\geq c ||\gamma||$,
 so the condition $\vu \in \mathcal{C}(\Gamma_0)$ 
is clearly necessary in the previous Theorem.

Note that the symbol $\asymp$ in (\ref{eq: resultat1}) 
cannot be replaced by a limit
 in a strong sense, namely:

\begin{prop} \label{theresnolimit}
 Assume that $\Gamma_0$ is nonelementary, geometrically finite, with infinite volume, 
and  contains $-I$ as unique element of torsion.  
There exists $f$,$g$ as in Theorem \ref{th:convexcocompact} with
 $\int fd\bar{\mu}>0$, $\int gd\bar{\mu}>0$, such that for $\bar{\mu}$-almost every $\vu$, the ratio
$$
\frac{\sum_{\gamma \in \Gamma_T}f(\gamma \vu)}{\sum_{\gamma \in \Gamma_T}g(\gamma \vu)},
$$
has no limit as $T\rightarrow +\infty$.
\end{prop}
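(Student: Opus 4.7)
The plan is to exploit the oscillation of $\tau(g^{\log T}u)$ highlighted in Theorem \ref{magic-equivalent}, together with the mismatch between the discrete orbital sum and the continuous horocyclic Birkhoff integral, in order to produce two distinct subsequential limits of the ratio along cusp-excursion times versus compact-core return times.

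First I would rewrite $\sum_{\gamma\in\Gamma_T}f(\gamma\vu)$ via the unfolding used in the proof of Theorem \ref{th:convexcocompact}: identify $\R^2\setminus\{0\}\cong \SL(2,\R)/N$ where $N$ is the upper triangular unipotent subgroup, and pass to the associated vector $u\in T^1S$ lifting $\vu$. This expresses the sum as a discrete counting functional along the strong-stable horocycle $H^-(u)$ applied to a test function $F_f$ on $T^1S$ built from $f$. The resulting functional is sandwiched (only in the $\asymp$-sense of Theorem \ref{th:convexcocompact} and Remark \ref{boundedorbits}) by the continuous Birkhoff integral $\int_{-T}^T F_f(h^s u)\,ds$, with multiplicative constants that in general do not tend to $1$ along the full sequence $T\to+\infty$.

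Next I would apply Theorem \ref{magic-equivalent} to the continuous integrals, obtaining $\int_{-T}^T F_f(h^s u)\,ds\sim T^{\delta}\tau(g^{\log T}u)\cdot C_{F_f}$ and similarly for $g$, with $C_{F_f}=\int F_f\,dm/m^{ps}(T^1S)$. The ratio of continuous integrals therefore converges to $C_{F_f}/C_{F_g}$, so the proposition reduces to showing that the $\asymp$ gap from the first step destroys this convergence when one passes to the sums. I would then choose $f,g$ with disjoint, sharply localized supports so that $F_f$ has its mass concentrated inside a deep cusp neighborhood of $T^1S$, while $F_g$ has its mass in the compact core. The $\gamma$'s contributing to $\sum f(\gamma\vu)$ are then tied to a parabolic subgroup and appear in sparse discrete bursts, whereas those contributing to $\sum g(\gamma\vu)$ accumulate steadily.

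Finally, by ergodicity of the geodesic flow on $(T^1S,m^{ps})$ and a Sullivan--Khintchine type excursion argument, for $\bar\mu$-almost every $\vu$ the ray $(g^t u)_{t\ge 0}$ both makes arbitrarily deep cusp excursions at a sequence $t_n\to+\infty$ and returns infinitely often to a fixed compact subset of $T^1S$ at times $t'_n\to+\infty$. Setting $T_n=e^{t_n}$ and $T'_n=e^{t'_n}$, the goal is to show that along $(T'_n)$ the discretization gap is negligible and the ratio approaches $C_{F_f}/C_{F_g}$, whereas along $(T_n)$ a single cusp burst inflates $\sum f(\gamma\vu)$ without affecting $\sum g(\gamma\vu)$, yielding a strictly different limit value; two distinct subsequential limits imply the ratio has no limit. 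The principal obstacle is to quantify the cusp-burst contribution precisely enough to show it is $\Theta(1)$ relative to the main term, rather than being absorbed into a multiplicative factor continuous in $T$ that would cancel in the ratio; this requires a careful count of the parabolic $\Gamma_0$-lattice points visible on the horocycle at scale $T$ during a cusp excursion, using the geometric description of $\tau$ recalled after Theorem \ref{magic-equivalent}.
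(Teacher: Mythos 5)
Your strategy misidentifies where the oscillation comes from, and as a consequence it cannot prove the statement as written. First, the proposition concerns \emph{every} nonelementary geometrically finite group of infinite covolume, in particular convex-cocompact ones, which have no cusps at all; an argument built on deep cusp excursions, parabolic bursts and a Sullivan--Khintchine analysis is silent in that case, whereas the statement is still true there. Second, the ``discretization gap'' you propose to exploit does not exist at the order you need: Lemmas \ref{lemme31MW} and \ref{estimeelemme} sandwich $\sum_{\gamma\in\Gamma_T}f(\gamma\vu)$ between two \emph{exact} horocyclic integrals of $\bar f$, over intervals of half-length $1+(T+D)/r^{(\vu)}(f)$ and $(T-D)/R^{(\vu)}(f)-1$ respectively; for a bump function with $v^{(\vu)}(f)=R/r$ close to $1$ the sum is therefore tracked by a continuous Birkhoff integral up to a multiplicative error as close to $1$ as you like, cusp or no cusp. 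In particular, if you choose $f$ and $g$ with disjoint supports living at the \emph{same} radial scale (same range of $\vv\star\vu$), even arranging $\bar f$ to sit in a cusp neighbourhood and $\bar g$ in the compact core, both sums are comparable to integrals of $\bar f$, $\bar g$ over intervals of essentially equal length, and Theorem \ref{magic-equivalent} then forces the ratio to \emph{converge} to $\int \bar f\,dm/\int \bar g\,dm$ --- the opposite of what you want. Your claimed dichotomy (convergence along compact-core return times, a different limit along cusp times) is exactly the quantitative step you flag as an obstacle, and it is not merely technical: it contradicts the sandwiching above.

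The mechanism the paper uses is different and is the ingredient your sketch never isolates. One takes $g(\vv)=f(2\vv)$, i.e.\ the same bump at a different \emph{radial scale}, so that the two sums correspond, via (\ref{eq: minoration3}) and (\ref{eq: majoration3}) with $\alpha=0$, to horocyclic integrals over intervals whose lengths differ by a factor bounded in a fixed compact subset of $(1,\infty)$; the ratio of the sums is then controlled by quotients of the form $\tau(g^{\log T+a}u)/\tau(g^{\log T+b}u)$ with $a-b$ ranging in $[\log(5/4),\log 4]$. The heart of the proof is then Lemma \ref{variation}: for $\hat\mu$-a.e.\ horocycle, $\limsup_{t}\tau(g^{t+t_0}u)/\tau(g^{t}u)>e^{\epsilon}$ and $\liminf_{t}\tau(g^{t+t_0}u)/\tau(g^{t}u)<e^{-\epsilon}$. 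This oscillation is a consequence of infinite covolume alone: density of the ordinary set makes $\tau$ non-constant along geodesic orbits, and ergodicity (plus, in the cusped case, Proposition \ref{mesure-des-boules-horospheriques} and Lemma \ref{indephoro}) propagates the strict inequalities $c^-<1<c^+$ to almost every orbit. Your excursion argument could at best replace this lemma when cusps are present, but without the different-scale choice of $f$ and $g$ you have no way to convert the oscillation of $\tau$ into oscillation of the ratio of the two sums.
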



 However, the variations for the ratio between the left hand-side and 
right-hand side of (\ref{eq: resultat1}) disappear under average. 
More precisely, as in the case of $\Z^d$-covers of a 
compact surface \cite{led2}, we obtain an almost-sure log-Ces\`aro 
convergence, under the hypothesis that $\Gamma_0$ is convex-cocompact, 
or geometrically finite with critical exponent $\delta>2/3$.
 This kind of Log-Cesaro-average convergence can be compared to results of Fisher, in  \cite{Fisher}  for 
example. 

\begin{thm}\label{almost-sure-cesaro} Assume that $\Gamma_0$ is
 a nonelementary group containing $-I$ as unique element of torsion. 
Write $S=\Gamma_0\backslash \H$. 
\begin{enumerate}
\item If $\Gamma_0$ is convex-cocompact, then, 
 with the same notations as in Theorem \ref{th:convexcocompact}, 
we have for $\bar{\mu}$-almost every $\vu\in\mathcal{C}(\Gamma_0)$,
\eq{logcesaro}{ 
\lim_{S\rightarrow +\infty} \frac{1}{\log S}\int_1^S \frac{1}{T^{(1+\alpha)\delta}} 
 \sum_{\gamma \in \Gamma_T}f\left(\frac{\gamma \vu}{T^\alpha}\right) \frac{dT}{T}
= \frac{2\int_{T^1S} \tau dm^{ps}}{\left(m^{ps}(T^1S)\right)^2}
\int_{\R^2} \frac{f(\vv)}{(\vv \star \vu)^{\delta}}d\bar{\mu}(\vv).   
}
The function $\tau$ is the same as in Theorem \ref{magic-equivalent}.
\item If $\Gamma_0$ is geometrically finite with cusps,   with  
critical exponent   $\delta>2/3$, and $\alpha=0$, then 
we have for $\bar{\mu}$-almost every $\vu\in\mathcal{C}(\Gamma_0)$,
\eq{logcesaro-gf}{ 
\lim_{S\rightarrow +\infty} \frac{1}{\log S}\int_1^S \frac{1}{T^{ \delta}} 
 \sum_{\gamma \in \Gamma_T}f\left( \gamma \vu \right) \frac{dT}{T}
= \frac{2\int_{T^1S} \tau dm^{ps}}{\left(m^{ps}(T^1S)\right)^2}
\int_{\R^2} \frac{f(\vv)}{(\vv \star \vu)^{\delta}}d\bar{\mu}(\vv).   
} 
\end{enumerate}
\end{thm}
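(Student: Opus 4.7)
The plan is to combine the pointwise equidistribution of Theorem \ref{magic-equivalent} with Birkhoff's ergodic theorem for the geodesic flow, via a logarithmic time-change that smooths out the oscillations of the function $\tau$.

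First, I would use the dictionary of Section \ref{Relahoro} between $\Gamma_0$-orbits on $\R^2\setminus\{0\}$ and horocyclic orbits on $T^1S$. To each $\vu\in\mathcal{C}(\Gamma_0)$ one associates a vector $u\in\mathcal{E}$, and the orbital sum
\[
\frac{1}{T^{(1+\alpha)\delta}}\sum_{\gamma\in\Gamma_T} f\!\left(\frac{\gamma\vu}{T^\alpha}\right)
\]
is expressed, after a standard unfolding argument, as a horocyclic integral
\[
\frac{1}{T^{(1+\alpha)\delta}}\int_{-T^{1+\alpha}}^{T^{1+\alpha}} F_{\vu,f}(h^s u)\,ds + o(1),
\]
where $F_{\vu,f}$ is continuous and compactly supported on $T^1S$, with $\int F_{\vu,f}\,dm = 2\int f(\vv)(\vv\star\vu)^{-\delta}\,d\bar\mu(\vv)$ (the factor $2$ coming from the symmetric lift built into $\bar\mu$). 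This is a routine computation in the Patterson-Sullivan framework for $\SL(2,\R)$.

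Second, applying Theorem \ref{magic-equivalent} with $t=T^{1+\alpha}$, the normalized orbit sum becomes
\[
\tau\!\left(g^{(1+\alpha)\log T}u\right)\cdot\bigl(C(\vu,f)+o(1)\bigr), \qquad C(\vu,f):=\frac{2}{m^{ps}(T^1S)}\int\frac{f(\vv)}{(\vv\star\vu)^\delta}\,d\bar\mu(\vv).
\]
The oscillations of $\tau(g^{(1+\alpha)\log T}u)$ between $1$ and $T^{(1+\alpha)(1-\delta)}$ are precisely what prevents a pointwise limit (as confirmed by Proposition \ref{theresnolimit}), so logarithmic averaging is essential. The change of variable $s=(1+\alpha)\log T$, $ds=(1+\alpha)\,dT/T$, transforms the log-Cesàro average of $\tau(g^{(1+\alpha)\log T}u)$ into
\[
\frac{1}{\log S}\int_1^S \tau\!\left(g^{(1+\alpha)\log T}u\right)\frac{dT}{T} = \frac{1}{(1+\alpha)\log S}\int_0^{(1+\alpha)\log S}\tau(g^s u)\,ds,
\]
and Birkhoff's theorem applied to the $g^s$-ergodic probability measure $m^{ps}/m^{ps}(T^1S)$ yields $m^{ps}$-almost-sure convergence to $\int\tau\,dm^{ps}/m^{ps}(T^1S)$. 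Transferring genericity from $m^{ps}$ to $\bar\mu$ via the fibration of Section \ref{Relahoro}, and controlling the error $\tau(g^{(1+\alpha)\log T}u)\cdot o(1)$ after Cesàro averaging, one recovers the prefactor $2\int\tau\,dm^{ps}/(m^{ps}(T^1S))^2$ of (\ref{eq: logcesaro}) and (\ref{eq: logcesaro-gf}).

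In the convex-cocompact case of part~(1), $\tau$ is bounded on the compact nonwandering set $\Omega$, the convergence in Theorem \ref{magic-equivalent} is uniform on $\Omega$, and Cesàro control of the error is immediate by splitting the interval $[1,S]$ at some $T_0$ beyond which $o(1)<\eta$. The main obstacle is part~(2): in the geometrically finite case with cusps, $\tau(u)\asymp e^{(1-\delta)d(u,K)}$ is unbounded and the convergence in Theorem \ref{magic-equivalent} is merely pointwise, so one must control the integrand $\tau(g^{(1+\alpha)\log T}u)\cdot o(1)$ along cusp excursions of the generic orbit. The threshold $\delta>2/3$ is where this becomes tractable: beyond this exponent, the decay of the $m^{ps}$-measure of deep cusp neighborhoods, combined with a Sullivan-type logarithm law for the cusp excursions of a $\bar\mu$-generic geodesic, makes the error contribution negligible after log-averaging. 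This quantitative cusp analysis is, I expect, the most delicate step.
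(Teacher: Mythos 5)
Your overall strategy (orbit sums $\to$ horocyclic integrals via $\Psi$, Theorem \ref{magic-equivalent}, then a change of variables turning the log-Ces\`aro average into a Birkhoff average of $\tau$ along the geodesic flow) is indeed the paper's strategy, but two steps you treat as routine are precisely where the work lies, and your account of part (2) mislocates the role of $\delta>2/3$. First, the transfer of genericity: Birkhoff's theorem gives convergence of $\frac1\rho\int_0^\rho\tau(g^tu)\,dt$ for $m^{ps}$-almost every $u$, but the theorem concerns the specific section points $u=\Gamma\Psi(\vu)$, one per horocycle, and $\bar\mu$ corresponds to the transverse measure $\hat\mu$ on $\mathcal{H}$, not to $m^{ps}$. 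A full $m^{ps}$-measure set only guarantees, for $\hat\mu$-a.e.\ horocycle, a full-$\mu_{H^-}$-measure set of generic points on that horocycle, and $\Gamma\Psi(\vu)$ may lie in the exceptional set. One must show that Birkhoff genericity for $\tau$ is constant along strong stable horocycles: in the convex-cocompact case this follows from uniform continuity of $\tau$ together with $d(g^th^{s_0}u,g^tu)\to0$, but in the cusped case $\tau$ is unbounded and the paper has to prove a dedicated statement (Lemma \ref{indephoro}, itself a consequence of Theorem \ref{magic-equivalent}) asserting $\tau(g^th^{s_0}u)/\tau(g^tu)\to1$. Your phrase ``via the fibration of Section \ref{Relahoro}'' skips this argument entirely.

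Second, in part (2) the condition $\delta>2/3$ is not about controlling an error term along cusp excursions, and no Sullivan-type logarithm law or quantitative cusp analysis is needed. Since $\alpha=0$ the basepoint $\Gamma\Psi(\vu)$ does not move with $T$, so the pointwise (non-uniform) convergence of Theorem \ref{magic-equivalent} already gives, for the fixed $u$, multiplicative $\limsup/\liminf$ bounds valid for all large $T$ (the paper's inequalities (\ref{eq: minoration3})--(\ref{eq: majoration3})); after integrating against $dT/T$ the ``error'' is automatically absorbed because the Ces\`aro average of $\tau(g^tu)$ converges to a finite limit. The real issue, which you never state, is that Birkhoff's theorem requires $\tau\in L^1(m^{ps})$ (and the finiteness of the limit constant requires the same): this is exactly the content of Theorem \ref{integrability-of-balls}, which shows integrability holds if and only if $\delta>2/3$. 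By attributing the threshold to an unproven ``delicate cusp analysis'' and admitting you have not carried it out, your proposal leaves part (2) essentially unestablished, while proposing extra machinery that the correct argument does not use. Two minor further points: the exact single-integral representation with an $o(1)$ error and $\int F_{\vu,f}\,dm=2\int f(\vv)(\vv\star\vu)^{-\delta}d\bar\mu(\vv)$ is too strong as stated, since the cutoff $\|\gamma\|\le T$ translates into an $s$-interval whose length depends on $\vv\star\vu$; the paper instead decomposes $f=\sum_if_i$ with $v^{(\vu)}(f_i)$ close to $1$ and works with sandwich bounds. And the argument of $\tau$ should be $g^{(1-\alpha)\log T+O(1)}u$ rather than $g^{(1+\alpha)\log T}u$ (the rescaling $\vu/T^\alpha$ shifts the basepoint by $g^{-2\alpha\log T}$), though this slip does not affect the limiting value.
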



 Of course,  the above formula makes sense only when  $\tau \in L^1(m^{ps})$. 
In the convex-cocompact case, $\tau$ is continuous and $m^{ps}$ has 
compact support, so this is automatic. If the surface $S$ has cusps, we prove: 
 
\begin{thm}\label{integrability-of-balls}
Let $S$ be a geometrically finite surface with cusps. 
The map $\tau$ is integrable w.r.t. $m^{ps}$ if and only if 
the critical exponent of $\Gamma$ satisfies $\delta>2/3$. 
\end{thm}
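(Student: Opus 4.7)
The plan is to combine the geometric description $\tau(u) \asymp e^{(1-\delta)d(u,K)}$ from Proposition \ref{mesure-des-boules-horospheriques} with the classical asymptotic behaviour of $m^{ps}$ on horoball shells in cusp neighbourhoods. By geometric finiteness, decompose $T^1 S = K' \sqcup \bigsqcup_{i=1}^r \mathcal{C}_i$, where $K'$ is a compact core and each $\mathcal{C}_i$ is a standard horoball-based cusp neighbourhood. On $K'$, $\tau$ is bounded and $m^{ps}(K')$ is finite, so that piece contributes a finite amount regardless of $\delta$, and the question reduces to estimating $\int_{\mathcal{C}_i} \tau \, dm^{ps}$ for each $i$.

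The two inputs needed are: $(a)$ the estimate $\tau(u) \asymp e^{(1-\delta)d(u,K)}$ recalled above; and $(b)$ the estimate, for a cusp of rank $1$ (the only possible rank on a hyperbolic surface),
\[
m^{ps}\bigl(\{u \in \mathcal{C}_i : d(u,K) \in [n, n+1]\}\bigr) \asymp e^{-(2\delta - 1)n},
\]
uniformly in $n \ge 0$. Estimate $(b)$ follows from the Sullivan shadow lemma combined with the rank-one Stratmann--Velani global measure formula $\nu_o(B(p, r)) \asymp r^{2\delta - 1}$ for the Patterson--Sullivan measure near a parabolic fixed point $p$, transferred to $m^{ps}$ via its product structure in horospherical coordinates. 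Note that $\delta > 1/2$ is automatic here, since the finiteness of $m^{ps}$ on a geometrically finite surface with cusps forces the critical exponent to exceed the parabolic one, which equals $1/2$.

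Decomposing each cusp into annular shells $\{d(u,K) \in [n, n+1]\}$ and combining $(a)$ and $(b)$ yields
\[
\int_{\mathcal{C}_i} \tau \, dm^{ps} \asymp \sum_{n \ge 0} e^{(1-\delta)n} \cdot e^{-(2\delta - 1)n} = \sum_{n \ge 0} e^{-(3\delta - 2)n},
\]
a geometric series which converges if and only if $3\delta - 2 > 0$, i.e.\ $\delta > 2/3$. Summing over the finitely many cusps and adding the finite contribution from $K'$ gives the dichotomy. The main technical obstacle is estimate $(b)$: the constants hidden in $\asymp$ must be uniform in the depth $n$, which requires a careful shadow-lemma argument for $m^{ps}$ in horospherical coordinates, tracking how the rank-one Stratmann--Velani asymptotics for the boundary measure propagate to the BMS measure on $T^1 S$. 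Once $(b)$ is in hand, the computation above is immediate and the proof is complete.
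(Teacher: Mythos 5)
Your argument is correct and reaches the right dichotomy, but it is organized differently from the paper's proof. The paper (following Dal'bo--Otal--Peign\'e) computes $\int_{T^1C_1}\tau\,dm^{ps}$ directly by unfolding over the parabolic stabilizer $\Pi_1$: it lifts the cusp to a horoball, shows via thin triangles that the time spent in the horoball is $T(v)=d(o,p.o)\pm C$ and that the depth along the excursion is $\min(t,T(v)-t)\pm C$ (Lemma \ref{distance-au-bord-horosphere}), so the fiber integral of $e^{(1-\delta)\,\mathrm{depth}}$ is $\asymp e^{\frac{1-\delta}{2}d(o,p.o)}$, and then uses conformality to get $\nu_o(p.J_0)\asymp e^{-\delta d(o,p.o)}$ and the exact hyperbolic distance formula $e^{d(o,p.o)}\asymp \kappa^2p^2$, reducing everything to the series $\sum_p e^{\frac{1-3\delta}{2}d(o,p.o)}\asymp\sum_n n^{1-3\delta}$. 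You instead reduce to the single cusp-excursion estimate $(b)$, namely $m^{ps}(\{u\in\mathcal{C}_i: d(u,K)\in[n,n+1]\})\asymp e^{-(2\delta-1)n}$, and then sum the geometric series $\sum_n e^{(1-\delta)n}e^{-(2\delta-1)n}$. That estimate is true for a rank-one cusp (with $\delta>1/2$, which indeed holds for geometrically finite groups with parabolics), and your two-line computation from it is correct, including both directions of the equivalence since Proposition \ref{mesure-des-boules-horospheriques} and $(b)$ are two-sided. The trade-off is that $(b)$, which you rightly flag as the main technical obstacle and defer to the shadow lemma and Stratmann--Velani, is essentially where all the work lies: proving it with constants uniform in $n$ amounts to the same unfolding over $\Pi_1$ that the paper performs (shadow lemma along $\Gamma K_0$, comparison of $T(v)$ with $d(o,p.o)$, and the tail estimate $\sum_{|p|\gtrsim e^{n}}|p|^{-2\delta}\asymp e^{(1-2\delta)n}$), so your proposal is best viewed as a clean and valid reduction rather than a self-contained proof. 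One small bonus of the paper's direct route is that it immediately yields the higher-dimensional variant (rank-$k$ cusps give the condition $\delta>2k/3$, as noted in the paper's remark); your shell method gives the same generalization once $(b)$ is replaced by $e^{-(2\delta-k)n}$ and $\tau\asymp e^{(k-\delta)d(\cdot,K)}$ in a rank-$k$ cusp.
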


  This result is surprising. 
Indeed, there are a lot of results proved 
under the assumption $\delta>1/2$. 
It is often for technical reasons (use of methods of harmonic analysis). 
However, at our knowledge, the condition $\delta>2/3$ never 
appeared in the litterature on the subject.

 If $\Gamma_0\backslash \H$ is a geometrically finite surface with cusps, 
then  for $m^{ps}$-almost every $u\in T^1S$, Equation (\ref{eq: resultat1}) 
does not hold anymore: the ratio between 
the left-hand side and the right-hand side is still bounded from below, 
but not from above; and if moreover, the critical exponent satisfies $\delta\leq 2/3$, 
then the same thing happen to Equation (\ref{eq: logcesaro}).
 
\begin{rem}\rm In fact, with exactly the same proof, we give in theorem \ref{Gibbs} a version of 
theorem \ref{almost-sure-cesaro} describing the behaviour of $\bar{\mu}^\varphi$-almost every
$\vu\in\R^2\setminus\{0\}$, where $\bar{\mu}^\varphi$ is the measure on $\R^2\setminus\{0\}$ 
induced by the Gibbs measure $m^\varphi$ on $T^1S$ associated with a H\"older potential $\varphi:T^1S\to \R$. 
\end{rem}


 \subsection{Large scale picture of the cloud $\Gamma_T\vu$}
 We generalize \cite[Cor 1.2]{mau} to this setup, by describing the picture
 which correspond to the case of scaling parameter $\alpha=1$. 

Observe that for large $T$, the set $\{\gamma\in\Gamma, \|\gamma\|\le T,\gamma.\vu\in Supp(f)\}$ 
is very small (comparable to $T^\delta$) compared to $|\Gamma_T|$, which is equivalent to $cT^{2\delta}$. 
Thus, we are interested here in rescaling the orbit $\{\gamma.\vu, \, \|\gamma\| \leq T \}$ in such a way that we can observe all the points.

 In this case, there is no need of assuming that the initial vector  $\vu$ lies in $\mathcal{C}(\Gamma_0)$. 
Indeed, theorem \ref{largescale} relies on an equidistribution result of horocycles pushed by the geodesic
flow (theorem \ref{magic-equivalent-flowed}), 
due to Thomas Roblin, in the spirit of a theorem of Sarnak \cite{Sar}, which is stated in \S \ref{3}. 
And this "flowed equidistribution result" is valid for all vectors $u\in T^1S$.

Introduce first some notations. 
Let $\Psi:\R^2\setminus\{0\} \to PSL(2,\R)$ be the natural section 
(independent of the chosen norm on $M(2,\R)$) 
which associates 
to a vector $\vu=k.a.(1,0)\in\R^2$ the element $ka\in PSL(2,\R)$, 
when we write $\PSL(2,\R)=KAN$ (see \S \ref{defpsi} for details). 
Define the following quantity when $\PSL(2,\R)$ is endowed with the $l^2$-norm:
 $$
\Theta(\vu,\vv)=\frac{1}{|\vu|.|\vv|}\sqrt{1-\frac{|\vv|^2}{|\vu|^2}}.
$$
and $\Theta^m(\vu,\vv)$ is a map here defined to be $0$. In the case of another norm $\|.\|$ on matrices, the expression of $\Theta(\vu,\vv)$ and $\Theta^m(\vu,\vv)$ are different.   
Define $\mathcal{D}_0(\vu)=D(0,|\vu|)$ in the case of the $l^2$-norm, 
and $\mathcal{D}_0(\vu)=\{\vv\in\R^2,\Theta(\vu,\vv)>0\}\cup\{0\}$ for other norms.  

  \begin{thm} \label{largescale} Endow $M(2,\R)$ with the $l^2$-norm, or with a strictly convex norm. 
Let $\Gamma_0$ be  a finitely generated, nonelementary subgroup of $\SL(2,\R)$, 
 containing $-I$ as the unique element of torsion. 
For all  $\vu \in \R^2\setminus \{0\}$ and all 
continuous functions $f:\R^2\to \R$, we have  
\begin{align*}
   \lim_{T\rightarrow +\infty}\frac{1}{T^{2\delta}} & \sum_{\gamma \in \Gamma_T} f\left(\frac{\gamma \vu}{T} \right) =\\
  & \frac{2}{m^{ps}(T^1S)} 
\int_{\mathcal{D}_0(\vu)} \Theta( \vu , \vv)^\delta \tau(g^{\log \Theta(\vu,\vv)}h^{-\Theta^m(\vu,\vv)}\Psi(\vu)) f(\vv)d\bar{\mu}(\vv),
   \end{align*}
 and the right-hand side is a finite integral, whose total mass does not depend of $\vu$. 

The map $\Theta^m$ equals $0$ in the case of the $l^2$-norm, and is defined in section \ref{applications-Theta} for other norms. 
  \end{thm}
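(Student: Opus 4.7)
The plan is to convert the large-scale counting into an application of flowed horocycle equidistribution (Theorem \ref{magic-equivalent-flowed}) via the Iwasawa decomposition. Write $\PSL(2,\R)=KAN$ with $a_t=\operatorname{diag}(e^{t/2},e^{-t/2})$ and $n_x=\bigl(\begin{smallmatrix}1&x\\0&1\end{smallmatrix}\bigr)$, and for each $\gamma\in\Gamma_0$ decompose $\gamma\Psi(\vu)=k_\gamma a_{t_\gamma}n_{x_\gamma}$. Since $N e_1=e_1$ and $\Psi(\vu)e_1=\vu$, one has $\gamma\vu=e^{t_\gamma/2}k_\gamma e_1$, so a target $\vv=\gamma\vu/T$ uniquely specifies the direction $k_\gamma e_1=\vv/|\vv|$ and the $A$-time $t_\gamma=2\log(T|\vv|)$, leaving $x_\gamma$ as the only free parameter.

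The next step is to translate the constraint $\|\gamma\|\le T$ into a condition on $x_\gamma$. Writing $\Psi(\vu)=k_\vu a_\alpha$ with $\alpha=2\log|\vu|$ and using $n_x a_{-\alpha}=a_{-\alpha}n_{xe^\alpha}$, one finds $\gamma=k_\gamma a_{t_\gamma-\alpha}n_{x_\gamma e^\alpha}k_\vu^{-1}$; for a $K$-invariant norm this gives $\|\gamma\|=\|a_{t_\gamma-\alpha}n_{x_\gamma e^\alpha}\|$. Substituting $e^{t_\gamma}=T^2|\vv|^2$ and $e^\alpha=|\vu|^2$ shows that asymptotically $\|\gamma\|\le T$ reduces to the symmetric condition $|x_\gamma|\le\Theta(\vu,\vv)$, and in particular $\vv$ must lie in $\mathcal{D}_0(\vu)=\{|\vv|<|\vu|\}$. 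For a general strictly convex norm the unit ball still carves out a smooth interval on each $n_x$-line, but one that is asymmetric; the resulting interval $[\Theta^m(\vu,\vv)-\Theta(\vu,\vv),\,\Theta^m(\vu,\vv)+\Theta(\vu,\vv)]$ defines the correction term $\Theta^m(\vu,\vv)$ of Section \ref{applications-Theta}.

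We are thus reduced to counting the orbit points $\gamma\Psi(\vu)$ whose $KAN$-coordinates lie in a window of prescribed direction and time, with $x_\gamma$ in an interval of length $2\Theta(\vu,\vv)$ centred at $\Theta^m(\vu,\vv)$. Translating via the commutation between $A$ and $N$, such a window is the geodesic push, by time of order $2\log T$, of a bounded-length horocycle segment based at $u_\vv:=g^{\log\Theta(\vu,\vv)}h^{-\Theta^m(\vu,\vv)}\Psi(\vu)\in T^1S$. Applying Theorem \ref{magic-equivalent-flowed} at this base point yields, uniformly for $\vv_0$ in compact subsets of $\mathcal{D}_0(\vu)$ and for small neighbourhoods $V$ of $\vv_0$,
\[
\#\{\gamma\in\Gamma_T:\gamma\vu/T\in V\}\,\sim\,T^{2\delta}\,\frac{2\,\Theta(\vu,\vv_0)^\delta\,\tau(u_{\vv_0})}{m^{ps}(T^1S)}\,\bar\mu(V),
\]
as $T\to\infty$: the factor $\Theta^\delta\tau$ comes from Roblin's formula, the first $T^\delta$ from the horocycle stretching by the geodesic, and the second $T^\delta$ from the radial weight $r^{2\delta-1}$ in $d\bar\mu$ under the scaling $\vv\mapsto\vv/T$.

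Integrating these local asymptotics against $f$ produces the integral formula of the theorem. Finiteness of the right-hand side, uniform in $\vu$, follows from the estimate $\tau(u)\asymp e^{(1-\delta)d(u,K)}$ of Proposition \ref{mesure-des-boules-horospheriques} combined with the weight $r^{2\delta-1}$ of $d\bar\mu$. The main technical obstacle is establishing the equidistribution uniformly as $\vv$ approaches the boundary of $\mathcal{D}_0(\vu)$ (where $\Theta\to 0$ and the horocycle segment degenerates) and as $\vv\to 0$ (where $u_\vv$ may escape into a cusp and $\tau$ blows up); both regimes require the quantitative control of $\tau$ established earlier, and the strict convexity of the norm is used precisely to guarantee the smooth dependence of the $x_\gamma$-windows on the target $\vv$.
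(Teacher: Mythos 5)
Your reduction is essentially the paper's own mechanism: your $KAN$-coordinate $x_\gamma$ is exactly the cocycle $c_{\vu}(\gamma)$ defined by $\gamma\Psi(\vu)=\Psi(\gamma\vu)\,n_{c_\vu(\gamma)}$, the condition $\|\gamma\|\le T$ becomes $T^{-2}c_\vu(\gamma)\in[\Theta^m(\vu,\vv)-\Theta(\vu,\vv),\Theta^m(\vu,\vv)+\Theta(\vu,\vv)]$ up to an $e^{\pm\sigma}$ factor (Lemma \ref{relation-horocycle-orbit}), and the limit is then extracted from Roblin's flowed equidistribution (Theorem \ref{magic-equivalent-flowed}) at the base point $g^{\log\Theta(\vu,\vv)}h^{-\Theta^m(\vu,\vv)}\Psi(\vu)$. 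Two caveats on this part: your identity $\|\gamma\|=\|a_{t_\gamma-\alpha}n_{x_\gamma e^{\alpha}}\|$ uses bi-$K$-invariance of the norm and so is only correct for the $l^2$-norm; for a general strictly convex norm one must keep the rotations and work with $\|\gamma\|=e^{\pm\sigma}\,T\,\kappa(\vu,\vv,T^{-2}c)$ as in the paper. Also, the uniformity you claim ``for $\vv_0$ in compact subsets of $\mathcal{D}_0(\vu)$'' is not available: Theorem \ref{magic-equivalent-flowed} is pointwise in the base vector, with no rate, and the base point varies with $\vv$. What your argument genuinely proves is the theorem for $f$ continuous with compact support in $\mathcal{D}(\vu)\subset\R^2\setminus\{0\}$.

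The genuine gap is the extension to arbitrary continuous $f:\R^2\to\R$, in particular with $f(0)\neq 0$, i.e.\ the absence of escape of mass of the measures $\frac{1}{T^{2\delta}}\sum_{\gamma\in\Gamma_T}\delta_{\gamma\vu/T}$ towards $0$. Your appeal to ``quantitative control of $\tau$'' does not address this: as $\vv\to 0$ one has $\Theta(\vu,\vv)\to\infty$, no continuous function supported in $\mathcal{D}(\vu)$ can dominate the indicator of a neighbourhood of $0$, and nothing in your argument bounds $\#\{\gamma\in\Gamma_T:\ |\gamma\vu|\le\epsilon T\}$ uniformly in $T$. The paper closes this with a mass-conservation argument that you never set up: for the $l^2$-norm one computes exactly $\int_{\mathcal{D}(\vu)}\Theta(\vu,\vv)^\delta\,\tau(g^{\log\Theta(\vu,\vv)}\Psi(\vu))\,d\bar{\mu}(\vv)=1/\delta$ (Lemma \ref{probability}) and uses Roblin's orbital counting $|\Gamma_T|\sim 2T^{2\delta}/(\delta\,m^{ps}(T^1S))$ (Lemma \ref{cardinal-orbit}); hence any weak limit of the normalized counting probabilities agrees on $\R^2\setminus\{0\}$ with a limit measure of total mass $1$, so it gives no mass to $\{0\}$, and for a general strictly convex norm the non-accumulation at $0$ is transferred from the $l^2$-case via $\Gamma_T\subset\Gamma^{l^2}_{cT}$. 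Likewise, the thin region around $\partial\mathcal{D}_0(\vu)$ (where points $\gamma\vu/T$ with $\|\gamma\|\le T$ may land slightly outside $\mathcal{D}(\vu)$) is controlled in the paper not by $\tau$-estimates but by applying the already-proved case to the doubled vector $2\vu$, whose disk contains that region, together with $\bar{\mu}(\partial\mathcal{D}_0(\vu))=0$. Without these two steps the statement for general continuous $f$ (and the $\vu$-independence of the total mass, which is deduced from the theorem applied to $f\equiv 1$) remains unproved.
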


Geometrically, we can rewrite the limit as  
$$
\frac{1}{m^{ps}(T^1S)} \int_{\mathcal{D}_0(\vu)} \mu_{H^-(\Psi(\vu))}\left((h^s \Psi(u)))_{|s+\Theta^m(\vu,\vv)|\le\Theta(\vu,\vv)})\right)\,f(\vv)\,d\bar{\mu}(\vv),
$$
  where $\mu_{H^-}$ is the conditional measure on the strong stable horocyclic foliation
 of $m^{ps}$ (see \S \ref{2}). 
It is remarkable  that the total mass of this integral does not depend on $\vu$, 
whereas the quantity 
$$
\mu_{H^-(\Psi(\vu))}((h^s \Psi(u))_{|s+\Theta^m(\vu,\vv)|\le\Theta(\vu,\vv)}))
$$ does. \\

The assumption that the norm is strictly convex is probably not necessary,
 but guarantees the continuity of $\Theta$.\\

\subsection{Higher dimension and variable curvature}

The extension of the results stated above in higher dimension and/or variable negative curvature is a natural question. 
Let us mention for example articles of Oh-Shah \cite{OS} in higher dimensions,  or Kim \cite{Kim} in complex hyperbolic spaces, where  they study counting results
for  discrete linear orbits. \\

First, observe that the geometric statements on which our distribution results rely extend 
to higher dimension.    
Indeed, we shall prove a higher dimensional version 
(theorem \ref{magic-equivalent-higher-dim}) of
theorem \ref{magic-equivalent}. 

The extension of this theorem in variable negative curvature would work, but in variable negative curvature, 
the amenability of horospherical balls is a problem, and an analogous statement would hold only for 
certain good F\o lner sequences.

For  theorem \ref{integrability-of-balls}, in higher dimension, the same proof would lead to the condition $\delta>2k/3$, 
where $k$ is the maximal rank of the parabolic subgroups of $\Gamma$, whereas
the usual assumption coming from harmonic analysis is $\delta>(n-1)/2$, 
where $n$ is the dimension of the manifold.  
In variable negative curvature, the same proof leads to a similar condition involving the maximal critical exponent of parabolic subgroups of $\Gamma$, 
which is not necessarily equal to $k/2$. \\

However, we decided not to try to 
extend the study of the distribution of nondiscrete orbits of finitely generated groups 
acting linearly on certain linear spaces (higher dimensional, or complex hyperbolic, ...) 
because it would imply a too high technicality of the statements, with a priori the same ideas. \\

The article is organized as follows. Section \ref{2} is devoted to preliminaries on hyperbolic geometry, we prove our 
equidistribution results in section \ref{3}, theorem \ref{th:convexcocompact} and proposition \ref{theresnolimit} 
in section \ref{Relahoro}, geometrically finite surfaces and theorem \ref{almost-sure-cesaro} are studied in section 
\ref{geometrically-finite-groups}, and   theorem \ref{largescale} is proved in the last section.


\section{Preliminaries}\label{2}


\subsection{Action of $\PSL(2,\R)$ on the hyperbolic $2$-dimensional space}

The hyperbolic   upper half plane $\H=\R\times (0,+\infty)$ is
endowed with the hyperbolic metric 
$\frac{dx^2+dy^2}{y}$. 
The group  of isometries preserving orientation of $\H$ 
identifies with $G=\PSL(2,\R)$ acting
by homographies on $\H=\R\times \R_+^*$. An isometry of $G$ 
acts also on $T\H$ and $T^1\H$ via its differential. Moreover, the group $G$ 
acts simply transitively on the unit tangent bundle $T^1\H$, 
so that we identify these two spaces through the  map which sends the unit
vector $(0,1)$ tangent to $\H$ at the origin $o=(0,1)=i$ on the identity element of $G$. 
Let $d$ denote the hyperbolic distance on $\H$, 
and $\partial \H=\R\cup\{\infty\}$ be the boundary at infinity of $\H$. 

The  {\em Busemann cocycle } is the continuous map defined on $\partial \H\times\H^2$ by
$$
\beta_{\xi}(x,y):=\lim_{z\to\xi}\left(d(x,z)-d(y,z)\right)\,.
$$
Define the map
$\displaystyle
v\in T^1\H\mapsto (v^-,v^+,\beta_{v^+}(o,\pi(v))
\in (\partial \H\times \partial \H)\setminus \mbox{Diagonal}\times\R\,,$
where $v^{\pm}$  are the endpoints in 
$\partial \H$ of the geodesic defined by $v$, 
and  $\pi(v)\in\H$ is the basepoint in $S$ of  $v$. 
It defines a homeomorphism between $T^1\H$ and 
$\partial^2\H\times\R:=(\partial \H\times \partial \H)\setminus\mbox{Diagonal}\times\R$, 
and we shall identify 
these two spaces in the sequel. 
An isometry $\gamma \in \PSL(2,\R)$ acts on 
$(\partial \H\times \partial \H)\setminus\mbox{Diagonal}\times\R$ by
$$
\gamma.(v^-,v^+,t)=(\gamma.v^-,\gamma.v^+,t+\beta_{v^+}(\gamma^{-1}.o,o))\,.
$$

Let $\Gamma$ be a discrete subgroup of $G$, without elliptic elements.  
Its {\em limit set} $\Lambda$ is the set  
$\Lambda =\overline{\Gamma.o}\setminus \Gamma.o \subset \partial \H$ 
of the orbit of $\Gamma.o$ in $S^1$ for the usual topology. 
It is also the smallest closed $\Gamma$-invariant subset of $\partial \H$.
 The group $\Gamma$ acts properly discontinuously on the {\em
  ordinary set} $\partial \H\setminus\Lambda$, 
which is a countable union of intervals.

A point $\xi\in\Lambda $ is a  {\em radial} limit point 
 if it is the  limit of a sequence
$(\gamma_n .o)$ of points of $\Gamma . o$ that stay at bounded 
hyperbolic distance 
of the geodesic ray $[o\xi)$ joining $o$ 
to $\xi$.
Let $\Lambda_{\rm rad}$ denote the {\em radial limit set}.

A {\em horocycle} of $\H$ is a euclidean circle tangent to $\partial \H$.
 It can also be defined as a level set of a Busemann function.  
A {\em horoball} is the (euclidean) disc bounded by a horocycle.

An element of $G$ is
 {\em parabolic} if it fixes exactly one point of $S^1$.
Let  $\Lambda_{\rm p}\subset \Lambda$ denote the set of {\em parabolic} 
limit points, that is the points of $\Lambda$ fixed by a parabolic isometry of
 $\Gamma$.

Any hyperbolic surface is the quotient  $S=\Gamma\backslash \H$ of $\H$ by a
 discrete subgroup $\Gamma$ of $G$ without elliptic elements, and its
 unit tangent bundle  $T^1S$ identifies with $\Gamma\backslash G$.

In this article, we always assume $\Gamma$ to be {\em without elliptic elements} and {\em nonelementary},
 that is $\#\Lambda=+\infty$. 
Moreover, we are interested in {\em geometrically finite surfaces} $S$,
 i.e. surfaces whose fundamental group  $\Gamma$ is finitely generated. 
In such cases, the limit set $\Lambda$ is the disjoint union of $\Lambda_{\rm rad}$ 
and $\Lambda_{\rm p}$  \cite{bowditch}.
Moreover, the surface is a disjoint union of a compact part $C_0$, 
finitely many cusps (isometric to $\{z\in\H,\,\, Im\, z\ge cst\}/\{z\mapsto z+1\}$), 
and finitely many 'funnels' 
(isometric to $\{z\in\H,\,\, Re(z)\ge 0,\, 1\le |z|\le a\}/\{z\mapsto az\}=
\{z\in\H,\,\, Re(z)\ge 0\}/\{z\mapsto az\}$, for some $a>1$. 

When $S$ is compact, $\Lambda=\Lambda_{\rm rad}=\partial\H$. 
It is said {\em convex-cocompact}  when it is a geometrically finite surface without cusps. 
In this case, $\Lambda=\Lambda_{rad}$ is strictly included in $\partial \H$
 and $\Gamma$ acts cocompactly 
on  the set 
$(\Lambda\times \Lambda)\setminus \mbox{Diagonal}\times\R\subset T^1\H$. 
When $S$  has finite volume, there are no funnels and 
$\Lambda=\Lambda_{\rm rad}\sqcup\Lambda_p=\partial \H$.


\subsection{Geodesic and horocycle flows}

A hyperbolic geodesic in $\H$ is a vertical line or a half-circle orthogonal to $\partial \H$. 
A vector $v\in T^1\H$ is tangent to a unique geodesic of $\H$. 
Moreover, it is orthogonal to exactly two horocycles passing through its basepoint $\pi(v)$, 
and tangent to $\partial \H$ respectively at $v^+$ and $v^-$. 
The set of vectors $w\in T^1 \H$ such that $w^+=v^+$ and 
based on the same horocycle tangent to $\partial \H$ at $v^+$ 
is the {\em strong stable horocycle} or strong stable manifold 
$W^{ss}(v)\subset T^1 \H$ of $v$. 
The  {\em strong unstable manifold} $W^{su}(v)$ is defined in the same way. 

The {\em geodesic flow}  $(g^t)_{t\in\R}$  acts on $T^1\H$ by moving
 a vector $v$ of a distance $t$ along its geodesic. 
In the identification of
 $T^1\H$ with $\PSL(2,\R)$, this flow corresponds to the right action by
 the one-parameter subgroup 
 $$
 \left\{a_t:=\left(\begin{array}{cc}
e^{t/2} & 0 \\
0 & e^{-t/2} \\
\end{array} \right),\,t\in\R
\right\}.
$$

The {\em strong stable horocyclic flow}  $(h^s)_{s\in\R}$ acts on $T^1\D$ by moving a vector
  $v$ of a distance $|s|$ 
along its strong stable horocycle. There are two possible orientations for this
  flow, 
and we consider the choice corresponding to the right action on $G$
by the one parameter subgroup
 $$ 
\left\{n_s:=\left(\begin{array}{cc}
 1& s \\
0 & 1 \\
\end{array} \right),\,s\in\R
\right\}.$$

For all $s\in\R$ and all $t\in\R$, geodesic and horocyclic flows satisfies
\begin{equation}\label{relationfondamentale}
 g^t\circ h^s=h^{se^{-t}}\circ g^t\,.
 \end{equation}

These two right-actions are well defined on the quotient space 
$T^1 S\simeq \Gamma\backslash G$.

An horocycle of $\H$ or $T^1\H$ is determined by its basepoint 
$\xi\in\partial \H$ and a real parameter, 
the algebraic distance $t=\beta_\xi(o,x)$ between the origin $o$ and the horocycle, where $x$ is any point on $H$. 

The set $\mathcal{H}$ of all horocycles identifies therefore with 
$\partial \H\times \R$, and the 
group $\PSL(2,\R)$ acts naturally on it by 
$\gamma.(\xi,t)=(\gamma.\xi,t+\beta_\xi(\gamma^{-1}.o,o)$.
We refer to section \S \ref{Relahoro} for an explicit 
identification of $\mathcal{H}=\partial \H\times \R$ with 
$\R^2\setminus\{0\}/\pm$, where the action by isometries of $\PSL(2,\R)$ 
on $\mathcal{H}$ corresponds to the linear action of $\PSL(2,\R)$ on $\R^2\setminus\{0\}/\pm$. 
The {\em non-wandering set for the horocyclic flow} $\mathcal{E}$ is 
the set $\mathcal{E}=\Gamma\backslash ((\partial \H \times \Lambda)\setminus Diagonal \times \R)$. 
It is known \cite{Dalbo} that a vector $u \in \mathcal{E}$ 
is either periodic, or its orbit under the horocyclic flow is dense in $\mathcal{E}$.


\subsection{The Patterson-Sullivan construction}

Let $\delta$ be the critical exponent of $\Gamma$, defined by
 $$
 \delta:=\limsup_{T\to\infty}\frac{1}{T}\log\#\{\gamma\in \Gamma, d(o,\Gamma.o)\le T\}\,. 
 $$
The well known Patterson construction provides a {\em conformal density}
 of exponent $\delta$ on $\partial \H$, 
that is a collection $(\nu_x)_{x\in\H}$ of  measures, 
supported on $\Lambda\subset \partial \H$, s.t. $\nu_o(\partial \H)=1$,
 $\gamma_*\nu_x=\nu_{\gamma.x}$ for all $\gamma\in\Gamma$, 
and 
$$
\frac{d\nu_x}{d\nu_y}(\xi)=e^{-\delta \beta_{\xi}(x,y)}.
$$ 

The Bowen-Margulis or Patterson-Sullivan measure $m^{ps}$ on  $T^1 S$  
is defined locally as the product 
$$
dm^{ps}(v)=
\exp\left(\delta\beta_{v^-}(o,\pi(v))+\delta\beta_{v^+}(o,\pi(v))\right)
d\nu_o(v^-)d\nu_o(v^+)dt
$$
in the coordinates 
$\Omega\simeq \Gamma\backslash (\Lambda_{\Gamma}^2\setminus{\rm Diagonal} \times\R)$.

Under our assumptions on $S$, it is well known \cite{Sullivan} 
that the Bowen-Margulis measure is $(g^t)$-invariant,  finite and ergodic,
that there exists a unique conformal density of exponent $\delta$,
 that all measures $\nu_x$ are nonatomic and give full measure to the radial limit set. 
Moreover, the Bowen-Margulis-Patterson-Sullivan measure is the measure 
of maximal entropy of the geodesic flow,
 and it is fully supported on the nonwandering set $\Omega$ of the geodesic flow.
Note that in general, this measure is {\em not} invariant under the horocyclic flow, except on 
finite volume surfaces, where $\Lambda=\partial \H$, $\Omega=\mathcal{E}=T^1S$, $\delta=1$
and $\nu_o$ is the Lebesgue visual measure on $\partial \H$. 

The Patterson-Sullivan measure induces a 
$\Gamma$-invariant measure on the space of horocycles, 
defined by
$d\hat{\mu}(\xi,t)=\exp(\delta t)d\nu_o(\xi)dt$. 
Roblin \cite{Roblin} proved that it is the only $\Gamma$-invariant, ergodic measure 
with full support 
in $\Lambda\times \R\subset\mathcal{H}$. 


\subsection{Foliations and the Burger-Roblin measure}

The orbits of the horocyclic flow on $T^1S$ (resp. $T^1\H$) form a one-dimensional foliation 
$\mathcal{W}^{ss}$ of $T^1S$ (resp. of $T^1\H$). We denote by $H^-$, 
or $H^-(u)=\{h^su,\,s\in\R\}$ a leaf of any of these two foliations. 
Given a chart of the foliation (or flow box) $\varphi:B\to \R^2\times\R$, we write $B=T\times P$, 
where $T=\varphi^{-1}(\R^2\times\{t\})$ is a transversal, and $P=\varphi^{-1}(\{x\}\times \R)$ is a
plaque. 
On $T^1\H$, the set $\mathcal{H}=\partial\H\times\R$ of horocycles provides a natural global transversal 
to $\mathcal{W}^{ss}$. 

The conditional measures of $m^{ps}$ on stable horocycles are defined by 
$$
d\mu_{H^-(v)}(v)=e^{\delta\beta_{v^-}(o,\pi(v)}\,d\nu_o(v^-),
$$
(the formula  is independant of the choice of $o$), so that locally, if $f:T^1\H\to \R$ is 
continuous with compact support, one has 
$\int_{T^1\H} f\,d\tilde{m}^{ps}=\int_{\mathcal{H}}\left(\int_{H^-(v)} f d\mu_{H^-(v)}\right) d\hat{\mu}(v^+,t)$.

The measures $(\mu_{H^-(v)})_{v\in T^1\H}$ are well defined on the quotient on $T^1S$, but on 
$T^1S$, there is no global transversal to the foliation. 
One has to consider transverse invariant measures, that is a collection $\nu=\{\nu_T\}$ of 
measures on all transversals $T$, invariant by the holonomies, that is homeomorphisms $\zeta:T\to T'$ 
which follow the leafs 
between two transversals of a same box. 

The unique $\Gamma$-invariant, ergodic measure $\hat{\mu}$ of full support on $\Lambda\times \R\subset \mathcal{H}$ 
 induces on the quotient a transverse invariant measure $\nu=\{\nu_T\}$, which is the unique (up to normalization)
transverse invariant measure with full support in the nonwandering set $\mathcal{E}$ of the horocyclic flow. 
Locally, in a box
$B=T\times P$, $\int_B f dm^{ps}=\int_T\left( \int_{\{t\}\times P} fd\mu_{H^{-}(v)} \right) d\nu_T$. 

Denote also by $(\lambda_{H^-(v)})$ the collection of Lebesgue measures on all horocycles associated 
with the parametrization of the horocyclic flow. 

Introduce the measure $m$ defined locally, for $f:B=T\times P\to \R$ continuous with compact support,
 as the (noncommutative) product
$$
\int_B f \, dm = \int_T \left(\int_{\{t\}\times P} f \,d\lambda_{H^-} \right) d\nu_T\,.
$$
One can now reformulate Roblin's result as follows. 
On geometrically finite surfaces, except the  probability measures supported on  periodic horocycles, 
and the infinite measures supported on  wandering horocycles, 
{\em the measure $m$  is the unique (up to normalization)
 ergodic invariant measure fully supported in the
nonwandering set} 
$
{\mathcal
  E}\simeq \Gamma\backslash \left((\Lambda\times S^1)
\setminus\mbox{Diagonal}\times \R\right)$ of $(h^s)_{s\in\R}$. 
It is an infinite locally finite measure. 

Its lift to $T^1\H$, still denoted by $m$, can be understood 
  in the $\mathcal{H}\times \R$ decomposition as follows.
 If $f:T^1\H\to \R$ is continuous with compact support, 
$$
\int_{T^1\H}f\,dm=\int_{\mathcal{H}} \left(\int_{H^{-}(v)}f d\lambda_{H^-}\right)d\hat{\mu}(v^+,t) .
$$

Let us mention that the two families 
$(\lambda_{H^-(v)})$ and $(\mu_{H^-(v)})$ vary
continuously when $v$ moves transversally to the leaves: 
for all boxes $B\subset T^1S$ and 
continuous maps  $f:B\to \R$ with compact support, 
the two following maps are continuous 
$$
t\in T\mapsto \int_{\{t\}\times P} f(v)\,d\mu_{H^-(v)}\quad\mbox{and}\quad
t\in T\mapsto \int_{\{t\}\times P} f(v)\,d\lambda_{H^-(v)}
$$

By construction, the measure $m$ is quasi-invariant under the action of the geodesic 
flow, and more precisely: 
$$
\frac{dg^t_*m}{dm}(v)=e^{(1-\delta)t}\,,
\quad\mbox{ for all }t\in\R\quad\mbox{ and }v\in\mathcal{E}\,.
$$




\section{Equidistribution of horocycles}\label{3}

Define $\displaystyle \tau(u)=\mu_{H^-}((h^s u)_{|s|\le 1}).$
Using the definition of $\mu_{H^-}$, one gets easily the useful relation 
$$
\mu_{H^-}((h^s u)_{|s|\le R})=R^\delta\tau(g^{\log R} u)\,.
$$


\subsection{Higher dimensional equidistribution}

The formalism of foliations allows to work with higher dimensional manifolds. 
We will introduce some additional notations, and prove theorem \ref{magic-equivalent}
and a higher-dimensional analoguous statement together. 

The hyperbolic space $\H^n$, $n\ge 2$, identifies with $\R^{n-1}\times \R_+^*$, with 
the hyperbolic metric $\frac{dx_1\dots dx_n}{x_n}$. 
A horosphere is a horizontal hyperplane or a sphere tangent to $\R^{n-1}\times \{0\}$. 
The space $\mathcal{H}$ of horospheres still identifies 
with $\partial \H^n\times \R$, where $\partial \H^n=\R^{n-1}\cup\{\infty\}\simeq S^{n-1}$. 

If $\Gamma$ is a discrete nonelementary group of isometries of $\H^n$, let $M=\Gamma\backslash \H^n$, and 
$T^1M=\Gamma\backslash \PSL(2,\R)$. 

We endow all  horocycles $H=(\xi,t)$ with the distance
induced by the induced riemannian metric on the horosphere. 
It corresponds to the classical euclidean distance on the 
horizontal horosphere $H(\infty,0)$ at euclidean height $1$. 
This distance lifts to a distance $d_{H^-}$ on all strong stable horospheres, which satisfies 
$ d_{H^-}(g^t v,g^t w)=e^{-t}d_{H^-}(v,w)$ for all $v,w$ on a same horosphere, and $t\in\R$.
We denote by $B_{H^-}(u,r)$ the ball of radius $r$ for this distance. 
We still have 
$$
\tau(u)=\mu_{H^-}(B_{H^-}(u,1))\quad\mbox{and}\quad \tau(g^t u)=e^{-\delta t}\mu_{H^-}(B_{H^-}(u,e^t))\,.
$$

Denote by $(\lambda_{H^-})$ the family of Lebesgue measures on strong 
stable horospheres of $T^1M$ or $T^1\H^n$,
 normalized  in such a way that on the horizontal horosphere 
of the unit vector $(0,\dots, 0,1)$ with base point $(0,\dots, 0,1)$,
 it coincides with the usual Lebesgue measure, and that it satisfies 
$g^t_*\lambda_{H^-(v)}= e^{t} \lambda_{H^-(g^t v)}$. 

We still denote by $m$ the measure obtained locally
as the (noncommutative) product of $\hat{\mu}$ and $(\lambda_{H^-})$ on 
$T^1M$, or equivalently as the noncommutative product of the unique
 transverse measure $\nu=(\nu_T)$ of full support 
in $\mathcal{E}$ with $(\lambda_{H^-})$, on $T^1M$.

With these notations, we have 

\begin{thm}\label{magic-equivalent-higher-dim} 
Let $\Gamma$ be a discrete nonelementary convex-cocompact group 
of isometries of $\H^n$ without elliptic elements, and $M=\Gamma \backslash\H^n$. 
Let $u\in T^1M$ and $r_n\to +\infty$ be a sequence such that 
 the Lebesgue measures on the leaves satisfy the following amenability condition:
there exist a family of  boxes $B\subset T^1M$ covering $\Omega$, s.t. $m^{ps}(B)>0$, and s.t. for some $r_0=r_0(B)>0$, 
we have 
\begin{equation}\label{amenability-lebesgue}
\frac{\lambda_{H^-}(B\cap \left(B_{H^-}(u,r_n+r_0)\setminus B_{H^-}(u,r_n-r_0)\right))}{\lambda_{H^-}(B\cap B_{H^-}(u,r_n))}\to 0\quad\mbox{when}\quad r_n\to +\infty\,.
\end{equation}
Then, for all $f:T^1M\to \R$ continuous with compact support, we have:
$$
\lim_{r_n\to +\infty}\frac{1}{r_n^\delta \tau(g^{\log r_n}u)}\int_{B_{H^-}(u,r_n)}f \,d\lambda_{H^-(u)}=
\frac{1}{m^{ps}(T^1M)}\int_{T^1M} f\,dm\,.
$$
\end{thm}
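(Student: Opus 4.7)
The plan is to adapt the scheme of \cite{Scha2} (which establishes the analogous statement in dimension two; compare theorem \ref{magic-equivalent}), combining mixing of the geodesic flow for $m^{ps}$ with a thickening-and-unfolding argument. The F\o lner hypothesis~(\ref{amenability-lebesgue}) replaces the trivial fact that in dimension two intervals are automatically F\o lner, and is used only to control the horospherical boundary error introduced by the thickening.

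Concretely, I would first use a partition of unity to reduce to $f$ supported in a small flow box. Fix a small transverse neighborhood $V\subset \mathcal{H}$ of the horosphere through $u$ with $\hat\mu(V)>0$, and form the tube
\[
\mathcal{T}_n := \{h^s v : v\in V,\ s\in B_{H^-}(v, r_n)\}.
\]
The product structure of $m$ in a flow box yields
\[
\int f\,\chi_{\mathcal{T}_n}\,dm = \int_V \Big(\int_{B_{H^-}(v, r_n)} f(h^s v)\,d\lambda_{H^-(v)}(s)\Big)\,d\hat\mu(v),
\]
and the F\o lner assumption together with uniform continuity of $f$ makes this equal to $\hat\mu(V)\cdot \int_{B_{H^-}(u, r_n)}f\,d\lambda_{H^-(u)}\cdot(1+o(1))$ as $V$ shrinks to the horocycle of $u$. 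A parallel computation with $\mu_{H^-}$ in place of $\lambda_{H^-}$ gives $m^{ps}(\mathcal{T}_n)\sim \hat\mu(V)\,r_n^\delta\tau(g^{\log r_n}u)$, using the continuous transverse variation of the Patterson-Sullivan conditionals. Next, the geodesic flow $g^{\log r_n}$ contracts leaves by $1/r_n$, so $\mathcal{T}'_n := g^{\log r_n}\mathcal{T}_n$ is a unit-leaf-size tube around $u_n := g^{\log r_n}u$, with enlarged transversal $g^{\log r_n}V$. Using $g^t$-invariance of $m^{ps}$ to unfold $\mathcal T_n$ onto $\mathcal T'_n$, and the quasi-invariance $g^t_*m = e^{(1-\delta)t}m$ to track the Lebesgue-to-PS comparison on the short tube, the target ratio becomes, up to $o(1)$,
\[
\frac{1}{m^{ps}(\mathcal{T}_n)}\int (f\circ g^{-\log r_n})\,\chi_{\mathcal{T}'_n}\,dm^{ps}.
\]
Mixing of the geodesic flow for $m^{ps}$ on convex-cocompact manifolds (Roblin~\cite{Roblin}) then drives this quantity to $\frac{1}{m^{ps}(T^1M)}\int f\,dm$, the conversion of $\int f\,dm^{ps}$ into $\int f\,dm$ coming precisely from the Radon-Nikodym factor $r_n^{\delta-1}$ that accumulates on the shrinking short tube.

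The main obstacle is twofold. First, the thickening introduces a horospherical boundary error: translating the center of $B_{H^-}(u, r_n)$ by a bounded amount (which is what occurs as $v$ varies over $V$) produces a ball whose symmetric difference with the original lies in the annulus $B_{H^-}(u,r_n+r_0)\setminus B_{H^-}(u,r_n-r_0)$. The F\o lner condition~(\ref{amenability-lebesgue}) is precisely calibrated to make this annulus negligible, inside each flow box, compared to the bulk; without it the thickening step genuinely fails in higher dimension, which is the reason the condition is absent in the two-dimensional theorem \ref{magic-equivalent}. Second, the sequence $u_n = g^{\log r_n}u$ need not converge, so mixing cannot be applied with a single fixed test function localized at a limit base point. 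This is handled by the compactness of $\Omega$ under the convex-cocompact hypothesis: one passes to a convergent subsequence $u_{n_k}\to u_\infty\in\Omega$ and exploits that the candidate limit is independent of the accumulation point, or, equivalently, establishes a version of mixing uniform in $u\in\Omega$ through an equicontinuity argument on flow boxes covering $\Omega$.
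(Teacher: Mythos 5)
Your reduction breaks down at its central step. You pass from the tube integral $\int f\,\chi_{\mathcal{T}_n}\,dm$ to the quantity $\frac{1}{m^{ps}(\mathcal{T}_n)}\int (f\circ g^{-\log r_n})\,\chi_{\mathcal{T}'_n}\,dm^{ps}$ ``up to $o(1)$'', invoking the quasi-invariance $g^t_*m=e^{(1-\delta)t}m$ as a ``Lebesgue-to-PS comparison''. That factor only relates $m$ to its own pushforwards; it is not a density between $m$ and $m^{ps}$. In infinite covolume these two measures are mutually singular: their leafwise conditionals are $\lambda_{H^-}$ (Lebesgue) and $\mu_{H^-}$ respectively, the latter carried by a proper closed subset of each leaf, so there is no ``Radon--Nikodym factor $r_n^{\delta-1}$'' converting $\int f\,dm^{ps}$ into $\int f\,dm$. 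Concretely, take $M$ convex-cocompact of infinite volume and $f\ge 0$ a bump function centered at a point of $\mathcal{E}\setminus\Omega$, with support disjoint from $\Omega$. Since $\Omega=\supp m^{ps}$ is $g^t$-invariant, $f\circ g^{-\log r_n}$ vanishes identically on $\Omega$, so your $m^{ps}$-integral is $0$ for every $n$, while the true limit $\frac{1}{m^{ps}(T^1M)}\int f\,dm$ is positive because $\supp m=\mathcal{E}$. Structurally, mixing of the geodesic flow with respect to $m^{ps}$ can only produce limits proportional to $\int f\,dm^{ps}$; the Burger--Roblin measure cannot come out of that correlation alone, so the scheme as written cannot reach the stated limit. (There is also the secondary issue you flag yourself: $\chi_{\mathcal{T}'_n}$ is not a fixed test function — its transversal $g^{\log r_n}V$ is stretched by the unstable expansion — so even the mixing step would need more than a subsequence argument on $u_n=g^{\log r_n}u$.)

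What is missing is precisely the passage from Patterson--Sullivan horospherical averages to Lebesgue ones, and this is the actual content of the paper's proof. The paper takes as input theorem \ref{equidistribution-PS}: the $\mu_{H^-}$-normalized averages $M_{r,u}$ converge to $m^{ps}/m^{ps}(T^1M)$ (this is where mixing-type arguments live, and where the F\o lner condition (\ref{Folner}) for $\mu_{H^-}$, verified in the convex-cocompact case via $\mu_{H^-}(\partial B_{H^-}(v,1))=0$ and compactness of $\Omega$, is used). Then, inside a box $B=T\times P$, it compares the two transverse measures $\nu_T^{r,u}$ (normalized by $\mu_{H^-}(B_{H^-}(u,r))$) and $\nu_T^{B,r,u}$ (normalized by $\lambda_{H^-}(B\cap B_{H^-}(u,r))$); your hypothesis (\ref{amenability-lebesgue}) and its $\mu_{H^-}$-analogue serve only to control the boundary plaques in this comparison, much as you guessed. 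The key point you have no substitute for is that every weak limit of $\nu_T^{B,r_n,u}$ is a holonomy-invariant transverse measure proportional to $\nu_T$, which forces the normalizing ratio $\mu_{H^-}(B_{H^-}(u,r_n))/\lambda_{H^-}(B\cap B_{H^-}(u,r_n))$ to converge to $m^{ps}(T^1M)/m(B)$ — that is exactly how $m$ enters the limit (with a further holonomy argument for boxes where $m(B)>0$ but $m^{ps}(B)=0$). Note that along this route no compactness or subsequence argument on $u_n$ is needed: theorem \ref{equidistribution-PS} is applied directly for the fixed $u$.
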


For a fixed box $B$, the sequence $\left(\lambda_{H^-}(B\cap B_{H^-}(u,r))\right)$
is bounded by  $ \lambda_{H^-}(B_{H^-}(u,r))\sim c r^{d-1}$, where $d=\dim M$,  so that it 
grows polynomially with $r$. As $\Gamma$ is convex-cocompact, $\Omega$ can be covered by finitely 
many boxes $B$ such that $m^{ps}(B)>0$, so that for "many" sequences $r_n\rightarrow +\infty$, 
the assumption (\ref{amenability-lebesgue}) is satisfied.

The fact that this theorem is stated in the convex-cocompact case, 
and not in the geometrically finite setting is also due
to a problem of amenability of horospherical balls, but with respect to the measure $\mu_{H^-}$ induced 
by the Patterson-Sullivan measure $m^{ps}$ on the horospheres, 
which will be described in the next section. 
But we will prove it essentially by the same arguments than theorem \ref{magic-equivalent}. 



\subsection{Higher-dimensional equidistribution towards $m^{ps}$}

Theorems \ref{magic-equivalent} and \ref{magic-equivalent-higher-dim} follow from another
equidistribution result of certain horospherical averages towards 
the Bowen-Margulis-Patterson-Sullivan measure $m^{ps}$.  
For $f:T^1 M\to \R$ continuous with compact support, define 
$$
M_{r,u}(f)=\frac{1}{\mu_{H^-}(B_{H^-}(u,r))}\int_{B_{H^-}(u,r)} f\,d\mu_{H^-}\,.
$$

 In \cite{Scha2} were proved several equidistribution results,
 among them theorem \ref{equidistribution-PS} below,
and a ratio equidistribution theorem towards the measure $m$, 
whose proof implied in fact implicitely theorem \ref{magic-equivalent}. 

\begin{thm}\label{equidistribution-PS} Let $M=\Gamma\backslash\H^n$ 
be a nonelementary geometrically finite hyperbolic manifold. 
Assume either that $n=2$ and $M$ is a surface, or that 
$M$ is convex-cocompact, or that for some $r_0>0$ and all $u\in \Omega$, we  have
\begin{equation}\label{Folner}
\frac{\mu_{H^-}(B_{H^-}(u,r+r_0)\setminus B_{H^-}(u,r-r_0))}{\mu_{H^-}(B_{H^-}(u,r))}
\to 0 \quad \mbox{when} \quad r\to +\infty\,.
\end{equation}
Let $u\in \mathcal{E}$ be a non-periodic vector for the horocyclic flow.
Then, the sequence $(M_{R,u})_{R>0}$ 
converges weakly to the normalized Patterson-Sullivan measure
 $\frac{m^{ps}}{m^{ps}(T^1S)}$ when $R\to +\infty$.
In other words, for all continuous maps $f:T^1M\to  \R$ with compact support, 
$$
M_{r,u}(f)\to \frac{1}{m^{ps}(T^1S)}\int_{T^1S}f\,dm^{ps}\quad\mbox{when}\quad r\to +\infty\,.
$$
Moreover, when $M$ is convex-cocompact, this convergence is uniform in $u\in  \Omega$. 
\end{thm}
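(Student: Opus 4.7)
The plan is to adapt the classical Margulis thickening argument to the Patterson--Sullivan conditional measure $\mu_{H^-}$. First I would reduce, by a partition of unity, to the case where $f$ is continuous and supported in a flow box $B\simeq T\times P$ adapted to the strong stable foliation, in which $m^{ps}$ has the local product form $dm^{ps}=d\mu_{H^-(v)}\,d\nu_T(v)$ with $\nu_T$ the unique transverse measure of full support in $\mathcal{E}$ provided by Roblin's theorem.

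Next, for a small parameter $\epsilon>0$ I would \emph{thicken} the horospherical ball $B_{H^-}(u,r)$ in the geodesic direction and along $W^{su}$ by an amount $\epsilon$, producing a set $\widetilde{B}_r(\epsilon)\subset T^1M$ that is (approximately) a small transverse box times the horospherical ball. Fubini, together with the local product structure of $m^{ps}$, yields
\[
\int_{\widetilde{B}_r(\epsilon)} f\,dm^{ps} = c(\epsilon,u)\int_{B_{H^-}(u,r)} \widetilde{f}_\epsilon\,d\mu_{H^-(u)} + E_r(\epsilon),
\]
where $\widetilde{f}_\epsilon$ is the local transverse average of $f$ (which converges uniformly to $f$ as $\epsilon\to 0$), the constant $c(\epsilon,u)$ depends only on $\epsilon$ and on $u$, and the error $E_r(\epsilon)$ records the discrepancy coming from holonomies of the strong stable foliation across the $\epsilon$-thickening. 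Similarly $m^{ps}(\widetilde{B}_r(\epsilon))=c(\epsilon,u)\,\mu_{H^-}(B_{H^-}(u,r))\,(1+o_r(1))$. These holonomies only alter the horospherical ball within an annulus $B_{H^-}(u,r+r_0)\setminus B_{H^-}(u,r-r_0)$ for some $r_0=r_0(\epsilon)$, so the amenability hypothesis (\ref{Folner}) is precisely what guarantees $E_r(\epsilon)=o(\mu_{H^-}(B_{H^-}(u,r)))$ as $r\to\infty$.

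The core task then becomes showing
\[
\frac{1}{m^{ps}(\widetilde{B}_r(\epsilon))}\int_{\widetilde{B}_r(\epsilon)} f\,dm^{ps}\longrightarrow \frac{1}{m^{ps}(T^1M)}\int f\,dm^{ps}\qquad(r\to\infty).
\]
Here I would extract any weak-$*$ limit $\mu_\infty$ of the probability measures $\frac{1}{m^{ps}(\widetilde{B}_r(\epsilon))}\mathbf{1}_{\widetilde{B}_r(\epsilon)}\,m^{ps}$ as $r\to\infty$, and check that $\mu_\infty$ is supported in $\overline{\mathcal{E}}$, that its conditionals on strong stable leaves coincide with $\mu_{H^-}$ (which is built into the construction, since the thickened sets are unions of horospherical balls inside their leaves), and that the induced transverse measure is $\Gamma$-quasi-invariant of full support. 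The non-periodicity of $u$ under the horocycle flow forces the orbit $(h^su)_{s\in\R}$ to be dense in $\mathcal{E}$, which rules out $\mu_\infty$ being concentrated on a periodic horocycle. By Roblin's uniqueness theorem for the transverse invariant measure, $\mu_\infty$ is forced to equal $m^{ps}/m^{ps}(T^1M)$. Letting $\epsilon\to 0$ and using the uniform convergence $\widetilde{f}_\epsilon\to f$ on compact sets then transfers the conclusion to the original horospherical average.

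The hard part will be the last identification, namely showing that any weak-$*$ limit $\mu_\infty$ has full support in $\mathcal{E}$ and the right transverse invariance: the thickened sets $\widetilde{B}_r(\epsilon)$ must sweep out transverse directions in a regular enough way, and in the presence of cusps the geodesic orbit $(g^tu)_{t\ge 0}$ may excurse arbitrarily deeply into them, spoiling uniform estimates. In the convex-cocompact case $\Omega$ is compact, the family $\{\tfrac{1}{m^{ps}(\widetilde{B}_r(\epsilon))}\mathbf{1}_{\widetilde{B}_r(\epsilon)}\,m^{ps}\}_{r>0,\,u\in\Omega}$ is tight, and a compactness argument upgrades the convergence to be uniform in $u\in\Omega$. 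In the surface case $n=2$ horospheres are one-dimensional, so the doubling of $\mu_{H^-}$ (which follows from the Patterson shadow lemma) makes the F\o lner condition (\ref{Folner}) automatic; in higher-dimensional geometrically finite settings (\ref{Folner}) has to be added by hand precisely to bypass this obstruction.
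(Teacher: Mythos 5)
Your reduction to flow boxes and your use of the amenability hypothesis to kill the leafwise boundary terms are fine, but the step you defer as ``the hard part'' is not a technical verification -- it is the whole theorem, and your proposed mechanism for it does not work. You identify the weak-$*$ limit $\mu_\infty$ by appealing to ``Roblin's uniqueness theorem for the transverse invariant measure'' after checking only that the induced transverse measure is \emph{$\Gamma$-quasi-invariant} of full support. Roblin's theorem classifies the $\Gamma$-\emph{invariant} (holonomy-invariant) transverse measures; quasi-invariance is vastly weaker and selects nothing. More seriously, nothing in your construction produces invariance of the limiting transverse part: in a flow box $B=T\times P$ far from $u$, the transverse part of $\frac{1}{m^{ps}(\widetilde{B}_r(\epsilon))}\mathbf{1}_{\widetilde{B}_r(\epsilon)}m^{ps}$ is governed by how often, and near which points of $T$, the ball $B_{H^-}(u,r)$ crosses the box; showing that this crossing distribution becomes asymptotically holonomy-invariant (equivalently, converges to a multiple of $\nu_T$) \emph{is} the equidistribution statement you are trying to prove. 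The F\o lner condition (\ref{Folner}) only controls errors along the leaf (partial plaques, annuli); it says nothing about the transverse repartition of the crossings, so the argument is circular at its core. The claim that non-periodicity ``rules out $\mu_\infty$ being concentrated on a periodic horocycle'' via density of the orbit is likewise not a proof (though here domination of your normalized restrictions by a fixed multiple of the finite measure $m^{ps}$ would rescue that particular point, since $\nu_o$ is atomless).

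Two further mismatches with the statement as given. In the convex-cocompact branch the F\o lner condition is not a hypothesis, so your error estimate for $E_r(\epsilon)$ has nothing to stand on there; this verification is in fact the only part the paper proves from scratch, using Roblin's result that $\mu_{H^-}(\partial B_{H^-}(v,1))=0$ for $v\in\Omega$, the identity $B_{H^-}(u,r+r_0)\setminus B_{H^-}(u,r-r_0)=g^{-\log r}\bigl(B_{H^-}(g^{\log r}u,1+\tfrac{r_0}{r})\setminus B_{H^-}(g^{\log r}u,1-\tfrac{r_0}{r})\bigr)$ and compactness of $\Omega$ (everything else in the paper's proof is a citation: $n=2$ is \cite{Scha2}, Thm 1.2, the F\o lner case is \cite{Scha0}/\cite{Scha2}, and the uniformity on $\Omega$ is Roblin). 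Finally, your explanation of the surface case is off: it is not that a doubling property of $\mu_{H^-}$ yields (\ref{Folner}) (doubling does not imply negligibility of annuli, and in the cusped case the ball mass $r^{\delta}e^{(1-\delta)d(\pi(g^{\log r}u),K_0)}$ genuinely oscillates); the reason dimension one is easier is that a ball in a one-dimensional leaf has at most two boundary plaques, whose mass is bounded while $\mu_{H^-}(B_{H^-}(u,r))\to+\infty$.
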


\begin{proof}
In the case $n=2$, this is \cite{Scha2}, thm 1.2.
The uniform convergence in the convex-cocompact case is due to Roblin \cite{Roblin} (see also \cite[thm 3.1]{Scha2}). 
Under the assumption that (\ref{Folner}) is true, 
this theorem follows from \cite{Scha0}, thm 8.1.1. or \cite{Scha2}, remark 
3.8. It remains to prove that (\ref{Folner}) 
is true on hyperbolic convex-cocompact manifolds. 

The proposition page 1799 in section 3.1 of \cite{Roblin2} implies 
that (even in the geometrically finite case)
for all $u\in \Omega$, and $r>0$, $\mu_{H^-}(\partial B_{H^-}(u,r))=0$. 
In the convex-cocompact setting, observe that
\begin{eqnarray*}
\frac{\mu_{H^-}(B_{H^-}(u,r+r_0)\setminus B_{H^-}(u,r-r_0))}{\mu_{H^-}(B_{H^-}(u,r))}=\\
\frac{\mu_{H^-}(B_{H^-}(g^{\log r}u,1+\frac{r_0}{r})\setminus B_{H^-}(g^{\log r}u,1-\frac{r_0}{r}))}{\mu_{H^-}(B_{H^-}(g^{\log r}u,1))}\,.
\end{eqnarray*}
Let us prove that this quantity  converges to $0$ when $r\to +\infty$. 
Let $\alpha\ge0$ be a limit point of this ratio when 
$r_n\to +\infty$.
When $\Gamma$ is convex-cocompact, the set $\Omega$ is compact, 
and the limit points of $(g^tu)$ when $t\to +\infty$ are in $\Omega$,  so that up to a subsequence, 
if $u\in \mathcal{E}$, we can assume that $g^{\log r_n}u$ converges to some $v\in \Omega$. 
But $\mu_{H^-}(\partial B_{H^-}(v,1))=0$, so that the ratio 
$\frac{\mu_{H^-}(B_{H^-}(g^{\log r_n}u,1+\frac{r_0}{r_n})\setminus B_{H^-}(g^{\log r_n}u,1-\frac{r_0}{r_n}))}{\mu_{H^-}(B_{H^-}(g^{\log r_n}u,1))}$
 has to converge to $0$ when $r_n\to \infty$.
 Thus, $\alpha=0$. 

The conclusion of the theorem follows. 
\end{proof}


\subsection{Proofs of theorems \ref{magic-equivalent} and \ref{magic-equivalent-higher-dim}}

In this section, we work on a nonelementary geometrically finite surface $S=\Gamma\backslash \H$, or
on a higher dimensional nonelementary convex-cocompact manifold $M=\Gamma\backslash \H^n$.

 Note  the following property of $\tau$.

\begin{lem}[Schapira  \cite{Scha2}, Fact 3.7]\label{propertiesoftau}
For any $u\in \mathcal{E}$ such that $u^+\in \Lambda\setminus\Lambda_p$ is a nonparabolic limit point,
$$
\lim_{t\rightarrow +\infty} t^\delta \tau(g^{\log T}u) =\lim_{t\to +\infty}\mu_{H^-}(B_{H^-}(u,t))= +\infty.
$$
\end{lem}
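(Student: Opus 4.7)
The plan is to combine the scaling identity $\mu_{H^-}(B_{H^-}(u,R))=R^{\delta}\tau(g^{\log R}u)$ with the monotonicity of $R\mapsto \mu_{H^-}(B_{H^-}(u,R))$ and the radial recurrence of the geodesic ray $(g^{t}u)_{t\ge 0}$. Since $R\mapsto \mu_{H^-}(B_{H^-}(u,R))$ is nondecreasing, it admits a limit in $[0,+\infty]$, so it is enough to produce a single sequence $R_n\to +\infty$ along which $\mu_{H^-}(B_{H^-}(u,R_n))\to +\infty$. Writing $R_n=e^{t_n}$, this reduces to finding $t_n\to +\infty$ along which $\tau(g^{t_n}u)$ is bounded below by a positive constant.

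In the geometrically finite setting $\Lambda=\Lambda_{\mathrm{rad}}\sqcup\Lambda_p$, so the hypothesis $u^{+}\in\Lambda\setminus\Lambda_p$ is exactly the radiality of $u^{+}$. Hence the ray $(g^t u)_{t\ge 0}$ returns infinitely often to a compact subset of $T^{1}S$: there exist $t_n\to+\infty$ and a compact $K\subset T^{1}S$ with $g^{t_n}u\in K$, and after extraction we may assume $g^{t_n}u\to v_\infty\in K$. I claim $v_\infty\in\Omega$. Lifting to $T^{1}\H$, pick $\gamma_n\in\Gamma$ so that $\gamma_n g^{t_n}u$ converges to some lift $\tilde{v}_\infty$ of $v_\infty$. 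Then $\gamma_n\cdot u^{+}\to \tilde{v}_\infty^{+}$ lies in the closed $\Gamma$-invariant set $\Lambda$, and the distinctness of the $\gamma_n$ together with the proper discontinuity of $\Gamma$ on the ordinary set $\partial\H\setminus\Lambda$ forces any accumulation point of $(\gamma_n\cdot u^{-})$ to lie in $\Lambda$ as well. Hence $\tilde{v}_\infty\in\Omega$.

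Now $\tau$ is continuous on $T^{1}S$: this follows from the continuous variation of the leaf measures $\mu_{H^-(v)}$ noted in section \ref{2}, combined with the vanishing $\mu_{H^-(v)}(\partial B_{H^-}(v,1))=0$ for $v\in\Omega$ invoked in the proof of theorem \ref{equidistribution-PS}. Moreover $\tau$ is strictly positive on $\Omega$: for $v\in\Omega$ the backward endpoint $v^{-}$ lies in $\Lambda$, so $v$ belongs to the support of $\mu_{H^-(v)}$ and the open unit ball $B_{H^-}(v,1)$ has positive mass. Therefore $\tau(v_\infty)>0$, and by continuity $\tau(g^{t_n}u)\ge c:=\tau(v_\infty)/2>0$ for all large $n$. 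With $R_n=e^{t_n}$ we conclude
$$\mu_{H^-}(B_{H^-}(u,R_n))=R_n^{\delta}\tau(g^{t_n}u)\ge c\,R_n^{\delta}\longrightarrow +\infty,$$
and the monotonicity step upgrades this to a full limit.

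The only delicate point I foresee is the claim $v_\infty\in\Omega$ when $u^{-}\notin\Lambda$, which is not automatic: it rests on the proper discontinuity of the $\Gamma$-action on $\partial\H\setminus\Lambda$ to prevent the backward endpoints $\gamma_n\cdot u^{-}$ from accumulating inside the ordinary set. Everything else is routine bookkeeping of the scaling relation, monotonicity, and the continuity/positivity of $\tau$ on $\Omega$.
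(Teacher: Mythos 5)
Your argument is correct, but it cannot be "the same route as the paper" for the simple reason that the paper gives no proof at all: Lemma \ref{propertiesoftau} is imported verbatim from Schapira \cite{Scha2}, Fact 3.7, and the present article never reproves it. What you supply is a self-contained derivation, and it is the natural one: the scaling relation $\mu_{H^-}(B_{H^-}(u,R))=R^{\delta}\tau(g^{\log R}u)$ stated at the top of Section \ref{3}, monotonicity of ball measures (which legitimately upgrades a subsequence to the full limit), the identification of $u^{+}\in\Lambda\setminus\Lambda_p$ with radiality via $\Lambda=\Lambda_{\rm rad}\sqcup\Lambda_p$ for geometrically finite groups, recurrence of the forward geodesic ray to a compact set, and positivity plus continuity of $\tau$ at a recurrence limit $v_\infty\in\Omega$. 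Your treatment of the one genuinely delicate point — that $v_\infty\in\Omega$ even when $u^{-}$ is an ordinary point — is sound: accumulation points of the $\Gamma$-orbit of a point of the discontinuity domain must lie in $\Lambda$ by proper discontinuity. Two small steps are left implicit and should be recorded: (i) the $\gamma_n$ realizing the convergence upstairs contain infinitely many distinct elements (otherwise a fixed $\gamma g^{t_{n_k}}\tilde u$ would both converge in $T^1\H$ and escape to infinity, since $\pi(g^{t}\tilde u)\to u^{+}$), which is what your proper-discontinuity argument actually needs; and (ii) continuity of $\tau$ at $v_\infty$ uses that the two boundary points of $B_{H^-}(v_\infty,1)$ are null, which holds because the paper records that all $\nu_x$ are nonatomic — you only need the Roblin-type boundary-null statement at points of $\Omega$, which is where $v_\infty$ lives, so your citation is adequate. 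Compared with the alternative route available inside the paper (Proposition \ref{mesure-des-boules-horospheriques}, giving $\tau\asymp e^{(1-\delta)d(\pi(\cdot),K_0)}$ on $\Omega$, plus recurrence), your argument has the advantage of working for all $u\in\mathcal{E}$ with $u^{+}$ radial, without assuming $u^{-}\in\Lambda$, which is exactly the generality the lemma requires.
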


Let us begin the proof of theorems \ref{magic-equivalent} 
and \ref{magic-equivalent-higher-dim}. 

\begin{proof} The strategy of the proof follows the proof of 
theorem 1.2 in \cite{Scha2}, and differs only at the end. 
 It is enough to prove that for $u\in \mathcal{E}$,  $f:T^1M\to \R$ 
continuous with small compact support 
included in a relatively compact box $B=T\times P$,   we have
$$
\frac{\int_{B_{H^-}(u,r)}f \,d\lambda_{H^-}}{\mu_{H^-}(B_{H^-}(u,r))}\to
 \frac{\int_{T^1M} f\,dm}{m^{ps}(T^1M)}\,.
$$
One can assume that $m(B)>0$ and $m(\partial B)=0$. 

$*$ {\em First step : Assume also that $m^{ps}(B)>0$. }\\
Therefore, up to shrinking $B=T\times P$ a little bit, we can assume that
for all $v\in B$, if $P_v$ is the plaque of $B$ containing $v$, 
$\lambda_{H^-}(P_v)\ge l_0>0$ and $\mu_{H^-}(P_v)\ge m_0>0$. 
Let $r_0=\sup_{v\in B} \sup_{w\in P_v} d_{H^-}(v,w)$. 

On a transversal $T\subset B$, define 
$$
\nu_T^{r,u}=\frac{1}{\mu_{H^-}(B_{H^-}(u,r))}\sum_{t\in T\cap B_{H^-}(u,r)}\delta_t\,,\quad \mbox{and}\quad 
\nu_T^{B,r,u}=\frac{\mu_{H^-}(B_{H^-}(u,r))}{\lambda_{H^-}(B\cap B_{H^-}(u,r))}\,\nu_{T}^{r,u}\,.
$$

Now, observe that 
$$
M_{r,u}(f):=\int_T\left(\int_{P_t} f\,d\mu_{H^-}\right)d\nu_T^{r,u} +R_{r,u}(f)\,, \quad\mbox{and} $$
$$
M_{r,u}^{B,\lambda}(f):=\frac{ \int_{B_{H^-}(u,r)} f\,d\lambda_{H^-}(v)}{\lambda_{H^-}(B\cap B_{H^-}(u,r))}=
\int_T\left(\int_{P_t} f\,d\lambda_{H^-}\right)d\nu_T^{B,r,u} +R_{B,r,u}(f)\,, 
$$
where the error terms $R_{r,u}(f)$ and $R_{B,r,u}(f)$ 
correspond on one hand to pieces of $B_{H^-}(u,r)$ 
such that $B_{H^-}(u,r)$ intersects the plaque $P_v$ of $v$ 
without intersecting $T$, and on the other
hand to pieces such that $B_{H^-}(u,r)$ intersects $P_v$ and $T$, 
but in such a way that 
$B_{H^-}(u,r)\cap P_v\varsubsetneq P_v$. We refer to \cite{Scha2},
 figure 3.1 and proof of theorem 1.2 for more details. 

In the case of surfaces, there are at most two plaques corresponding to these error terms, so that 
we can bound them   as follows 
$$
R_{r,u}(f)\le \frac{2\|f\|_\infty\sup_{t\in T}\mu_{H^-}(P_t)}{\mu_{H^-}(B_{H^-}(u,r))}\quad \mbox{and}\quad
R_{B,r,u}(f)\le 
\frac{2\|f\|_\infty\sup_{t\in T}\lambda_{H^-}(P_t)}{\lambda_{H^-}(B_{H^-}(u,r)\cap B)}\,.
$$
They converge to $0$ when $r\to +\infty$ as the denominators converge to $+\infty$. 

In the higher dimensional case, if $r_0\ge\sup_{v\in B}\sup_{w\in P_v}d_{H^-}(v,w)$, we can write 
$$
R_{r,u}(f)\le 
\frac{\mu_{H^-}(B_{H^-}(u,r+r_0))\setminus B_{H^-}(u,r-r_0))}{\mu_{H^-}(B_{H^-}(u,r))}\times \|f\|_\infty\,,\quad\mbox{and}
$$
$$
R_{B,r,u}(f)\le 
\frac{\lambda_{H^-}(B\cap B_{H^-}(u,r+r_0))\setminus B_{H^-}(u,r-r_0))}{\lambda_{H^-}(B\cap B_{H^-}(u,r))}\times \|f\|_\infty\,.
$$
Therefore, as we work under the assumptions of theorem  \ref{equidistribution-PS}, and thanks to the assumption
(\ref{amenability-lebesgue}), 
these two error terms 
converge to $0$ as $r_n\to \infty$.  

Now, by theorem \ref{equidistribution-PS}, $M_{r,u}$ converges weakly 
to $\frac{1}{m^{ps}(T^1M)}m^{ps}$ and $R_{r,u}(f)\to 0$, so that the transverse measure $\nu_T^{r,u}$
converges to the transverse measure $\nu_T$ induced by the Patterson-Sullivan measure. 

Similarly, as $M_{r,u}^{B,\lambda}$ defines a probability
measure on $B$, when $r_n\to +\infty$, all its limit points for the 
weak topology on $B$ are probability measures on $B$. 
As $R_{B,r_n,u}(f)\to 0$, 
we deduce therefore that $\nu_T^{B,r_n,u}$ has limit points 
for the weak topology on $T$ when $r_n\to +\infty$, 
which define transverse invariant measures on all 
transversals of $B$, and the limit points of the sequence
$(M_{r_n,u}^{B,\lambda})$ can be written as the noncommutative product of these 
limit transverse measures by $(\lambda_{H^-})$. 
 
As $\nu_T^{B,r,u}= \frac{\mu_{H^-}(B_{H^-}(u,r))}{\lambda_{H^-}(B\cap B_{H^-}(u,r))}\,\nu_{T}^{r,u}$, 
and $\nu_T^{r,u}\to \nu_T$, we deduce that all limit points of the ratio 
$\frac{\mu_{H^-}(B_{H^-}(u,r_n))}{\lambda_{H^-}(B\cap B_{H^-}(u,r_n))}$ when $r_n\to +\infty$ 
are positive and finite, and that all limit points of $(\nu_T^{B,r_n,u})$ are proportional to $\nu_T$. 

This implies that all limit points of $M_{r_n,u}^{B,\lambda}$
are probability measures on $B$ 
proportional to the Burger-Roblin measure $m$ restricted to $B$. 
All these measures being probability measures giving mass $1$ to $B$, 
they are all equal, so that we proved the weak convergence on $B$ 
of $M_{r_n,u}^{B,\lambda}$ towards $\frac{1}{m(B)}m_{|B}$, and as a consequence, the convergence of 
the ratio $\frac{\mu_{H^-}(B_{H^-}(u,r_n))}{\lambda_{H^-}(B\cap B_{H^-}(u,r_n))}$
to a positive finite constant $c$. 
A normalization argument implies now the following convergence: 
$$ 
\frac{\mu_{H^-}(B_{H^-}(u,r_n))}{\lambda_{H^-}(B\cap B_{H^-}(u,r_n))}\to 
\frac{m^{ps}(T^1M)}{m(B)}\quad\mbox{when}\quad r_n\to +\infty\,.
$$
This gives immediately, for $f:B\to \R$ continuous with compact support,
 the desired convergence: 
$$
\frac{\int_{B_{H^-}(u,r_n)} f\,d\lambda_{H^-}}{\mu_{H^-}(B_{H^-}(u,r_n))}\to
 \frac{\int_{T^1M} f\,dm}{m^{ps}(T^1M)}
\quad\mbox{when}\quad r\to +\infty\,.
$$

$*$ {\em Second  step: consider a relatively compact box $B$ such that $m(B)>0$ but $m^{ps}(B)=0$. }
Write as before $B=T\times P$. 
The only problem on $B$ comes from the fact that 
we cannot use directly theorem \ref{equidistribution-PS} to deduce the convergence of $\nu_T^{r,u}$ to 
$\nu_T$, because $m^{ps}(B)=0$. 

However, as the support $\mathcal{E}$ of $m$ is the union of horospheres 
intersecting $\Omega$, which is the support 
of $m^{ps}$, if $B$ is small enough, it is possible to find a holonomy
 map $\zeta:T\to T'$ along the leaves of 
the strong stable foliation, and another box $B'=T'\times P'$, such that $m^{ps}(B')>0$ and $m(B')>0$. 
Now, the above reasoning applies on $B'$ and  implies that $\nu_{T'}^{r,u} \to \nu_{T'}$. 
On the transversal $T$ of $B$, the two measures $\zeta^{-1}_*\nu_{T'}^{r,u}$ and $\nu_T^{r,u}$ differ from a quantity
bounded by $\frac{\mu_{H^-}(B_{H^-}(u,r+r_0)\setminus B_{H^-}(u,r-r_0))}{\mu_{H^-}(B_{H^-}(u,r))}$, 
with $r_0=\sup_{t\in T} d_{H^-}(t,\zeta(t))$,  which goes to $0$ as $r\to +\infty$. 
As the transverse measure $(\nu_T)_T$ is invariant under the holonomy of the foliation, we deduce that
$\nu_{T}^{r,u}$ converges to $\frac{1}{m^{ps}(T^1S)}\nu_T$ as $r\to +\infty$. 

The end of the proof is as above.\\

$*$ {\em  Third step: proof of the uniform convergence in theorem \ref{magic-equivalent} in  the case of a convex-cocompact surface} 
Fix a continuous map $f$ with compact support. It can be shown that the maps $u\to \int_{-1}^1f\circ g^{-t}(h^su)\,ds$
 are equicontinuous in $t\ge 0$. It is proven for example 
in \cite[lemma 5.10]{Babillot}, apparently under the assumption that $\Gamma$ is a lattice, but this assumption is not used in the proof.

As $\Omega$ is compact, they are uniformly equicontinuous on $\Omega$.
As the maps $u\mapsto \tau(u)=\mu^{ps}((h^su)_{|s|\le 1})$ are continuous on $\Omega$, and 
therefore uniformly continuous, we deduce that the ratios 
$$
u\mapsto Q(f,u,t):=t^{1-\delta}\frac{\int_{-1}^1 f\circ g^{-\log t}(h^sv)\,ds}{\mu^{ps}_{H^-(v)}((h^sv)_{|s|\le 1})}
$$
are multiplicatively equicontinuous in $t\ge 1$, and therefore uniformly multiplicatively equicontinuous in $u\in \Omega$, for $t\ge 1$, in the sense that 
given $\varepsilon>0$, there exists $\eta>0$, such that if $v,w\in \Omega$, with $d(v,w)<\eta$, then for all $t\ge 1$, 
$\displaystyle \left|\frac{Q(f,v,t)}{Q(f,w,t)}\right|\le e^\varepsilon$.

By theorem \ref{magic-equivalent-flowed},  these ratios converge, for all fixed $u\in\Omega$, to $\frac{1}{m^{ps}(T^1M)}\int_{T^1M}f\,dm$ when 
$t\to +\infty$. Thus,  the uniform equicontinuity and a standard argument imply that this convergence is uniform in $u\in\Omega$. 


Recall now that
$$
Q(f,v,t)=\frac{\int_{-t}^t f\circ h^s(g^{-\log t}v)\,ds}{t^\delta \tau(g^{\log t}g^{-\log t} v)}\,.
$$
In particular, we deduce from the uniform convergence of $Q(v,f,t)$ that the maps 
$$
w\mapsto \frac{\int_{-t}^t f\circ h^s (w)\,ds}{t^\delta\tau(g^{\log t}w)} 
$$
converge also uniformly to to $\frac{1}{m^{ps}(T^1M)}\int_{T^1M}f\,dm$ when 
$t\to +\infty$.
\end{proof}


\subsection{Equidistribution of horocycles pushed by the geodesic flow}

In the proof of theorem \ref{largescale}, we will need another
 equidistribution theorem for horocycles, 
due to Thomas Roblin \cite{Roblin}, thm 3.4 (see also \cite{mbab2}). 
In fact, the statement of theorem 3.4 of Roblin seems slightly different, 
but in its proof, page 52, formula $(*)$, he establishes exactly the result below. 

\begin{thm}[ Roblin \cite{Roblin} ]\label{magic-equivalent-flowed} 
Let $S$ and $\tau$ be as in Theorem \ref{magic-equivalent}. 
For any $u\in T^1S$, and all $f:T^1S\to \R$ continuous with compact support, we have
\begin{multline*}
\lim_{t\rightarrow +\infty} \frac{1}{t^{\delta}}\int_{-t}^t f(h^sg^{-\log t}u)ds = 
\lim_{t\rightarrow +\infty} t^{1-\delta}\int_{-1}^1 f(g^{-\log t}h^su)ds  \\
 = \frac{\tau(u)}{m^{ps}(T^1S)}\int_{T^1S}f\,dm \,. 
\end{multline*}
\end{thm}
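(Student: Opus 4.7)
The plan has three stages. \textbf{First I would reduce the two claimed expressions to one.} From the commutation $g^t h^s = h^{se^{-t}}g^t$ with $t=\log T$, one has $g^{-\log T}h^s = h^{sT}g^{-\log T}$, so the substitution $\sigma = sT$ gives
$$\int_{-1}^{1} f(g^{-\log T}h^s u)\,ds \;=\; \frac{1}{T}\int_{-T}^{T} f(h^\sigma g^{-\log T}u)\,d\sigma.$$
Multiplying by $T^{1-\delta}$ on the left and by $T^{-\delta}$ on the right shows that the two expressions of the theorem coincide for every $T$, not merely in the limit.

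\textbf{Next, I would convert the claim to a weak-convergence statement.} Introduce the positive locally finite measures
$$\nu_T \;:=\; \frac{1}{T^\delta}\int_{-T}^{T}\delta_{h^\sigma g^{-\log T}u}\,d\sigma$$
on $T^1S$; the theorem says $\nu_T \to \tau(u)\,m^{ps}(T^1S)^{-1}\,m$ weakly on $C_c(T^1S)$. Pushing forward by $g^{\log T}$ and using the commutation relation collapses $(g^{\log T})_*\nu_T$ onto the fixed Lebesgue measure on the horocyclic arc $\{h^s u:|s|\le 1\}$ through $u$, weighted by $T^{1-\delta}$. Meanwhile, the quasi-invariance $(g^t)_* m = e^{(1-\delta)t}m$ on $\mathcal{E}$ shows that $(g^{\log T})_*(\tau(u)\,m^{ps}(T^1S)^{-1}\,m)$ scales by the same factor $T^{1-\delta}$, so the claimed convergence is dimensionally balanced.

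\textbf{Finally, I would invoke mixing of $(g^t)$ with respect to $m^{ps}$} (Babillot \cite{mbab2}), which is the engine used by Roblin in \cite{Roblin} for such statements. To access mixing, thicken the $1$-dimensional integration in the definition of $\nu_T$ into a $3$-dimensional one over a small flow box $B\subset T^1S$ at $u$: parametrise $B$ using (horocyclic plaque) $\times$ (transversal) $\times$ (geodesic), so that $dm^{ps}|_B = d\mu_{H^-}\,d\nu\,dt$, and pick a nonnegative continuous bump $\chi$ of unit integral on the transversal-and-geodesic factor. I would compute the thickened integral $T^{1-\delta}\int_B \chi(v)\,(f\circ g^{-\log T})(v)\,dm^{ps}(v)$ in two ways: (a) by horocyclic disintegration, approximating the $\mu_{H^-}$-mass of the plaque through $u$ by $\tau(u)=\mu_{H^-}(B_{H^-}(u,1))$ using the continuity of the horocyclic conditionals, one recovers $\tau(u)$ times the quantity of interest, up to an error that vanishes as $\mathrm{diam}(\mathrm{supp}\,\chi)\to 0$; (b) by mixing applied to $\chi$ and $f$, the integral tends to $\int f\,dm/m^{ps}(T^1S)$, the factor $T^{1-\delta}$ being absorbed by the quasi-invariance of $m$ under $g^t$. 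Equating these two readings and contracting $\chi$ to a Dirac at the transverse--geodesic coordinates of $u$ yields the statement. The main obstacle is making the two limits compatible: the mixing convergence must be uniform as $\mathrm{supp}\,\chi$ shrinks, which requires equicontinuity of the horocyclic averages $v\mapsto\int_{-1}^{1}f(g^{-t}h^s v)\,ds$ in $t\ge 0$ --- exactly as used in Stage~3 of the proof of Theorem \ref{magic-equivalent}. The extension to \emph{every} $u\in T^1S$ is then automatic, since both sides vanish for $u$ whose geodesic ray eventually leaves a neighborhood of $\Omega$ ($\tau(u)=0$ and $f$ is compactly supported).
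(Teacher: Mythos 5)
Your first reduction (the two expressions are identically equal via $g^{-\log T}h^s=h^{sT}g^{-\log T}$) is fine, but the core of your argument --- the two-way reading of the thickened integral --- has a genuine gap coming from a confusion between the two families of conditional measures on horocycles. The disintegration of $m^{ps}$ over a flow box has plaque conditionals $\mu_{H^-}$ (Patterson--Sullivan), not arc length $\lambda_{H^-}=ds$; when $\delta<1$ these are mutually singular (the PS conditional lives on a Cantor-like set of directions $v^-\in\Lambda$). So your reading (a) of $\int_B \chi\,(f\circ g^{-\log T})\,dm^{ps}$ produces $\mu_{H^-}$-integrals of $f$ along the pushed plaques, and no amount of shrinking $\mathrm{supp}\,\chi$ turns these into the Lebesgue integral $\int_{-t}^{t}f(h^\sigma g^{-\log t}u)\,d\sigma$ that appears in the statement. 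Reading (b) is also off on normalization: mixing is with respect to the finite, $(g^t)$-\emph{invariant} measure $m^{ps}$, so $\int \chi\,(f\circ g^{-\log T})\,dm^{ps}$ converges to $\frac{1}{m^{ps}(T^1S)}\int\chi\,dm^{ps}\int f\,dm^{ps}$ with no factor of $T^{1-\delta}$ to absorb; the quasi-invariance of $m$ plays no role there (and you cannot run mixing against $m$ itself, which is infinite and not $g^t$-invariant). As written, your two readings therefore do not equate: the prefactor $T^{1-\delta}$ makes one side blow up when $\delta<1$, and the limit you would actually extract from mixing involves $m^{ps}$, not $m$.

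What mixing plus thickening genuinely yields is Babillot's theorem quoted in Section \ref{3}: the $\mu_{H^-}$-averages on the pushed horocycle $B_{H^-}(g^{-\log t}u,t)$ equidistribute toward $m^{ps}/m^{ps}(T^1M)$. The missing step in your proposal is precisely the conversion from $\mu_{H^-}$-averages to $\lambda_{H^-}$-averages, i.e.\ the comparison of the two conditionals at scale $t$ via transverse invariant measures, which is what produces the Burger--Roblin measure $m$ and the normalization $t^\delta\tau(u)=\mu_{H^-}\bigl((h^\sigma g^{-\log t}u)_{|\sigma|\le t}\bigr)$ in the statement. This is exactly the route the paper indicates: it does not reprove the theorem but cites Roblin (M\'em.\ SMF, Th.\ 3.4, formula $(*)$ p.\ 52), remarking that one may alternatively deduce it from Babillot's theorem ``exactly in the same way'' as Theorem \ref{magic-equivalent} is deduced from Theorem \ref{equidistribution-PS}; that transverse-measure argument is the ingredient your sketch lacks. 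Finally, your closing claim that the case of arbitrary $u\in T^1S$ is ``automatic'' because both sides vanish when $\tau(u)=0$ also needs an argument: $\tau(u)=0$ does not by itself bound the Lebesgue time the pushed arc spends in $\mathrm{supp}\,f$ by $o(t^\delta)$.
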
  

Note that more generally, Roblin obtains a result in all dimensions. 
If $M$ is a geometrically finite manifold
of any dimension, we have
$$
\lim_{t\to +\infty} \frac{1}{t^\delta}\int_{B_{H^-}(g^{-\log t}u,t)}f(v)\,d\lambda_{H^-}(v)
=
\frac{\tau(u)}{m^{ps}(T^1M)}\int_{T^1M}f\,dm\,.
$$

 Note that a similar statement was also proved by Oh and Shah \cite{OS}. In fact, it is possible to  deduce 
Theorem \ref{magic-equivalent-flowed} from the Theorem of \cite[Thm 3]{mbab2} below exactly in the same way 
than to deduce  Theorem \ref{magic-equivalent} from Theorem \ref{equidistribution-PS}. 
 
\begin{thm}[Babillot] For all $u\in T^1M$ such that $\tau(u)>0$, 
and all continuous $f$ with compact support on $T^1M$, we have
\begin{multline*}
\lim_{t\rightarrow +\infty} \frac{1}{\mu_{H^-(g^{-\log t}u)}(B_{H⁻}(g^{-\log t}u,t))} \int_{B_{H⁻}(g^{-\log t}u,t)} f(v)d\mu_{H^-}(v)=\\
\frac{1}{m^{ps}(T^1M)}\int_{T^1M}f dm^{ps}.
\end{multline*}
\end{thm}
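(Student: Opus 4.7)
The plan is to deduce this statement from the mixing of the geodesic flow with respect to $m^{ps}$ (Babillot's mixing theorem for geometrically finite hyperbolic manifolds) via a classical Margulis-type thickening argument.

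First I would reformulate the claim by a change of variables. Using the identity $B_{H^-}(g^{-\log t}u,t)=g^{-\log t}(B_{H^-}(u,1))$, which follows from the scaling $d_{H^-}(g^{-\log t}v,g^{-\log t}w)=t\,d_{H^-}(v,w)$, together with the push-forward rule $(g^{t})_{*}\mu_{H^-(v)}=e^{\delta t}\mu_{H^-(g^{t}v)}$ for the Patterson-Sullivan conditionals, one directly checks that
\[
\int_{B_{H^-}(g^{-\log t}u,t)} f\,d\mu_{H^-(g^{-\log t}u)}=t^{\delta}\int_{B_{H^-}(u,1)} f(g^{-\log t}w)\,d\mu_{H^-(u)}(w);
\]
taking $f\equiv 1$, the denominator equals $t^{\delta}\tau(u)$. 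Since $\tau(u)>0$, the measure $\sigma_u:=\tau(u)^{-1}\mu_{H^-(u)}|_{B_{H^-}(u,1)}$ is a well-defined probability measure, and the theorem becomes the weak convergence
\[
(g^{-\log t})_{*}\sigma_u\,\longrightarrow\,\frac{m^{ps}}{m^{ps}(T^1M)}\quad\text{as } t\to+\infty.
\]

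Next I would apply the Margulis thickening strategy. For small $\varepsilon>0$, I would form a relatively compact flow box $\mathcal{B}_\varepsilon$ obtained by saturating $B_{H^-}(u,1)$ under $\varepsilon$-pieces of the strong unstable foliation and under the geodesic flow for $|s|\le\varepsilon$, and introduce the normalized restriction $\rho_\varepsilon:=m^{ps}(\mathcal{B}_\varepsilon)^{-1}\,m^{ps}|_{\mathcal{B}_\varepsilon}$, a probability measure. The local product decomposition of $m^{ps}$ on $T^1M$ (descending from the product form $d\nu_o(v^-)\otimes d\nu_o(v^+)\otimes dt$ on $T^1\H$), together with the continuity of the conditionals $v\mapsto \mu_{H^-(v)}$ under transverse variation (recalled in Section \ref{2}), ensures that $\rho_\varepsilon$ and $\sigma_u$ are close in the weak topology as $\varepsilon\to 0$. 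I would then invoke mixing of $g^t$ against $m^{ps}$: for $\phi,\psi\in C_c(T^1M)$,
\[
\lim_{t\to+\infty}\int\phi(g^{-\log t}v)\,\psi(v)\,dm^{ps}(v)=\frac{1}{m^{ps}(T^1M)}\left(\int\phi\,dm^{ps}\right)\left(\int\psi\,dm^{ps}\right).
\]
Applied with $\phi=f$ and $\psi$ a continuous approximation of $m^{ps}(\mathcal{B}_\varepsilon)^{-1}\mathbf{1}_{\mathcal{B}_{\varepsilon}}$, this yields for each fixed $\varepsilon>0$ the convergence $\int f\circ g^{-\log t}\,d\rho_\varepsilon\to m^{ps}(T^1M)^{-1}\int f\,dm^{ps}$ as $t\to+\infty$; letting $\varepsilon\to 0$ afterwards recovers the claim for $\sigma_u$.

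The main obstacle is making the dethickening uniform: one must bound the difference $\int f\circ g^{-\log t}\,d\rho_\varepsilon-\int f\circ g^{-\log t}\,d\sigma_u$ by a quantity that goes to zero with $\varepsilon$ \emph{uniformly in $t$}. This is possible because $g^{-\log t}$ contracts the strong unstable and commutes with the geodesic direction, so the $\varepsilon$-thickening is compatible with the dynamics; combined with the transverse H\"older regularity of $v\mapsto \mu_{H^-(v)}$ and an equicontinuity argument for $f\circ g^{-\log t}$ on $\mathcal{B}_\varepsilon$ analogous to the one used in the third step of the proof of Theorem \ref{magic-equivalent}, this yields the required uniform estimate. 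The hypothesis $\tau(u)>0$ is precisely what guarantees $m^{ps}(\mathcal{B}_\varepsilon)>0$ and makes the local product decomposition meaningful.
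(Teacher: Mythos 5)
First, a point of reference: the paper does not prove this statement at all; it is quoted as Babillot's theorem (\cite[Thm 3]{mbab2}) and used as a black box, so there is no internal proof to compare with. Your route --- deduce it from mixing of the geodesic flow for $m^{ps}$ by a Margulis-type thickening using the local product structure --- is indeed the standard (and essentially Babillot's own) way to prove such statements, and your first step (the change of variables reducing the claim to the weak convergence of $(g^{-\log t})_*\sigma_u$, with denominator $t^\delta\tau(u)$) is correct.

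There is, however, a genuine gap in the thickening step. You assert that ``the hypothesis $\tau(u)>0$ is precisely what guarantees $m^{ps}(\mathcal{B}_\varepsilon)>0$''. This is false. The transverse mass of your box $\mathcal{B}_\varepsilon$ is governed by the Patterson measure $\nu_o$ near the \emph{forward} endpoint $u^+$ (the strong unstable plaques through points of $B_{H^-}(u,1)$ carry the conditionals built from $d\nu_o(v^+)$), whereas $\tau(u)>0$ only says that the \emph{backward} endpoints of $B_{H^-}(u,1)$ meet $\Lambda$ in a set of positive $\nu_o$-measure. Since $\tau$ is positive on a whole neighbourhood of $\Omega$, the hypothesis $\tau(u)>0$ allows $u^+\notin\Lambda$, i.e.\ $u\notin\mathcal{E}$; for such $u$ and for $\varepsilon$ smaller than the unstable distance from $H^-(u)$ to the set $\{v:\,v^+\in\Lambda\}$, one has $m^{ps}(\mathcal{B}_\varepsilon)=0$, the probability $\rho_\varepsilon$ is undefined, and the argument collapses exactly in the case the hypothesis is designed to cover.

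The repair is standard but must be said: thicken by a \emph{fixed} (possibly large) radius $r_0$ in the strong unstable direction, chosen so that the weak-unstable saturation of $B_{H^-}(u,1)$ has positive $m^{ps}$-measure (possible since $\Lambda$ is infinite), and keep only the flow-direction thickening of size $\varepsilon$. This costs nothing asymptotically because $g^{-\log t}$ contracts strong unstable displacements by the factor $1/t$, so the dethickening error is controlled by the modulus of continuity of $f$ at scale $\varepsilon+r_0/t$, uniformly for $t$ large; and there is no loss in the normalization because the strong unstable holonomy preserves the conditionals $\mu_{H^-}$ \emph{exactly} (the density $e^{\delta\beta_{v^-}(o,\pi(v))}d\nu_o(v^-)$ depends only on $(v^-,\beta_{v^-}(o,\pi(v)))$, which holonomy leaves unchanged), while the flow-direction holonomy only introduces a factor $e^{\pm\delta\varepsilon}$. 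This exact holonomy invariance, rather than a qualitative ``transverse H\"older regularity'', is the correct tool in your uniform dethickening estimate. Two further small points should be addressed when writing this up: one needs $m^{ps}(\partial\mathcal{B})=0$ (or a two-sided continuous approximation of $\mathbf{1}_{\mathcal{B}}$) before invoking mixing, and $\mu_{H^-(u)}(\partial B_{H^-}(u,1))=0$ (available from Roblin, as quoted in the proof of Theorem \ref{equidistribution-PS}) to handle plaques cut by the boundary of the ball.
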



\section{From the plane to the space of Horocycles}
\label{Relahoro}

Let us describe the relation between the linear action of a discrete subgroup $\Gamma_0$ 
of $G_0=\SL(2,\R)$ on $\R^2$ and the action of the horocyclic flow on the quotient space 
$\Gamma\backslash \PSL(2,\R)$, where $\Gamma$ is the image of $\Gamma_0$ for the quotient map
 $G_0\rightarrow G$. Since we assume that $-I \in\Gamma_0$, $\Gamma_0$ is also the preimage of $\Gamma$.


\subsection{Linear action of $\SL(2,\R)$ on $\R^2\setminus\{0\}$} \label{defpsi}
The relation described below was already in \cite{led}.
We will borrow notations and results of \cite{MW}, 
where the reader will find a more detailled presentation of the following objects, 
and proofs of their properties.
 Fix $\vu_0=\left(\begin{array}{c} 1 \\ 0 \end{array} \right)$.

Recall that $\SL(2,\R)$ is diffeomorphic to $K\times A\times N$, 
where $K=SO(2,\R)$, $A=\{\left(\begin{array}{cc} e^t & 0 \\ 0 & e^{-t} \end{array}\right),\,t\in\R\}$
and $N=\{\left(\begin{array}{cc} 1 & s \\ 0 & 1 \end{array}\right),\,s\in\R\}$, 
and that the stabilizer of $\vu_0$ in $\SL(2,\R)$ is equal to $N$. 
Thus, there is a natural identification of $\R^2\setminus \{0\}$ with $\SL(2,\R)/N$, 
and a natural section $\Psi:\R^2\setminus\{0\} \to \SL(2,\R)$
which associates to $\vu=k.a.\vu_0$ the matrix $k.a\in \SL(2,\R)$.
 It can be written explicitely as 
$$
\Psi(\vu)=
\left(\begin{array}{cc} \vu_x & -\vu_y/(\vu_x^2+\vu_y^2) \\ \vu_y & \vu_x/(\vu_x^2+\vu_y^2) \end{array} \right).
$$
This is a continuous section, in the sense that 
 for any $\vv\in \R^2$, $\vv=\Psi(\vv)\vu_0$.

 The section $\Psi$ satisfies
 \eq{propPsi}{
 \Psi(e^t\vv)=\Psi(\vv)a_{2t}.
}

Geometrically, in the natural identification of  $T^1\H$ with $\PSL(2,\R)$ which associates to $g\in \PSL(2,\R)$ 
the vector $g.u_0$, where $u_0$ is the unit vector $i$ with basepoint $i$, if $\vu=r_\theta. \vu_0$ has coordinates 
$(\cos\theta,\sin\theta)$,  
the image of $\Psi(\vu)$ in $T^1\H=\PSL(2,\R)$ 
is the unit vector with base point $i$ and with coordinates $(\sin 2\theta, \cos 2\theta)$.
If $\vu=r_\theta.a_t.\vu_0$, the image of $\Psi(\vu)$ in $T^1\H$ is the image 
of $\Psi(r_\theta.\vu_0)$ under the geodesic flow $g^{2t}$.


\subsubsection*{Identification of $\mathcal{H}$ with $\R^2\setminus\{0\}/\pm$}

Define a map from $\R^2\setminus\{0\}$ to $\mathcal{H}$ by
$$
\vv \in\R^2\setminus\{0\} \to (\Psi(\vv)(\infty),2 \log |\vv|)\in S^1\times \R\simeq \mathcal{H}\,,
$$ 
where $|.|$ is the standard euclidean norm on $\R^2$. 
This map is $\SL(2,\R)$-equivariant, for the linear action of $\SL(2,\R)$ on $\R^2\setminus\{0\}$, and 
the natural action of $\SL(2,\R)$ on $\mathcal{H}$ induced by the left-multiplication on $\PSL(2,\R)$, 
as described in \S \ref{2}. 
Moreover, this map induces a bijective map from $\R^2\setminus\{0\}/\pm$ 
to $\mathcal{H}$, whose inverse is
$$
\Phi: (\xi,t)\mapsto  \pm(e^{t/2}\sin(\theta),e^{t/2}\cos(\theta))\,,
$$ 
 where $\theta$ is such that 
$$
\left(\begin{array}{cc} \sin(\theta) & -\cos(\theta)\\
\cos(\theta)& \sin(\theta) \end{array}\right) (\infty)=\xi,
$$
or equivalently $\tan(\theta)=\xi \in \partial \H = \R\cup\{\infty\}$. 
These maps allow us to identify $\mathcal{H}$ with $\R^2\setminus\{0\}/\pm$.

\subsubsection*{Duality on measures}
Now, we want to use this identification to describe the $\Gamma$-invariant measure $\hat{\mu}$ on $\mathcal{H}$ 
as a measure $\bar{\mu}$ on $\R^2\setminus\{0\}$ in polar coordinates. 
Write $\R^2\setminus\{0\}$ as $S^1\times \R_+^*$. 
There is a unique symmetric probability on $S^1$, denoted by  $\bar{\nu}_o$, whose image on $\partial \H$ is $\nu_o$, 
simply obtained by giving equal weights locally to the 2 sheets of the covering map $S^1\rightarrow \partial \H$, $(\sin \theta,\cos \theta)\mapsto \tan \theta$. 

Thus, via the bijection $\Phi$ between $\mathcal{H}$ and $\R^2\setminus\{0\}/\pm$, 
a symmetric continuous function $f$ on $\R^2\setminus \{0\}$ can be considered 
as a function $f\circ \Phi$ on $\mathcal{H}$, and with $r=e^{t/2}$ and $\theta$ such that $\tan\theta=\xi$, 
as $d\hat{\mu}(\xi,t)=e^{\delta t}dt\,d\nu_o(\xi)$, integration against $\hat{\mu}$ is given in polar coordinates on $\R^2\setminus\{0\}$  by
$$
\int_{\mathcal{H}} f\circ\Phi(\xi,t) d\hat{\mu} =
\int_{0}^{2\pi}\int_0^\infty f(r\sin\theta,r\cos\theta)2r^{2\delta-1}drd\bar{\nu}_o(\theta).
$$

When $S$ has finite volume, we have $\delta=1$, $d\bar{\nu}_o(\theta)=\frac{d\theta}{2\pi}$, 
and $\bar{\mu}=\frac{\mbox{Leb}}{\pi}$, where Leb is the usual Lebesgue measure on $\R^2$. 
Moreover, using the expressions of $m^{ps}$, $m$, $\mu_{H^-}$, in terms of $\nu_o$ and 
the Busemman cocycle, and as $\lambda_{H^-}$ corresponds to the parametrization of the horocyclic flow, 
we can see that $d\lambda_{H^-}(v)=ds=\pi d\mu_{H^-}(v)$, $m=\pi m^{ps}$, and $\mbox{Liouv}=\pi^2 m^{ps}$, 
where $\mbox{Liouv}$ is the measure corresponding to $\mbox{Leb}$ in this duality. 

\subsubsection*{The cocycle $c$} For all $g\in \SL(2,\R)$ and $\vu\in\R^2\setminus\{0\}$, 
$\Psi(g\vu)$ and $g\Psi(\vu)$ 
lie on the same strong stable horocycle $\Phi^{-1}(g\vu)=g\Phi^{-1}(\vu)$. 
Therefore, we can define a cocycle $c_\vu(g)$ by the implicit equation
 $$
n_{c_\vu(g)}=\Psi(g\vu)^{-1}g\Psi(\vu).
$$ 
  It satisfies
  \eq{propc1}{
  c_{\vu_0}(gn_s)=c_{\vu_0}(g)+s, \, \, \, c_{\vu_0}(ga_t)=e^{-t} c_{\vu_0}(g),
  }
and 

\eq{propc2}{
c_{\vu}(g)=c_{\vu_0}(g\Psi(\vu)).
}

\subsubsection*{Lift of a map from $\R^2\setminus\{0\}$ to $T^1\H=\PSL(2,\R)$}

Given a continuous, symmetric map with compact support $f:\R^2\setminus\{0\}\to \R$, we want to define 
continuous  maps $\tilde{f}:T^1\H\to \R$ and $\bar{f}: \Gamma\backslash\PSL(2,\R)\to\R$ in such a way 
that $\int_{T^1M}\bar{f} \,dm=\int_{\R^2}f\,d\bar{\mu}$. 

Fix a non-negative function $\phi$, vanishing outside $[-1,1]$, 
such that $\int_\R \phi(t) dt=1$. 

To a symmetric $f$, we associate the function on $G=T^1\H$
 $$
\tilde{f}(g)=f(g\vu_0)\phi(c_{\vu_0}(g)),
$$
 and the function on $\Gamma\backslash G=T^1S$,
 $$
\bar{f}(\Gamma g)=\sum_{\gamma \in \Gamma} \tilde{f}(\gamma g),
$$
 which is continuous and compactly supported.

We have
\begin{eqnarray*}
\int_{T^1S} \bar{f}(\Gamma g)\,dm(\Gamma g)&=&\int_{T^1\H}\tilde{f}( g)\,d\widetilde{m}(g)=
\int_{\mathcal{H}} f(g\vu_0)\int_{\R}\phi(c_{\vu_0(g)})ds d\bar{\mu}\\
&=&  \int_{\mathcal{H}} f\,d\hat{\mu}=\int_{\R^2} f\,d\bar{\mu}
\end{eqnarray*}
where we write $\tilde{m}$ for the $\Gamma$-invariant lift of the Burger-Roblin measure $m$.

Given a symmetric function $f:\R^2\setminus\{0\} \rightarrow \R$, of compact support, 
and $\vu \in \R^2\setminus\{0\}$, we define the following quantities:
$$
R^{(\vu)}(f)=\sup_{\vv \in \supp f} \vv\star\vu, \; \;
 r^{(\vu)}(f)=\inf_{\vv \in \supp f} \vv\star \vu, \; \; 
v^{(\vu)}(f)=\frac{R^{(u)}(f)}{r^{(u)}(f)},
$$ 
that can be geometrically interpreted as the radii of the smallest annulus 
containing the support of $f$, and their ratio, with respect to the level sets
 of the proper map $\vv\mapsto \vv \star \vu$. 
We will also need the following
$$
D(\vu,f)= \sup_{\vv \in \supp(f)} ||\Psi(\vv)\Psi(\vu)^{-1}||,
$$
which satisfies
$$
D(\vu,f)\leq c_1 \sup_{\vv \in \supp(f)} \max\left( \frac{|\vv|}{|\vu|},\frac{|\vu|}{|\vv|} \right),
$$
for some constant $c_1>0$.


\subsubsection*{Strategy of the proof, and key lemmas}

Heuristically, the proof of theorems about the distribution of nondiscrete 
$\Gamma$-orbits on $\R^2$ is based on the fact 
that it is possible to relate the sum $\sum_{\gamma\in\Gamma_T} f(\gamma\vu)$ 
to an integral of $\bar{f}$ along a certain piece of the horocycle 
$\Gamma\Phi^{-1}(\vu)$ on $\Gamma\backslash\PSL(2,\R)$, and then to
 use the equidistribution properties of the horocyclic flow on 
$\Gamma\backslash\PSL(2,\R)$ (theorem \ref{magic-equivalent}) to get the desired 
equivalent or the desired limit. 

A key observation (see lemma 2.1 of \cite{MW}, but also \cite{led}
 lemma 3 for the usual norm)
 is that if $f:\R^2\setminus\{0\}$ is continuous with compact support,
 $\vu\in\R^2\setminus\{0\}$, and
$g\in G$ is such that $g\vu $ belongs to the support of $f$, then 
$$
{\bigg |} \|g\| -|c_{\vu}(g)(g\vu\star\vu)| {\bigg |} \le D=D(\vu,f)\,.
$$

This allows (see  Lemma 3.1 of \cite{MW}) to get the following lemma. 

\begin{lem} \label{lemme31MW}
Let $\vu \in \R^2\setminus \{0\}$, $f$ a symmetric, nonnegative,
 continuous function compactly supported on $\R^2\setminus \{0\}$.
Then for all $\gamma \in\Gamma_0$,
\eq{minoration1}{ 
||\gamma||\leq T \Rightarrow \int_{-(1+(T+D)/r)}^{1+(T+D)/r} \tilde{f}(\gamma \Psi(\vu) n_s)ds = f(\gamma\vu),
}
\eq{majoration1}{ 
||\gamma||\geq T \Rightarrow \int_{-((T-D)/R-1)}^{(T-D)/R-1} \tilde{f}(\gamma \Psi(\vu) n_s)ds = 0,
}
where $D=D(\vu,f)$, $R=R^{(\vu)}(f)$ and $r=r^{(\vu)}(f)$.
\end{lem}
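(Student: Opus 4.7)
\emph{Step 1: reduce to a simple integrand.} I would start by unwinding $\tilde f(\gamma\Psi(\vu)n_s)=f(\gamma\Psi(\vu)n_s\vu_0)\,\phi(c_{\vu_0}(\gamma\Psi(\vu)n_s))$. Since $N$ is the stabilizer of $\vu_0$, one has $n_s\vu_0=\vu_0$ and hence $\gamma\Psi(\vu)n_s\vu_0=\gamma\Psi(\vu)\vu_0=\gamma\vu$. Combining the cocycle identities (\ref{eq: propc1}) and (\ref{eq: propc2}) gives
$$c_{\vu_0}(\gamma\Psi(\vu)n_s)=c_{\vu_0}(\gamma\Psi(\vu))+s=c_\vu(\gamma)+s,$$
so $\tilde f(\gamma\Psi(\vu)n_s)=f(\gamma\vu)\,\phi(c_\vu(\gamma)+s)$. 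The $s$-support of this function is the translated interval $[-1,1]-c_\vu(\gamma)$, and the full integral over $\R$ equals $f(\gamma\vu)\int_\R\phi=f(\gamma\vu)$.

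\emph{Step 2: locate $c_\vu(\gamma)$.} If $f(\gamma\vu)=0$ both claims are automatic, so I may assume $\gamma\vu\in\supp f$; then by definition of $r$ and $R$, one has $r\le \gamma\vu\star\vu\le R$. The key observation stated just above the lemma,
$\bigl|\,\|\gamma\|-|c_\vu(\gamma)|(\gamma\vu\star\vu)\,\bigr|\le D$,
then yields the two-sided bound
$$\|\gamma\|-D\le |c_\vu(\gamma)|\,(\gamma\vu\star\vu)\le \|\gamma\|+D.$$

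\emph{Step 3: match intervals.} For (\ref{eq: minoration1}), the hypothesis $\|\gamma\|\le T$ together with $\gamma\vu\star\vu\ge r$ forces $|c_\vu(\gamma)|\le(T+D)/r$; the translated support $[-1,1]-c_\vu(\gamma)$ therefore lies inside $[-(1+(T+D)/r),\,1+(T+D)/r]$, and the integral collapses to the full integral $f(\gamma\vu)$. For (\ref{eq: majoration1}), the hypothesis $\|\gamma\|\ge T$ together with $\gamma\vu\star\vu\le R$ gives $|c_\vu(\gamma)|\ge (T-D)/R$; depending on the sign of $c_\vu(\gamma)$, the translated support is pushed entirely to the left of $1-(T-D)/R$ or to the right of $(T-D)/R-1$, hence is disjoint from the interval of integration $[-((T-D)/R-1),\,(T-D)/R-1]$, and the integral vanishes.

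\emph{Expected difficulty.} There is no genuine obstacle here: everything is algebraic manipulation once Step 1 has turned the integrand into $f(\gamma\vu)\,\phi(c_\vu(\gamma)+s)$. The only point needing care is the sign analysis in Step 3 for (\ref{eq: majoration1}), where the two subcases $c_\vu(\gamma)\ge (T-D)/R$ and $c_\vu(\gamma)\le -(T-D)/R$ must each be handled separately to conclude disjointness, and one tacitly uses $T\ge R+D$ so that the interval of integration is non-degenerate.
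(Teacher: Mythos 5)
Your proof is correct and follows exactly the route the paper intends: the paper does not spell out the argument (it cites Lemma 3.1 of \cite{MW}), but the deduction is precisely what you do — rewrite $\tilde{f}(\gamma\Psi(\vu)n_s)=f(\gamma\vu)\,\phi(c_\vu(\gamma)+s)$ via (\ref{eq: propc1})--(\ref{eq: propc2}), and then locate $c_\vu(\gamma)$ using the key inequality $\bigl|\,\|\gamma\|-|c_\vu(\gamma)|(\gamma\vu\star\vu)\,\bigr|\le D$ stated just before the lemma. Your closing caveats (the single boundary point of possible overlap, which has measure zero, and the implicit requirement that $T$ be large enough for the second interval to be nondegenerate) are accurate and harmless.
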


 This implies the following estimate of the sums over $\Gamma_T$ 
by ergodic averages along the horocycles.
 
 \begin{lem} \label{estimeelemme}
 With the same notations as in the Lemma \ref{lemme31MW},
 \eq{majoration2}{
 \sum_{\gamma \in \Gamma_T} f(\gamma \vu) \leq 2\int_{-(1+(T+D)/r)}^{1+(T+D)/r} \bar{f}(h^s u)ds,
 }
 \eq{minoration2}{
 \sum_{\gamma \in \Gamma_T} f(\gamma \vu) \geq 2\int_{-((T-D)/R-1)}^{(T-D)/R-1} \bar{f}(h^s u)ds.
 }
 \end{lem}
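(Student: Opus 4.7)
The proof is a bookkeeping consequence of Lemma \ref{lemme31MW} plus non-negativity of $\tilde f$. My plan is to sum the two one-element identities of that lemma over $\Gamma$ and compare with the global sum that defines $\bar f$, while carefully tracking the factor $2$ coming from $-I \in \Gamma_0$.

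First I would re-express $\tilde f$ at the point $\gamma\Psi(\vu)n_s$ to make the integrands fully transparent. Since $N$ fixes $\vu_0$, and the cocycle satisfies $c_{\vu_0}(gn_s)=c_{\vu_0}(g)+s$ together with $c_{\vu}(g)=c_{\vu_0}(g\Psi(\vu))$, one finds
\[
\tilde f(\gamma\Psi(\vu)n_s)\;=\;f(\gamma\vu)\,\phi\!\bigl(c_{\vu}(\gamma)+s\bigr),
\]
so that $\int_{\mathbb R}\tilde f(\gamma\Psi(\vu)n_s)\,ds=f(\gamma\vu)$. This makes (\ref{eq: minoration1}) the statement that the support of $s\mapsto\phi(c_{\vu}(\gamma)+s)$ lies inside the large interval whenever $\|\gamma\|\le T$, and (\ref{eq: majoration1}) the statement that this support is disjoint from the small interval whenever $\|\gamma\|\ge T$.

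Next I note that because $-I\in\Gamma_0$ and $f$ is symmetric, the map $\Gamma_0\to\Gamma=\Gamma_0/\{\pm I\}$ is two-to-one and $f(\gamma\vu)=f(-\gamma\vu)$, so writing $\bar\Gamma_T$ for the image of $\Gamma_T$ in $\Gamma$,
\[
\sum_{\gamma\in\Gamma_T}f(\gamma\vu)\;=\;2\sum_{\gamma\in\bar\Gamma_T}f(\gamma\vu).
\]
For the upper bound (\ref{eq: majoration2}) I sum (\ref{eq: minoration1}) over $\gamma\in\bar\Gamma_T$, interchange sum and integral (both are non-negative), and use that $\bar\Gamma_T\subset\Gamma$ together with $\tilde f\ge 0$:
\[
\sum_{\gamma\in\bar\Gamma_T}f(\gamma\vu)
=\int_{-(1+(T+D)/r)}^{1+(T+D)/r}\sum_{\gamma\in\bar\Gamma_T}\tilde f(\gamma\Psi(\vu)n_s)\,ds
\le\int_{-(1+(T+D)/r)}^{1+(T+D)/r}\bar f(h^s u)\,ds,
\]
since $\bar f(\Gamma\Psi(\vu)n_s)=\sum_{\gamma\in\Gamma}\tilde f(\gamma\Psi(\vu)n_s)$ and $h^s u=\Gamma\Psi(\vu)n_s$. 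Multiplying by $2$ gives (\ref{eq: majoration2}).

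For the lower bound (\ref{eq: minoration2}) I expand $\bar f$ and use (\ref{eq: majoration1}) to discard the tail:
\[
\int_{-((T-D)/R-1)}^{(T-D)/R-1}\bar f(h^s u)\,ds
=\sum_{\gamma\in\Gamma,\ \|\gamma\|\le T}\int_{-((T-D)/R-1)}^{(T-D)/R-1}\tilde f(\gamma\Psi(\vu)n_s)\,ds
\le\sum_{\gamma\in\bar\Gamma_T}\int_{\mathbb R}\tilde f(\gamma\Psi(\vu)n_s)\,ds
=\sum_{\gamma\in\bar\Gamma_T}f(\gamma\vu),
\]
again using non-negativity to enlarge the interval of integration. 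Multiplying by $2$ yields (\ref{eq: minoration2}).

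There is no real obstacle here beyond keeping track of the double cover $\Gamma_0\to\Gamma$: the entire content of the lemma is Fubini applied to the pointwise identities already established in Lemma \ref{lemme31MW}, with positivity of $\tilde f$ allowing the inequalities to go in the right direction in each of the two cases.
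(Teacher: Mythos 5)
Your proof is correct and follows essentially the same route as the paper: sum the pointwise identities of Lemma \ref{lemme31MW} over the group, compare with the sum over all of $\Gamma$ defining $\bar f$ using $\tilde f\ge 0$, and account for the factor $2$ from the double cover $\Gamma_0\to\Gamma$ (the paper phrases this with sums over $\Gamma_T\subset\Gamma_0$ rather than over the image $\bar\Gamma_T\subset\Gamma$, which is only a cosmetic difference). Your explicit rewriting $\tilde f(\gamma\Psi(\vu)n_s)=f(\gamma\vu)\,\phi(c_{\vu}(\gamma)+s)$ is a nice clarification but does not change the argument.
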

 \begin{proof}
  Summing the equality (\ref{eq: minoration1}) over $\Gamma_T$, we have
$$ 
\sum_{\gamma \in \Gamma_T} f(\gamma \vu) = 
\int_{-(1+(T+D)/r)}^{1+(T+D)/r} \sum_{\gamma \in \Gamma_T} \tilde{f}(\gamma\Psi(\vu)n_s)ds
 \leq 2\int_{-(1+(T+D)/r)}^{1+(T+D)/r} \bar{f}(h^s u)ds.
$$
 The factor $2$ in the previous equation comes from the fact that 
there are two elements $\pm\gamma$ of $\Gamma_0\subset\SL(2,\R)$ 
in the class of an element $\gamma \in \Gamma\subset \PSL(2,\R)$. 
 This proves (\ref{eq: majoration2}). For (\ref{eq: minoration2}), consider
 \begin{align*}
 \int_{-((T-D)/R-1)}^{(T-D)/R-1} \bar{f}(h^s u)ds
 &=\sum_{\gamma \in \Gamma} \int_{-((T-D)/R-1)}^{(T-D)/R-1} \tilde{f}(\gamma\Psi(\vu)n_s)ds\\
 &= \frac12 \sum_{\gamma \in \Gamma_T} \int_{-((T-D)/R-1)}^{(T-D)/R-1} \tilde{f}(\gamma\Psi(\vu)n_s)ds\\
 \end{align*}
 because of (\ref{eq: majoration1}), the factor $1/2$ coming from the map $\Gamma_0\rightarrow \Gamma$. 
Thus, (\ref{eq: minoration2}) is a consequence of the fact 
that for all $t>0$, $\int_{-t}^{t} \tilde{f}(\gamma \Psi(\vu) n_s)ds \leq f(\gamma \vu)$.
 \end{proof} 
 
  We now take into account the scaling by $T^\alpha$, 
and use the equidistribution Theorem \ref{magic-equivalent}.
  Let $\alpha \in (-1,1)$, and $\eta>0$ be a small parameter. For $f$ a continuous map with compact support on $\R^2\setminus\{0\}$, 
introduce the following quantity\,:
$$
I(\alpha,f,T,\vu)=\frac{\sum_{\gamma \in \Gamma_T} f\left(\frac{\gamma \vu}{T^\alpha}\right)}{T^{(1+\alpha)\delta}},
$$

 \begin{lem} 
 Take the same notations as in Lemma \ref{lemme31MW}. If $\Gamma$ is convex-cocompact and $\alpha\in (-1,1)$, or if $\Gamma$ is geometrically finite 
and $\alpha=0$, then the quantity
 $ 
I(\alpha,f,T,\vu)
$
satisfies 
\eq{minoration3}{
\liminf_{T\rightarrow +\infty} \frac{I(\alpha,f,T,\vu)}{\tau(g^{(1-\alpha)\log T - \log R- \eta}\Gamma\Psi(u))} \geq
 \frac{2e^{-\eta}}{v^{(\vu)}(f)m^{ps}(T^1S)} \int_{\R^2} \frac{f(\vv)}{(\vv\star \vu)^\delta}d\bar{\mu}(\vv)  ,
 }
and 
\eq{majoration3}{
 \limsup_{T\rightarrow +\infty} \frac{I(\alpha,f,T,\vu)}{\tau(g^{(1-\alpha)\log T - \log r +\eta}\Gamma\Psi(u))} 
 \leq \frac{2e^{\eta} v^{(\vu)}(f)}{m^{ps}(T^1S)} \int_{\R^2} \frac{f(\vv)}{(\vv\star \vu)^\delta}d\bar{\mu}(\vv).  
}
 \end{lem}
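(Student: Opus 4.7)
The plan is to apply Lemma \ref{estimeelemme} to the rescaled function $f_\alpha(\vv):=f(\vv/T^\alpha)$, then feed the resulting horocyclic integrals into Theorem \ref{magic-equivalent}, and finally translate the $\mu_{H^-}$-volumes of balls into values of $\tau$ via the identity
$
t^\delta\tau(g^{\log t}u)=\mu_{H^-}(B_{H^-}(u,t)).
$
The rescaling interacts nicely with the objects involved: a quick computation gives $r^{(\vu)}(f_\alpha)=T^\alpha r$, $R^{(\vu)}(f_\alpha)=T^\alpha R$ and $D(\vu,f_\alpha)\le c\, T^\alpha$, while a polar change of variable in $d\bar\mu=2\rho^{2\delta-1}d\rho\,d\bar\nu_o$ yields $\int_{\R^2}f_\alpha\,d\bar\mu=T^{2\alpha\delta}\int_{\R^2}f\,d\bar\mu$. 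In particular $\int_{T^1S}\bar{f_\alpha}\,dm=T^{2\alpha\delta}\int f\,d\bar\mu$.

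First I would write down the two inequalities of Lemma \ref{estimeelemme} for $f_\alpha$, observing that the integration bounds satisfy
$
t_\alpha^{+}:=1+(T+D_\alpha)/r_\alpha\le T^{1-\alpha}e^{\eta'}/r,\quad t_\alpha^{-}:=(T-D_\alpha)/R_\alpha -1\ge T^{1-\alpha}e^{-\eta'}/R,
$
for $T$ large and any $\eta'>0$ (this uses $\alpha\in(-1,1)$ so that $D_\alpha/T\to 0$). Next, I would apply Theorem \ref{magic-equivalent} to $\bar{f_\alpha}$: for any $\epsilon>0$ and $T$ large,
\begin{equation*}
\int_{-t}^{t}\bar{f_\alpha}(h^s u)\,ds=(1+o_\epsilon(1))\,\mu_{H^-}(B_{H^-}(u,t))\,\frac{\int_{T^1S}\bar{f_\alpha}\,dm}{m^{ps}(T^1S)}.
\end{equation*}
The point of rewriting the prefactor as $\mu_{H^-}(B_{H^-}(u,t))$ rather than $t^\delta\tau(g^{\log t}u)$ is that the former is monotone in $t$, whereas $\tau$ oscillates; this monotonicity lets me replace $t_\alpha^{\pm}$ by the clean bounds $T^{1-\alpha}e^{\pm\eta'}/r$ or $\ldots/R$ above in the upper and lower estimates respectively.

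Once that is done, expanding $\mu_{H^-}(B_{H^-}(u,T^{1-\alpha}e^{\eta'}/r))=T^{(1-\alpha)\delta}e^{\eta'\delta}r^{-\delta}\tau(g^{(1-\alpha)\log T-\log r+\eta'}u)$ and analogously for the lower side, dividing by $T^{(1+\alpha)\delta}$ and using the scaling $\int\bar{f_\alpha}\,dm=T^{2\alpha\delta}\int f\,d\bar\mu$, the factors $T^{(1-\alpha)\delta}\cdot T^{2\alpha\delta}/T^{(1+\alpha)\delta}=1$ cancel cleanly and one is left with
\begin{equation*}
\frac{I(\alpha,f,T,\vu)}{\tau(g^{(1-\alpha)\log T-\log r+\eta'}\Gamma\Psi(\vu))}\le \frac{2(1+\epsilon)e^{\eta'\delta}}{r^\delta\, m^{ps}(T^1S)}\int_{\R^2} f(\vv)\,d\bar\mu(\vv),
\end{equation*}
and the symmetric lower inequality with $R$ in place of $r$ and $-\eta'$ in place of $\eta'$. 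Finally, since $f$ is supported where $r\le\vv\star\vu\le R$, one has $r^\delta\int f/(\vv\star\vu)^\delta d\bar\mu\le\int f\,d\bar\mu\le R^\delta\int f/(\vv\star\vu)^\delta d\bar\mu$; combining with $R^\delta/r^\delta=v^{(\vu)}(f)^\delta\le v^{(\vu)}(f)$ (using $\delta\le 1$) and taking $\eta=\eta'\delta$ and $\epsilon\to 0$ yields both inequalities of the lemma.

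The main obstacle, as alluded to above, is the non-monotonicity of $\tau$: a naive application of Theorem \ref{magic-equivalent} directly to $t_\alpha^{\pm}$ would produce values of $\tau$ at points one cannot easily compare to $g^{(1-\alpha)\log T-\log r+\eta}u$. Re-expressing the asymptotic in terms of the ball-volume $\mu_{H^-}(B_{H^-}(u,t))$ — which is manifestly monotone in $t$ — is the key trick; once this is in place, the rest is a careful bookkeeping of the change of variables and of the $v^{(\vu)}(f)$ factor coming from comparing $\int f\,d\bar\mu$ with $\int f(\vv)(\vv\star\vu)^{-\delta}d\bar\mu$.
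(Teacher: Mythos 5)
Your argument for $\alpha=0$ is essentially the paper's: apply Lemma \ref{estimeelemme}, control the integration bounds, invoke Theorem \ref{magic-equivalent} at the fixed base point $u=\Gamma\Psi(\vu)$, and convert $\mu_{H^-}$-masses of balls into $\tau$; the bookkeeping with $v^{(\vu)}(f)$ and $\delta\le 1$ is fine. But for $\alpha\neq 0$ there is a genuine gap at the central step. You transfer the rescaling onto the test function, $f_\alpha(\vv)=f(\vv/T^\alpha)$, and then write
$\int_{-t}^{t}\bar{f_\alpha}(h^su)\,ds=(1+o_\epsilon(1))\,\mu_{H^-}(B_{H^-}(u,t))\,\frac{\int\bar{f_\alpha}\,dm}{m^{ps}(T^1S)}$
``for $T$ large''. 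Theorem \ref{magic-equivalent} is a limit statement for a \emph{fixed} compactly supported function (uniform, in the convex-cocompact case, only in the base point $u\in\Omega$). Here both the time $t\asymp T^{1-\alpha}$ and the function $\bar{f_\alpha}$ vary with $T$: the support of $\bar{f_\alpha}$ in $T^1S$ is (up to the $\phi$-thickening) the set $\{\Gamma\Psi(\vw)h^{s'}g^{2\alpha\log T}:\vw\in\supp f,\ |s'|\le T^{2\alpha}\}$, which grows and moves with $T$, so no compactness or equicontinuity argument lets you extract a $o_\epsilon(1)$ that is uniform over this family. Nothing in Theorem \ref{magic-equivalent} (or Theorem \ref{equidistribution-PS}) gives uniformity in the test function, so the coupled limit $T\to\infty$ cannot be concluded as written.

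The paper avoids exactly this by doing the dual move: it keeps $\bar f$ fixed and rescales the vector, $\vu_T=\vu/T^\alpha$, so that the horocyclic integral is taken at the moving base point $g^{-2\alpha\log T}\Gamma\Psi(\vu)$; then it invokes the convergence of Theorem \ref{magic-equivalent} \emph{uniformly in the base point on $\Omega$} (available only in the convex-cocompact case), corrects for the fact that $g^{-2\alpha\log T}\Gamma\Psi(\vu)$ need not lie in $\Omega$ by writing $\Gamma\Psi(\vu)=h^{s_0}u_0$ with $u_0\in\Omega$ and absorbing the interval shift $s_0T^{2\alpha}$ (harmless since $\alpha\in(-1,1)$), and finally compares $\tau(g^{\cdot}\Gamma\Psi(\vu))$ with $\tau(g^{\cdot}u_0)$ by uniform continuity of $\tau$ near $\Omega$. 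This is precisely why the lemma restricts $\alpha\neq0$ to convex-cocompact groups; your proposal never uses that hypothesis, which is a sign the uniformity issue has been glossed over. To repair your route you would have to convert $\int\bar{f_\alpha}(h^su)\,ds$ back into $\int\bar f(h^sg^{-2\alpha\log T}u)\,ds$ (or prove a version of Theorem \ref{magic-equivalent} uniform over the family $\{\bar{f_\alpha}\}$), at which point you are doing the paper's argument. The monotonicity remark about $\mu_{H^-}(B_{H^-}(u,t))$ is a nice observation but only handles the harmless replacement of the integration bounds, not this uniformity problem; also note $D(\vu,f_\alpha)=O(T^{|\alpha|})$, not $O(T^{\alpha})$.
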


In the geometrically finite case, the lack of compacity of $\Omega$ does not allow to get a uniform convergence in theorem \ref{magic-equivalent}, 
and therefore we are not able to rescale our estimates through the parameter $\alpha$ (see the proof below for details). 

 \begin{proof}
  Put $\vu_T=\vu/T^\alpha$. 
Then $\Psi(\vu_T)=\Psi(\vu)a_{-2\alpha\log T}$, $R^{(\vu_T)}(f)=R/T^\alpha$, $r^{(\vu_T)}(f)=r/T^\alpha$, and 
  as $T$ goes to infinity, $D(\vu_T,f)=O(T^{|\alpha|})$. Thus, for $T$ sufficiently large,
$$
1+\frac{T+D(\vu_T,f)}{r^{(\vu_T)}(f)}\leq e^\eta \frac{T^{1+\alpha}}{r},
$$
  and 
  $$
 \frac{T-D(\vu_T,f)}{R^{(\vu_T)}(f)}-1 \geq e^{-\eta}\frac{T^{1+\alpha}}{R}.
$$
 Now apply inequality (\ref{eq: minoration2}) 
to the function $f$ and the vector $\vu_T$. We obtain
 $$
\sum_{\gamma \in \Gamma_T} f\left(\frac{\gamma \vu}{T^\alpha}\right)
 \geq 2\int_{-e^{-\eta}T^{(1+\alpha)}/R}^{e^{-\eta}T^{(1+\alpha)}/R} \bar{f}(h^s g^{-2\alpha\log T}\Gamma\Psi(\vu))ds.
 $$
When $\alpha=0$, $g^{-2\alpha\log T}\Gamma\Psi(\vu)$ is constant. 
Thus, using  Theorem \ref{magic-equivalent}, when $\alpha=0$, we obtain 
$$
\liminf_{T\rightarrow +\infty} \frac{I(0,f,T,\vu)}{\tau(g^{\log T - \log R- \eta}\Gamma\Psi(\vu))}
 \geq \frac{2e^{-\delta \eta} R^{-\delta}}{m^{ps}(T^1S)}\int_{T^1 S} \bar{f}dm
 \geq \frac{2e^{-\delta\eta} R^{-\delta} }{m^{ps}(T^1S)}\int_{\R^2} fd\bar{\mu},
$$
 and since for all $\vv \in \supp(f)$, $\vv\star \vu\geq r=v^{(\vu)}(f)R$, 
inequality (\ref{eq: minoration3}) follows. The upper bound 
 (\ref{eq: majoration3}) is similar.

Otherwise (when $\alpha\neq 0$), we need a uniform equidistribution property.
When $\Gamma$ is convex-cocompact,  Theorem \ref{magic-equivalent} gives 
a uniform convergence of 
$$
\frac{1}{e^{-\delta\eta}T^{(1+\alpha)\delta}/R^\delta }\frac{1}{\tau(g^{(1+\alpha)\log T-\log R-\eta}w)}
\int_{-e^{-\eta}T^{(1+\alpha)}/R}^{e^{-\eta} T^{(1+\alpha)}/R} \bar{f}(h^s g^{-2\alpha\log T} w)\,ds
$$
towards $\int_{T^1M}f\,dm/m^{ps}(T^1M)$, uniformly in $w\in \Omega$. 
However, we can not apply it directly, since there is no reason that $g^{-2\alpha\log T}\Gamma\Psi(\vu)$ belongs to $\Omega$. 
Choose $s_0\in \R$ such that $\Gamma\Psi(\vu)=h^{s_0}u_0$, with $u_0\in \Omega$. 
Now, $g^{-2\alpha\log T }u_0\in \Omega$, so that the above convergence holds when replacing $w$ by $g^{-2\alpha\log T}u_0$.  

Observe that $h^s g^{-2\alpha\log T} h^{s_0}u_0=h^{s+s_0T^{2\alpha}} g^{-2\alpha\log T}u_0$, so that 
\begin{eqnarray*}
\int_{-e^{-\eta}T^{(1+\alpha)}/R}^{ e^{-\eta}T^{(1+\alpha)}/R}\bar{f}(h^s\Gamma\Psi(\vu)&=& 
\int_{-e^{-\eta}T^{(1+\alpha)}/R+ s_0T^{2\alpha}}^{ e^{-\eta}T^{(1+\alpha)}/R+ s_0T^{2\alpha}}\bar{f}(h^s g^{-2\alpha\log T}u_0)\,ds\\
&\ge& \int_{-e^{-\eta}T^{(1+\alpha)}/R+ s_0T^{2\alpha}}^{ e^{-\eta}T^{(1+\alpha)}/R- s_0T^{2\alpha}}\bar{f}(h^s g^{-2\alpha\log T}u_0)\,ds -2s_0 T^{2\alpha}\|f\|_\infty
\end{eqnarray*}
As $\alpha\in (-1,1)$, we have 
$$
\lim_{T\to +\infty} \frac{s_0 T^{2\alpha}\|f\|_\infty}{T^{(1+\alpha)\delta}\tau(g^{(1+\alpha)\log T - \log R- \eta} h^{s_0} u_0)} = 0 
$$
Remark also that $\tau$ is continuous, and therefore uniformly continuous on a compact neighbourhood $V(\Omega)$ of $\Omega$. 
As $(1-\alpha)\log T-\log R-\eta\to +\infty$ when $T\to +\infty$, the distance from $g^{(1-\alpha)\log T-\log R-\eta}\Gamma\Psi(\vu)$ to 
$g^{(1-\alpha)\log T-\log R-\eta}u_0\in \Omega$ goes to $0$. Thus, when $T$ goes to $+\infty$, the ratio 
$$
\frac{\tau(g^{(1-\alpha)\log T - \log R- \eta}\Gamma\Psi(\vu))}{\tau(g^{(1-\alpha)\log T-\log R-\eta}u_0)}$$
converges to $1$. 

The uniform convergence property of theorem \ref{magic-equivalent} on $\Omega$ gives, for 
$T$ large enough, and all $u_0\in \Omega$,  
$$ 
\frac{\int_{-e^{-\eta}T^{(1+\alpha)}/R+ s_0T^{2\alpha}}^{ e^{-\eta}T^{(1+\alpha)}/R- s_0T^{2\alpha}}\bar{f}(h^s g^{-2\alpha\log T}u_0)\,ds}
{e^{-\eta}T^{(1+\alpha)}/R\tau(g^{(1-\alpha)\log T-\log R-\eta}u_0)}\ge 
e^{-\eta} \frac{\int_{T^1M}\bar{f}\,dm}{m^{ps}(T^1M)}=e^{-\eta} \frac{\int_{\R^2} f\,d\bar{\mu}}{m^{ps}(T^1M)}
$$

Now, putting all these estimates together, we obtain, for $T$ large enough, 
$$
 \liminf_{T\to +\infty}\frac{I(\alpha,f,T,\vu)}{\tau(g^{(1-\alpha)\log T - \log R- \eta}\Gamma\Psi(\vu))} 
 \geq \frac{2e^{-\delta\eta-3\eta} R^{-\delta} }{m^{ps}(T^1S)}\int_{\R^2} fd\bar{\mu},
$$
 and since for all $\vv \in \supp(f)$, $\vv\star \vu\geq r=v^{(\vu)}(f)R$, 
inequality (\ref{eq: minoration3}) follows. The upper bound 
 (\ref{eq: majoration3}) is similar.
 \end{proof}

 
\subsection{Proof of Theorem \ref{th:convexcocompact} }

 It is sufficient to consider nonnegative $f$.
 As $\Gamma_T=-\Gamma_T$, we have 
 $\sum_{\gamma \in \Gamma_T} f(\gamma \vu/T^\alpha) = 
\sum_{\gamma \in \Gamma_T} \frac12(f(\gamma \vu/T^\alpha)+f(-\gamma \vu/T^\alpha))$, so that
 we can also restrict the study to symmetric $f$.
 
Let $f,\vu$ be as in the assumptions of Theorem \ref{th:convexcocompact}, 
with $f$ nonnegative and symmetric. 
Let $u\in T^1S$ be the projection of $\Psi(\vu)$, and $\alpha \in (-1,1)$. 
Fix  $\epsilon=\eta=1/2$, and decompose $f$ as
 $$
f=\sum_i f_i,
$$
 where $v^{(\vu)}(f_i)<e^\varepsilon$. 

Since $\Gamma$ is convex-cocompact, the nonwandering set $\Omega$ is compact, 
and the continuous map $\tau$, which is positive on $\Omega$,  is bounded from below and above by positive constants $c_1,c_2$ on the set 
$\Omega_{1/2}$ of vectors $v\in \mathcal{E}$, such that $h^s v\in\Omega$ for some $s$ with $|s|\le \frac{1}{2}$. 
As $\vu\in \mathcal{C}(\Gamma_0)$, the horocycle $\Gamma \Phi(\vu)$ is included in  $\mathcal{E}$,
 so that for all $t>0$ large enough, $g^t\Gamma\Psi(\vu)$ belongs to $\Omega_{1/2}$, and  
 $$
c_1 \leq \tau(g^t u) \leq c_2.
$$
 Inequalities (\ref{eq: minoration3}) and (\ref{eq: majoration3}) for the map $f_i$ imply that
 $$\frac{\sum_{\gamma \in \Gamma_T} f_i\left(\frac{\gamma \vu}{T^\alpha}\right)}{T^{(1+\alpha)\delta}}\asymp \int_{\R^2} \frac{f_i(\vv)}{(\vv\star \vu)^\delta}d\bar{\mu}(\vv),$$
 where the implied constants do not depend on $f_i$ nor $\vu$. 
Summing over $i$ gives the required estimate.

\begin{rem}\rm Observe now, to prove the assertion of remark \ref{boundedorbits},
 that if $\Gamma$ is geometrically finite, but the geodesic orbit $(g^t\Gamma\Psi(\vu))_{t\ge 0}$ 
is bounded when $t\to +\infty$, there exist two constants $0<c_1(\vu)\le c_2(\vu)<\infty$, such that 
$c_1(\vu) \leq \tau(g^t u) \leq c_2(\vu)$. 
Thus, the conclusion of theorem \ref{th:convexcocompact} remains valid for vectors $\vu\in\R^2\setminus\{0\}$ such that
the positive geodesic orbit of $\Gamma\Psi(\vu))$ is bounded. 
\end{rem}


\subsection{Proof of Proposition \ref{theresnolimit}}

 We begin by proving two Lemmas.  

\begin{lem} Assume that $\Gamma$ is a nonelementary geometrically finite group of infinite covolume. 
The map $\tau$ restricted to $\Omega$ is non-constant on any orbit of the geodesic flow.
\end{lem}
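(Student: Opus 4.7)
The plan is to argue by contradiction. Suppose there exists $u\in\Omega$ with $\tau(g^t u)=c$ for every $t\in\R$. First I would verify that $c>0$: since $u\in\Omega$ we have $u^-\in\Lambda$, so the image of $B_{H^-}(u,1)$ under the homeomorphism $v\mapsto v^-$ is an open neighborhood of $u^-$ in $\partial\H$, which has positive $\nu_o$-mass because $\Lambda=\supp(\nu_o)$; by the formula $d\mu_{H^-}(v)=e^{\delta\beta_{v^-}(o,\pi(v))}d\nu_o(v^-)$ this forces $\tau(u)>0$. Next, using the scaling identity recalled at the start of Section \ref{3}, constancy of $\tau$ along the orbit translates into the rigid equality
\[
\mu_{H^-}(B_{H^-}(u,R))\;=\;R^\delta\,\tau(g^{\log R}u)\;=\;c\,R^\delta\qquad\text{for every }R>0.
\]

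In the horocyclic-flow parametrization $B_{H^-}(u,R)=\{h^s u:|s|\le R\}$, I would read this as saying that the symmetric cumulative distribution function $F(R):=\mu_{H^-}([-R,R])$ of the pushforward of $\mu_{H^-}$ to $\R$ via $s\mapsto h^s u$ is exactly $cR^\delta$. Hence $F$ is $C^1$ on $(0,\infty)$ with strictly positive derivative $c\delta R^{\delta-1}$; equivalently, the pushed-forward measure has a nontrivial absolutely continuous component with respect to Lebesgue on $\R$.

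To close the argument I would derive a contradiction with singularity of $\mu_{H^-}$. The infinite-covolume hypothesis forces $\delta<1$ (in the nonelementary geometrically finite setting, $\delta=1$ occurs only in finite volume), so $\Lambda$ has Hausdorff dimension $\delta<1$ and thus vanishing Lebesgue measure on $\partial\H$. Since $\mu_{H^-}$ is supported on $\{v\in H^-(u):v^-\in\Lambda\}$, which pulls back to a Lebesgue-null subset of $\R$, the measure $\mu_{H^-}$ is singular with respect to Lebesgue on the horocycle. By Lebesgue's differentiation theorem the distribution function of a singular measure has derivative zero almost everywhere, contradicting $F'(R)=c\delta R^{\delta-1}>0$. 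The only delicate point in this plan is the singularity claim (together with the invocation of the dichotomy $\delta<1$ $\Leftrightarrow$ infinite volume); once that is in hand the conclusion is a direct regularity comparison.
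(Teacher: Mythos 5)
Your proof is correct, but it follows a genuinely different route from the paper's. You turn constancy of $\tau$ along the orbit into the exact power law $\mu_{H^-}(B_{H^-}(u,R))=cR^\delta$ for all $R>0$ (with $c=\tau(u)>0$ since $u^-\in\Lambda=\supp\nu_o$), and then contradict it by a Lebesgue-density argument: the pushforward of $\mu_{H^-}$ to the parameter line is carried by $\{s:(h^su)^-\in\Lambda\}$, which is null because $\dim_H\Lambda=\delta<1$ in the infinite-covolume geometrically finite case, so its symmetric distribution function must have vanishing derivative a.e., incompatible with $F'(R)=c\delta R^{\delta-1}>0$. This works, but it imports nontrivial external facts: the rigidity statement that $\delta=1$ (equivalently $\Lambda=\partial\H$) forces finite covolume for geometrically finite Fuchsian groups, and $\dim_H\Lambda=\delta$ (Beardon, Patterson, Sullivan) --- or at least the theorem that the limit set of a finitely generated group of the second kind is Lebesgue-null. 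The paper's proof is lighter: it only uses that the ordinary set $\partial\H\setminus\Lambda$ is open and dense (immediate from the existence of a funnel), so that $\mu_{H^-}((h^su)_{|s|\le e^t})$ is locally constant in $t$ on an open dense set of times, while the prefactor $e^{-\delta t}$ is strictly decreasing; hence $\tau(g^tu)$ is locally strictly decreasing there, which gives non-constancy (and in fact a bit more) without any measure-zero input on $\Lambda$. One small presentational point: your phrase that positivity of $F'$ is ``equivalent'' to having a nontrivial absolutely continuous component is loose --- the clean implication is the one you actually use, namely that a singular measure has a.e.\ vanishing derivative of its distribution function; and you rightly check $c>0$, a point the paper leaves implicit.
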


\begin{proof} Since $\Gamma$ is geometrically finite of infinite volume, 
the set of ordinary points $\partial \H \setminus \Lambda$ is an open dense set.
 Let $u \in T^1S$ , $v \in T^1\H$ a lift. By definition
\begin{align*}
\tau(g^t u) & =\mu_{H^-}((h^sg^tu)_{|s|\leq 1})=e^{-\delta t}\mu_{H^-}((h^s u)_{|s|\leq e^t}\\
& = e^{-\delta t} \int_{-e^t}^{e^t} e^{\delta \beta_{(h_s v)^-}(o,\pi(h_s v))}d\nu_o((h_s v)^-)
\end{align*}
 The set of $s\in \R$ such that $(h_sv)^-$ is the ordinary set 
$\partial \H \setminus \Lambda$ is an open dense subset. 
So the set of $t\geq 0$ such that both $(h_{e^t}v)^-$ and $(h_{-e^t}v)^-$ 
are both in the ordinary set is also an open dense subset of $\R^+$.
 In particular, the above integral is locally constant on an
 open dense set of parameters $t\in\R$; 
since the factor $e^{-\delta t}$ is strictly decreasing, this proves the claim.
\end{proof}

\begin{lem} \label{variation} Assume that $\Gamma$ is a nonelementary geometrically finite 
 group of infinite covolume. 
For all $t_0>0$, there exists $\epsilon>0$ such that for $\hat{\mu}$-almost every horocycle of $\mathcal{H}$
and every $u$ in such an horocycle,
$$
\limsup_{t\rightarrow +\infty} \frac{\tau(g^{t+t_0}u)}{\tau(g^{t}u)}>e^\epsilon,
$$
and
$$
\liminf_{t\rightarrow +\infty} \frac{\tau(g^{t+t_0}u)}{\tau(g^{t}u)}<e^{-\epsilon}.
$$
Moreover,  $\epsilon$ can be choosen uniformly in $t_0$ varying in compact subsets of $\R^+_*$.
\end{lem}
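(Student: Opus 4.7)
The plan is to study the continuous cocycle $\phi_{t_0}(v):=\log\tau(g^{t_0}v)-\log\tau(v)$ on the support $\Omega$ of $m^{ps}$, and to combine three ingredients: the $g^t$-invariance of $m^{ps}$ (which forces $\int\phi_{t_0}\,dm^{ps}=0$), the non-constancy of $\tau$ along every geodesic orbit established in the previous lemma, and the ergodicity of the geodesic flow. First I would observe that if $\log\tau\in L^1(m^{ps})$, then $\int\log\tau(g^{t_0}v)\,dm^{ps}(v)=\int\log\tau(v)\,dm^{ps}(v)$ immediately yields $\int\phi_{t_0}\,dm^{ps}=0$; this is automatic in the convex-cocompact case where $\Omega$ is compact and $\tau$ is positive continuous on $\Omega$. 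In the presence of cusps I would argue via Poincar\'e recurrence for the finite invariant measure $m^{ps}$: for $m^{ps}$-a.e.\ $v$ the sequence $(\tau(g^{nt_0}v))$ returns infinitely often to any prescribed compact level, so the telescoping Birkhoff sum $\sum_{k<n}\phi_{t_0}(g^{kt_0}v)=\log\tau(g^{nt_0}v)-\log\tau(v)$ is $o(n)$ almost surely, which together with a truncation of $\log\tau$ still yields $\int\phi_{t_0}\,dm^{ps}=0$.

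Next I will exhibit a vector $v_0\in\Omega$ with $\phi_{t_0}(v_0)$ strictly negative. Since $\Gamma$ has infinite covolume, the ordinary set $\partial\H\setminus\Lambda$ is a nonempty open subset of $\partial\H$. Using that $s\mapsto (h^sv_0)^-$ is a homeomorphism from $\R$ onto $\partial\H\setminus\{v_0^+\}$, I can choose $v_0^\pm\in\Lambda$ and, after rescaling by a suitable power of the geodesic flow, arrange that $(h^sv_0)^-$ lies in the ordinary set for all $s\in(-e^{t_0},-1)\cup(1,e^{t_0})$. For such $v_0\in\Omega$, the Patterson--Sullivan conditional $\mu_{H^-}$ gives no mass to the annulus $B_{H^-}(v_0,e^{t_0})\setminus B_{H^-}(v_0,1)$, hence $\tau(g^{t_0}v_0)=e^{-\delta t_0}\tau(v_0)$ and $\phi_{t_0}(v_0)=-\delta t_0$. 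Combined with $\int\phi_{t_0}\,dm^{ps}=0$ and the continuity of $\phi_{t_0}$, this forces both open sets $U_\pm:=\{\pm\phi_{t_0}>\varepsilon\}$ to have positive $m^{ps}$-measure for some $\varepsilon>0$.

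The conclusion now comes from ergodicity. For $m^{ps}$-a.e.\ $u\in\Omega$, the forward orbit $(g^tu)_{t\ge 0}$ enters $U_+$ and $U_-$ along unbounded sequences of times, giving $\limsup_t\phi_{t_0}(g^tu)>\varepsilon$ and $\liminf_t\phi_{t_0}(g^tu)<-\varepsilon$. Disintegrating $dm^{ps}=d\mu_{H^-}\,d\hat{\mu}$ translates ``$m^{ps}$-a.e.\ $u$'' into ``$\hat{\mu}$-a.e.\ horocycle $H$ and $\mu_{H^-}$-a.e.\ $u\in H$''. To upgrade from $\mu_{H^-}$-a.e.\ $u$ to \emph{every} $u$ on a generic horocycle, I will use the asymptoticity $g^th^{s_0}u=h^{s_0 e^{-t}}g^tu$ together with uniform continuity of $\tau$ on a compact neighbourhood of the return-set: both geodesic orbits become arbitrarily close as $t\to+\infty$, so $\phi_{t_0}(g^tu)-\phi_{t_0}(g^tu')\to 0$ for $u,u'$ on the same strong stable horocycle, and the limsup/liminf depend only on the horocycle.

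Finally, the uniformity of $\varepsilon$ as $t_0$ ranges over a compact subset of $\R_+^*$ follows from joint continuity of $(t_0,v)\mapsto\phi_{t_0}(v)$ and the fact that the explicit witness $\phi_{t_0}(v_0)=-\delta t_0$ depends continuously on $t_0$; a standard compactness argument then produces a single $\varepsilon$ working uniformly. The main obstacle I expect is the rigorous proof of $\int\phi_{t_0}\,dm^{ps}=0$ in the cusp case, since $\log\tau$ can behave like $(1-\delta)\,d(\cdot,K)$ and need not lie in $L^1(m^{ps})$; the Poincar\'e-recurrence/truncation argument sketched above should nonetheless deliver the needed cancellation, and everything else reduces to continuity plus ergodicity.
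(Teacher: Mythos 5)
Your overall architecture (zero average of $\phi_{t_0}=\log\tau\circ g^{t_0}-\log\tau$ against $m^{ps}$, ergodicity to make a.e.\ orbit visit the sets $U_\pm=\{\pm\phi_{t_0}>\varepsilon\}$, then transfer along strong stable horocycles) is parallel in spirit to the paper's, but the step on which everything hinges --- producing, for \emph{every} $t_0>0$, a point of $\Omega$ where $\phi_{t_0}$ is genuinely nonzero --- is where the argument breaks down. Your witness $v_0$ requires the whole parameter annulus $\{1<|s|<e^{t_0}\}$ on the horocycle of $v_0$ to be mapped into the ordinary set, i.e.\ a two-sided gap of modulus roughly $t_0$ around $v_0^-\in\Lambda$ avoiding $\Lambda$. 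This is impossible once $t_0$ is large: limit sets of nonelementary groups are uniformly perfect, and inside the paper one sees the obstruction directly from Proposition \ref{mesure-des-boules-horospheriques}. In the convex-cocompact case $\tau$ is pinched between two positive constants on $\Omega$, so $\phi_{t_0}\ge -2\log C$ uniformly, contradicting $\phi_{t_0}(v_0)=-\delta t_0$ as soon as $\delta t_0>2\log C$; with cusps the same proposition gives $\tau(g^{t_0}u)/\tau(u)\ge C^{-2}e^{-(1-\delta)t_0}$, again incompatible with $e^{-\delta t_0}$ for large $t_0$ since $\delta>1/2$ there. Nor is there an easy reduction of large $t_0$ to small $t_0$ inside your scheme: the limsup statement for $t_0$ does not follow from the one for $t_0/n$, because the witnessing times need not coincide. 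Exhibiting points where $\tau\circ g^{t_0}/\tau$ is $>1$ and $<1$ is precisely the content the paper has to work for: in the convex-cocompact case it assumes $\sup_\Omega \tau\circ g^{t_0}/\tau\le 1$, applies Birkhoff for the time-$t_0$ map (ergodic by mixing), continuity and full support of $m^{ps}$ to force $\tau$ constant on $\Omega$, contradicting the preceding lemma; in the cusped case it uses $\tau\asymp e^{(1-\delta)d(\cdot,K_0)}$ to produce a witness for all large $t_0$ (a deep cusp excursion) and a telescoping product to handle small $t_0$.

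The remaining ingredients of your plan are essentially sound, with two remarks. The identity $\int\phi_{t_0}\,dm^{ps}=0$ is legitimate and needs no truncation: by Proposition \ref{mesure-des-boules-horospheriques} and $|d(\pi(g^{t_0}u),K_0)-d(\pi(u),K_0)|\le t_0$, the function $\phi_{t_0}$ is bounded on $\Omega$, so invariance of $m^{ps}$ (or your recurrence argument) applies directly. Your passage from $m^{ps}$-a.e.\ $u$ to every $u$ on a $\hat{\mu}$-generic horocycle does work provided you take $U_\pm$ relatively compact, since then $g^{t_i}h^{s_0}u=h^{s_0e^{-t_i}}g^{t_i}u$ eventually lies in a fixed compact neighbourhood where $\tau$ is uniformly continuous and bounded below; this matches the paper's convex-cocompact argument, whereas with cusps the paper instead invokes Lemma \ref{indephoro}. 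But as written the proof is incomplete: the annulus construction must be replaced by an argument, such as the paper's, showing that $\tau\circ g^{t_0}/\tau$ takes values on both sides of $1$ on $\Omega$ for every $t_0>0$, with the strictness quantified uniformly for $t_0$ in compact subsets of $\R_+^*$.
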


\begin{proof} Define 
$\displaystyle 
c^+=\sup_{u\in \Omega} \frac{\tau(g^{t_0}u)}{\tau(u)}$  and 
$\displaystyle c^-=\inf_{u\in \Omega} \frac{\tau(g^{t_0}u)}{\tau(u)}.$
We will first prove that $c^-<1<c^+$, and then deduce the lemma. 

Assume that $c^+\leq 1$, and $\Gamma$ is convex-cocompact. 
Then for all $u \in \Omega$, $\tau(g^{t_0}u)\leq \tau(u)$,
 so the Birkhoff averages
 satisfy $\frac1n \sum_{k=0}^{n-1} \tau(g^{kt_0}u)\leq \tau(u)$. 
Since the geodesic flow is mixing with respect to $m^{ps}$, 
 $g^{t_0}$ is ergodic with respect to $m^{ps}$. 
As $\tau$ is continuous on the compact $\Omega$,
 it is integrable w.r.t. $m^{ps}$. 
The Birkhoff Theorem implies that for $m^{ps}$-a.e. $u\in\Omega$, 
$\frac{1}{m^{ps}(\Omega)}\int_\Omega \tau dm^{ps}\leq \tau(u)$.
As $\tau$ is continuous,  this is valid for all $u\in \Omega$, 
so that $\tau$ is constant on $\Omega$, which is a contradiction. 
So $c^+>1$, and similarly, $c^-<1$.

Let us deal now with the case where  $\Gamma$ is geometrically finite. 
We use results proved in section \ref{geometrically-finite-groups}.
By proposition \ref{mesure-des-boules-horospheriques}, there exists $T_0>0$, such that for 
all $t_0\ge T_0$, there exists a vector $u$ on  a unbounded geodesic, satisfying 
$\frac{\tau(g^{t_0}u)}{\tau(u)}>1$. Now, if $0<t_0<T_0$, $nt_0\ge T_0$ for some $n\ge 1$, so that 
there exists $v\in T^1M$, with $\tau(g^{nt_0}v)>\tau(v)$. But 
$\displaystyle \frac{\tau(g^{nt_0}v)}{\tau(v)}=\Pi_{k=1}^n\frac{\tau(g^{kt_0}v)}{\tau(g^{(k-1)t_0}v)}$, 
so that we deduce also that $c^+>1$. 

 Choose a small $\eta>0$, and Let $v \in \Omega$ such that $\frac{\tau(g^{t_0}v)}{\tau(v)}\geq c^+-\eta$. 
An ergodicity argument (still valid on geometrically finite surfaces) shows 
that  for $m^{ps}$-almost every $u$ there is a sequence $t_i\rightarrow +\infty$ 
such that $g^{t_i}u\rightarrow v$, so that 
$$
\limsup_{t\rightarrow +\infty} \frac{\tau(g^{t+t_0}u)}{\tau(g^{t}u)}\geq c^+-\eta\,.
$$
Letting $\eta>0$ go to $0$, the previous limsup is in fact $\geq c^+$.

When $\Gamma$ is convex-cocompact, the map $\tau$ is continuous
 with compact support in a bounded neighbourhood of $\Omega$, and therefore 
uniformly continuous. 
It implies that the above equality depends only on the stable horocycle of $u$.
When $\Gamma$ is geometrically finite,  it is also true, thanks to  lemma \ref{indephoro}. 
 Since $\hat{\mu}$ is the transversal measure on $\mathcal{H}$ of $m^{ps}$,
 this equality is true $\hat{\mu}$-almost surely.
The $\liminf$ case is similar. 
The uniformity in $t_0$ on compact sets 
follows from the continuity of $c^+$ and $c^-$ as functions of $t_0$.
\end{proof}

Let us deduce now Proposition \ref{theresnolimit} from this last Lemma.

 Let $\epsilon>0$ be given by Lemma \ref{variation} for all $t_0 \in [\log (5/4),\log 4]$.
 Choose $\vv_0 \in \mathcal{C}=\supp(\bar{\mu})$.\

Let $f$ be a bump function in a small neigbourhood of $\vv_0$. 
Let $g(\vv)=f(2\vv)$, $\eta>0$ and $\vu \in \R^2-\{0\}$.
The choice of $f$ can be done in such a way that for all $\vu$, 
$$
3 \geq \frac{R^{(\vu)}(g)}{r^{(\vu)}(f)} \geq \frac{r^{(\vu)}(g)}{R^{(\vu)}(f)}\geq \frac32,
$$
and $v^{(\vu)}(f)=v^{(\vu)}(g)<e^{\epsilon/4}$. 
 Using (\ref{eq: minoration3}) and (\ref{eq: majoration3}) with $f$ and $g$ respectively 
and $\alpha=0$, we have for $T$ sufficiently large
$$
\frac{\sum_{\gamma \in \Gamma_T} f(\gamma\vu)}{\sum_{\gamma \in \Gamma_T} g(\gamma\vu)}\geq
 \frac{e^{-3\eta}}{v^{(\vu)}(f)v^{(\vu)}(g)}\frac{\tau(g^{\log T-\log R^{(u)}(f)-\eta}u)}
 {\tau(g^{\log T-\log r^{(u)}(g)+\eta}u)}
 \frac{\int \frac{f(\vv)}{(\vv\star \vu)^\delta}d\bar{\mu}(\vv)}{\int \frac{g(\vv)}{(\vv\star \vu)^\delta}d\bar{\mu}(\vv)}.$$
 Similarly,
$$
\frac{\sum_{\gamma \in \Gamma_T} f(\gamma\vu)}{\sum_{\gamma \in \Gamma_T} g(\gamma\vu)}\leq 
e^{3\eta}v^{(\vu)}(f)v^{(\vu)}(g) 
\frac{\tau(g^{\log T-\log r^{(u)}(f)+\eta}u)}{\tau(g^{\log T-\log R^{(u)}(g)-\eta}u)} 
\frac{\int \frac{f(\vv)}{(\vv\star \vu)^\delta}d\bar{\mu}(\vv)}{\int \frac{g(\vv)}{(\vv\star \vu)^\delta}d\bar{\mu}(\vv)}.
$$
Define $t_1=-\log R^{(u)}(f)+\log r^{(u)}(g)-2\eta$ and $t_2=-\log r^{(u)}(f)+\log R^{(u)}(g)+2\eta$.
 For small enough $\eta$, $t_1$ and $t_2$ are both in the compact subset $[\log (5/4),\log 4]$. 
Assume also that $e^\eta<e^{\epsilon/12}$. 
Lemma \ref{variation} applied to $t_1$ gives
 $$
\limsup_{T\rightarrow +\infty} \frac{\sum_{\gamma \in \Gamma_T} f(\gamma\vu)}{\sum_{\gamma \in \Gamma_T} g(\gamma\vu)} \geq
 e^{-3\epsilon/4}e^{\epsilon}
 \frac{\int \frac{f(\vv)}{(\vv\star \vu)^\delta}d\bar{\mu}(\vv)}{\int \frac{g(\vv)}{(\vv\star \vu)^\delta}d\bar{\mu}(\vv)}> 
 \frac{\int \frac{f(\vv)}{(\vv\star \vu)^\delta}d\bar{\mu}(\vv)}{\int \frac{g(\vv)}{(\vv\star \vu)^\delta}d\bar{\mu}
(\vv)},$$
 and similarly for the $\liminf$,
$$
\liminf_{T\rightarrow +\infty} \frac{\sum_{\gamma \in \Gamma_T} f(\gamma\vu)}{\sum_{\gamma \in \Gamma_T} g(\gamma\vu)} 
< \frac{\int \frac{f(\vv)}{(\vv\star \vu)^\delta}d\bar{\mu}(\vv)}
{\int \frac{g(\vv)}{(\vv\star \vu)^\delta}d\bar{\mu}(\vv)}.
$$
This concludes the proof of Proposition \ref{theresnolimit}.


\section{Geometrically finite groups}\label{geometrically-finite-groups}

In this section, $S$ is a nonelementary geometrically finite surface with cusps. 
It can be written as the union of a compact set, a finite union of cusps, 
and a finite union of funnels. From a dynamical point of view, 
the nonwandering set $\Omega$  of 
the geodesic flow does not see the funnels, and is therefore
 the union of a compact set $K_0$, and finitely many cusps $C_1,...,C_k$. 

If $u\in \Omega$, then $u^+$ is either radial or parabolic. 
If $u^+$ is radial, we  say that $u$   is radial; 
it means that $g^tu$ returns infinitely often in a compact set (which depends on $u^+$)
when $t\to +\infty$. 
On a geometrically finite surface, enlarging $K_0$ a little,
 we may assume that every radial vector returns to $K_0$ infinitely often.


\subsection{The Shadow Lemma}

All results and methods of this paragraph use directly those of \cite{Scha1}. 
The reader should note that in this article,
 the author considered the strong unstable horocyclic 
flow, whereas we use here the strong stable horocyclic flow. 
In particular, the results might at first glance look different. 

The notation $a=C^{\pm 1} b$ means that $ C^{-1}b\le a\le C b$. 
In this paragraph, we prove the following Proposition. 

\begin{prop}\label{mesure-des-boules-horospheriques} 
Let $S$ be a hyperbolic nonelementary  geometrically finite surface,
 $\Omega\subset T^1S$ the nonwandering set of the geodesic flow, 
and $K_0\subset \Omega$ the compact part of $\Omega$. 

Then there exists a constant $C\ge 1$, 
such that for $u\in \Omega$, we have
$$
\tau(u)=C^{\pm1} e^{(1-\delta)d(\pi(u),K_0)}.
$$
\end{prop}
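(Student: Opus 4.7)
The argument will split the nonwandering set as $\Omega=K_0\cup C_1\cup\cdots\cup C_k$, with $K_0$ the compact part and $C_i$ the cuspidal neighborhoods. On $K_0$, Lemma~\ref{propertiesoftau} gives that $\tau$ is continuous and strictly positive on the relevant vectors, hence by compactness pinched between two positive constants; since $d(\pi(u),K_0)=0$ for $u\in K_0$, the estimate is immediate. The substantive work is to handle $u\in\Omega$ with $\pi(u)$ deep in a cusp.

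\noindent\textbf{Reduction via scaling.} Fix a cusp $C$ with parabolic fixed point $p$, and place $p=\infty$ in the upper half-plane with generator $z\mapsto z+1$, so that $C$ lifts to $\{\operatorname{Im} z\ge y_0\}$. Let $u\in\Omega$ with $\pi(u)$ at depth $T=d(\pi(u),K_0)\gg 1$ in $C$, and set $w=g^{-T}u$, which sits at depth $\approx 0$, i.e.\ near $\partial C$. The identity $\tau(g^T w)=e^{-\delta T}\mu_{H^-}(B_{H^-}(w,e^T))$ reduces the proposition to the estimate
\begin{equation*}
\mu_{H^-}\bigl(B_{H^-}(w,R)\bigr)\;\asymp\;R\qquad (R=e^T\to\infty),
\end{equation*}
where on the left $\mu_{H^-}$ is computed along the injective lift of the horocyclic arc to $T^1\H$, i.e.\ as the parametrized integral $\int_{|s|\le R}d\mu_{H^-}(h^s\tilde w)$.

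\noindent\textbf{Main computation.} In the model case $\tilde w^+=\infty$, the horocycle through $\tilde w$ is the horizontal line at height $y_0$, with $h^s$ acting by $(x_w+y_0 s,y_0)$, so $(h^s\tilde w)^-=x_w+y_0 s$. A direct Busemann computation gives $\beta_x(o,x+iy_0)=\log\bigl((1+x^2)/y_0\bigr)$, hence
\begin{equation*}
\mu_{H^-}\bigl(B_{H^-}(w,R)\bigr)\;=\;y_0^{-\delta}\int_{|x-x_w|\le y_0 R}(1+x^2)^\delta\,d\nu_o(x).
\end{equation*}
Since nonelementary geometric finiteness forces $\delta>1/2$, the Stratmann--Velani global shadow lemma at the rank-one cusp $\infty$ yields $\nu_o(\{|x|\ge M\})\asymp M^{1-2\delta}$ for $M\ge 1$. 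Decomposing dyadically, on each annulus $|x|\in[2^k,2^{k+1}]$ the integrand is $\asymp 2^{2\delta k}$ and the $\nu_o$-mass is $\asymp 2^{k(1-2\delta)}$, so each block contributes $\asymp 2^k$; summing for $k$ up to $\log_2(y_0 R)$ produces a total $\asymp y_0 R$. Therefore $\mu_{H^-}(B_{H^-}(w,R))\asymp R$ and $\tau(u)\asymp e^{(1-\delta)T}$.

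\noindent\textbf{The main obstacle.} The real difficulty is the general case $\tilde w^+\ne p$, where the strong stable horocycle through $\tilde w$ is a Euclidean circle tangent to $\partial\H$ at $\tilde w^+$ rather than a horizontal line; the PS density along it is then governed by the behavior of $\nu_o$ near both $\tilde w^+$ and every $\Gamma$-translate of $p$ that the horocyclic arc visits. I would handle this either (a) by applying the isometry sending $\tilde w^+\mapsto\infty$ and using the cocycle rule $d\nu_{g\cdot o}(\xi)=e^{-\delta\beta_\xi(o,g\cdot o)}d\nu_o(\xi)$ to transport the cusp shadow estimate to the new coordinate, or (b) by decomposing the long horocyclic arc in $T^1\H$ into its pieces lying in the various $\Gamma$-translates of the standard horoball around $p$ and applying the Stratmann--Velani estimate at each translated parabolic point. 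The technical point---that the implied constants remain uniform as $\tilde w^+$ varies over $\Lambda$ and across cusps---is precisely what is established by the methods of \cite{Scha1}, which the author explicitly invokes at the start of the section.
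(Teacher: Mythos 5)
Your model computation is essentially correct, but it only treats the special case where $\tilde w^+$ is the parabolic fixed point of the cusp (equivalently $u^+=p$), i.e.\ the measure along the closed horocycle; for a general $u\in\Omega$ deep in a cusp, $u^+$ is typically a radial limit point, and that case — which is the whole content of Proposition \ref{mesure-des-boules-horospheriques} — is only sketched. Your appeal to ``the methods of \cite{Scha1}'' for the uniformity in $\tilde w^+$ defers exactly the step that constitutes the proof: what is needed is the shadow lemma of \cite{Scha1} (Theorem \ref{shadow-lemma}), valid with a uniform constant for all $\xi\in\Lambda$, combined with the comparison of small horocyclic balls with shadows under the projection $v\mapsto v^-$ (Lemma \ref{projection}). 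The paper's argument is precisely: reduce to radial $u$ (continuity of $\tau$ and density of radial vectors), flow forward to the first time $t$ with $g^tu\in K_0$, write $\tau(u)=e^{\delta t}\mu_{H^-}((h^sg^tu)_{|s|\le e^{-t}})$, sandwich this ball between the shadows $V(x,\tilde u^-,t\pm\alpha)$, and read off the factor $e^{(1-\delta)d(\xi_x(t),\Gamma x)}$ from Theorem \ref{shadow-lemma}. Neither of your two proposed routes is carried out, and route (b) (summing Stratmann--Velani contributions over all horoball translates met by the arc) amounts to reproving that shadow lemma along horocycles; so as written your proof covers only a $m^{ps}$-negligible and very special family of vectors.

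There are also concrete defects in the part you do write. The reduction ``set $w=g^{-T}u$, which sits at depth $\approx 0$'' is false for vectors on the outgoing half of a cusp excursion: if the excursion has maximal depth $D$ and $u$ is at depth $T$ on the way out, then $g^{-T}u$ sits at depth $\min\bigl(2(D-T),2T\bigr)$ — at the apex when $T=D/2$ — so the geometric setup of your main computation simply does not apply to those $u$ (the paper avoids this by flowing forward to the first entry into $K_0$, not backward by the depth). Two minor points: the inequality $\delta>1/2$ comes from the presence of a parabolic subgroup (whose critical exponent is $1/2$), not from nonelementary geometric finiteness — convex-cocompact groups can have $\delta$ arbitrarily small, though in your cusp setting the bound does hold; and in the dyadic lower bound you cannot subtract consecutive tails $\nu_o(\{|x|\ge 2^k\})\asymp 2^{k(1-2\delta)}$, since the upper and lower implied constants differ — one must group a bounded number of dyadic scales (or use a single annulus $[N/2^j,N]$ with $j$ fixed large), which repairs the lower bound but, again, only in the model case.
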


\begin{rem}\rm This result, which can be read as: for any $u \in \Omega$,
$$
\mu_{H^-}((h^su)_{|s|\leq r})=C^{\pm 1}r^\delta e^{(1-\delta)d(\pi(g^{\log r}u),K_0)},
$$
 is also true on manifolds of 
higher dimension and variable curvature, under the assumption $(*)$
 of theorem 3.2 of \cite{Scha1}. 
But the statement has to be slightly modified. 
Assume that the surface $S$ has $k$ cusps $C_i$, 
$1\le i\le k$, whose critical exponents are 
denoted by $\delta_i<\delta$. 
Then we have 
$$
\mu_{H^-}((h^su)_{|s|\le r})=
C^{\pm 1}r^{\delta} \, e^{(2\delta_i-\delta)d(\pi(g^{\log r}u),K_0)}\,,
$$ if $g^{\log r}u$ belongs to the cusp $C_i$. 
\end{rem}

 We will deduce  Proposition \ref{mesure-des-boules-horospheriques} 
from the following version of the {\em Shadow Lemma}.
 For $x\in\H$, $\xi\in S^1$, $t\in \R$, denote by $\xi_x(t)$ 
the point at signed distance $t$ of $x$ on the geodesic $(x\xi)$, 
with the orientation such that $\xi_x(t)\rightarrow \xi$ as
 $t\rightarrow +\infty$. Define $V(x,\xi,t)$ 
as the set of points $\eta\in \partial \H$ whose projection on the geodesic 
line $(x\xi)$ is in fact on the geodesic ray $[\xi_x(t)\xi)$.

\begin{thm}[\cite{Scha1},Th 3.2]\label{shadow-lemma}
 Let $S$ be a nonelementary geometrically finite hyperbolic surface.
Fix a point $x \in \H$. 
 There exists a constant $A>0$ such that 
for all $\xi\in \Lambda$ and $t\ge 0$, we have
$$
\nu_x(V(x,\xi,t))= A^{\pm 1} e^{-\delta t+(1-\delta)d(\xi_x(t),\Gamma.x)}\,. 
$$
Moreover, the constant $A$ can be chosen in a $\Gamma$-invariant way, and 
uniformly in $x$ varying in a compact set. 
\end{thm}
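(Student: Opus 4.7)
The plan is to adapt the classical Patterson--Sullivan--Stratmann--Velani strategy: first shift the base point of the Patterson density from $x$ to $y=\xi_x(t)$ via conformality, then estimate the shifted measure through a Poincar\'e-series count of $\Gamma$-orbit points near $y$, with the cusp contribution providing the correction factor $e^{(1-\delta)d}$.

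Step one. The conformal density identity $d\nu_x/d\nu_y(\eta)=e^{-\delta\beta_\eta(x,y)}$ extends from orbit points to arbitrary $y\in\H$. Setting $y=\xi_x(t)$, each $\eta\in V(x,\xi,t)$ has its geodesic $[x\eta)$ passing within a bounded distance of $y$ by definition of $V$, hence $\beta_\eta(x,y)=t+O(1)$ uniformly. This yields
$$\nu_x(V(x,\xi,t))=e^{-\delta t\pm O(1)}\,\nu_y(V(x,\xi,t)),$$
reducing the whole problem to estimating $\nu_y(V(x,\xi,t))$.

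Step two. I would then split according to the position of $y$ in the convex core. If $y$ lies in the thick part $K_0$, i.e.\ $d:=d(y,\Gamma.x)=O(1)$, pick $\gamma_0\in\Gamma$ with $\gamma_0 x$ closest to $y$; using conformality once more and the $\Gamma$-equivariance $\gamma_{0*}\nu_x=\nu_{\gamma_0 x}$, the quantity $\nu_y(V(x,\xi,t))$ reduces to the $\nu_x$-measure of $\gamma_0^{-1}V(x,\xi,t)$, namely the shadow of a bounded-size object seen from a bounded-distance point, which is $\asymp 1$ by standard positivity of $\nu_x$ on the radial limit set; this recovers Sullivan's estimate $\nu_x(V)\asymp e^{-\delta t}$, consistent with $e^{-\delta t+(1-\delta)\cdot O(1)}$. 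If instead $y$ lies at depth $d\gg 1$ inside a cusp with parabolic fixed point $p$ and rank-one stabilizer $\Gamma_p=\langle T\rangle$, the orbit points close to $y$ are exactly parabolic translates $T^n\gamma_0 x$ for finitely many cosets $\gamma_0\Gamma_p$. Expanding $\nu_y(V(x,\xi,t))$ as a sum over these translates and using $d(x,T^n x)=2\log|n|+O(1)$ together with the horoball geometry of the cusp, one obtains a truncated parabolic Poincar\'e series whose total mass is $\asymp e^{(1-\delta)d}$, matching the Stratmann--Velani global measure formula in dimension two. Combined with step one this gives the claimed estimate; uniformity in $x$ over compact sets and $\Gamma$-invariance of $A$ follow from continuity of the Busemann cocycle and the $\Gamma$-equivariance of the conformal density.

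The main obstacle is the cusp analysis. One must identify precisely which cosets $\gamma_0\Gamma_p$ contribute, verify that non-parabolic contributions are absorbable in the multiplicative constant $A$, and match the Poincar\'e-series truncation with the geometric depth $d(y,\Gamma.x)$. Particular care is needed when $\xi$ is itself parabolic or when $\xi_x(t)$ straddles the boundary between thick and cusp regions, so that $A^{\pm1}$ remains uniform in $\xi\in\Lambda$. The factor $(1-\delta)$ is specific to rank-one cusps in dimension two; in higher dimension or parabolic rank it becomes $(k-\delta)$ with $k$ the parabolic rank, reflecting the growth rate of the corresponding parabolic Poincar\'e series.
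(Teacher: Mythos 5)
This theorem is not proved in the paper at all: it is imported verbatim from \cite{Scha1}, Th.~3.2, and the proof there (in the Stratmann--Velani tradition) follows exactly the strategy you outline, namely base-point shift by conformality plus Sullivan's shadow lemma in the thick part plus a parabolic counting argument in the cusps. Your Step one is correct ($\beta_\eta(x,\xi_x(t))=t+O(1)$ on $V(x,\xi,t)$ by thin triangles), and the thick-part case is fine: the uniform lower bound $\nu_{y}(V)\gtrsim 1$ follows from compactness of $\Lambda$ and full support of $\nu_x$, which is Sullivan's argument. The problem is that the cusp case, which is the entire content of the theorem beyond Sullivan's lemma, is asserted rather than carried out, and as stated it starts from an incorrect picture: when $y=\xi_x(t)$ lies at depth $d\gg1$ in a cusp there are \emph{no} orbit points close to $y$ --- every point of $\Gamma x$ is at distance at least $d-O(1)$ from $y$. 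What is actually needed is to compare $\nu_y(V(x,\xi,t))$ with the sum of the $\nu_y$-measures of shadows of bounded balls centred at the points $T^n\gamma_0x$ lying on the \emph{boundary} of the horoball, to determine which range of $n$ gives shadows contained in $V(x,\xi,t)$ (lower bound) and which range covers it (upper bound), and to control overcounting by quasi-disjointness of these shadows before applying Sullivan's lemma at each $T^n\gamma_0 x$ (legitimate, since those points are in the thick part). That range of $n$ depends on whether $\xi_x(t)$ sits on the entering or the exiting half of the cusp excursion of $[x\xi)$, and the case $\xi\in\Lambda_p$, where the ray never leaves the cusp, must be treated separately; you list these as ``obstacles'', but they are precisely where the factor $e^{(1-\delta)d}$ comes from and where uniformity of $A$ in $\xi$ is at stake.

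Quantitatively, the estimate you need is not $d(x,T^nx)=2\log|n|+O(1)$ alone but
$$
d\bigl(y,T^n\gamma_0x\bigr)=\max\bigl(2\log|n|-d,\;d\bigr)+O(1),
$$
which gives
$$
\sum_{n\in\Z}e^{-\delta d(y,T^n\gamma_0x)}\;\asymp\;e^{d}\,e^{-\delta d}\;+\;e^{\delta d}\sum_{|n|\ge e^{d}}|n|^{-2\delta}\;\asymp\;e^{(1-\delta)d},
$$
and the convergence of the tail requires $\delta>1/2$. This strict inequality is not automatic from the statement's hypotheses as you use them; it holds because a nonelementary geometrically finite group with parabolic elements has $\delta$ strictly larger than the critical exponent $1/2$ of each parabolic subgroup, and this fact must be invoked, otherwise the truncated Poincar\'e series argument breaks down. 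With these points supplied (shadow covering/disjointness in the cusp, the two halves of the excursion, the parabolic limit points, and $\delta>1/2$), your outline becomes the proof of \cite{Scha1}; without them it is a statement of the strategy rather than a proof.
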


For $\tilde{u}\in T^1\H$, denote by $Pr_{\tilde{u}^+}:(h^s\tilde{u})_{s\in\R}\to\partial \H\setminus\{\tilde{u}^+\}$ 
the natural projection which sends $v$ to $v^-$. 
Small pieces of orbits of $(h^s)$ are almost sent 
to sets of the form $V(x,\xi,t)$, according to the 
following lemma (see for example \cite[lemma 4.4 page 982]{Scha1} for a proof).

\begin{lem}\label{projection} 
There exists a constant $\alpha\geq 0$, 
such that for all 
$\tilde{v}\in T^1\H$, and $t\ge \alpha$, we have 
$$
V(x,\tilde{v}^-,t+\alpha)\subset Pr_{\tilde{v}^+}((h^s\tilde{v})_{|s|\le e^{-t}})
\subset V(x,\tilde{v}^-,t-\alpha)\,,
$$
where $x=\pi(\tilde{v})\in\H$ is the base point of $\tilde{v}$. 
\end{lem}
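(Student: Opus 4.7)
The plan is to combine $\PSL(2,\R)$-equivariance with the commutation relation $g^t h^s=h^{se^{-t}}g^t$ to collapse the statement to a single explicit picture in the upper half-plane. First, I would apply $g^{-t}$ to the three sets appearing in the claim. Since $g^{-t}$ is an isometry, it fixes $\tilde v^-\in\partial\H$ and moves $x=\pi(\tilde v)$ a hyperbolic distance $t$ along the geodesic $(x\tilde v^-)$ toward $\tilde v^-$, so $\pi(u)=(\tilde v^-)_x(t)$ where $u:=g^{-t}\tilde v$. Because $V(x,\xi,t')$ depends only on the geodesic line $(x\xi)$ and on the subray starting at signed distance $t'$ from $x$ toward $\xi$, translating the base point along the geodesic by $t$ yields $V(x,\tilde v^-,t+t')=V(\pi(u),\tilde v^-,t')$. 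Meanwhile, $g^{-t}(h^s\tilde v)=h^{se^t}u$ sends the horocyclic arc $(h^s\tilde v)_{|s|\le e^{-t}}$ to the unit arc $(h^{s'}u)_{|s'|\le 1}$, and the geodesic flow fixes backward endpoints, so the images under $Pr_{\tilde v^+}=Pr_{u^+}$ coincide. The claim is therefore equivalent, uniformly in $u\in T^1\H$, to
\[
V(\pi(u),u^-,\alpha)\ \subset\ \bigl\{(h^{s'}u)^-:|s'|\le 1\bigr\}\ \subset\ V(\pi(u),u^-,-\alpha).
\]

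Second, by transitivity of $\PSL(2,\R)$ on $T^1\H$ and by the isometry-equivariance of every ingredient (horocycle piece, backward endpoint, the set $V$, and orthogonal projection onto a geodesic), it suffices to verify this reduced claim for a single convenient $u$. I would take $u=\tilde v_0$, the upward unit vector at $i\in\H$ in the upper half-plane model, so that $u^+=\infty$, $u^-=0$, and $\pi(u)=i$.

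Third, I would compute both sides explicitly in this model. The strong stable horocycle through $\tilde v_0$ is the horizontal line $\{(s',1):s'\in\R\}$, and $h^{s'}\tilde v_0$ is the upward vector at $(s',1)$, whose downward geodesic meets $\partial\H$ at $s'$; hence
\[
\bigl\{(h^{s'}\tilde v_0)^-:|s'|\le 1\bigr\}=[-1,1].
\]
The geodesic $(i0)$ is the positive imaginary axis, and $0_i(\beta)=(0,e^{-\beta})$. For $\eta=s\in\R$, the foot of the hyperbolic perpendicular from $\eta$ to this axis is $(0,|s|)$, since it is the unique axis point lying on the Euclidean semicircle of radius $|s|$ centered at the origin through $\eta$; consequently
\[
V(i,0,\beta)=\{s\in\R:|s|\le e^{-\beta}\}.
\]
Both inclusions then hold already with $\alpha=0$, hence a fortiori with any $\alpha\ge 0$.

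The only delicate step is the bookkeeping in the first reduction: one must check that $g^{-t}$ converts the horocyclic radius $e^{-t}$ to $1$, that backward endpoints are preserved, and that the base-point shift of $V$ changes its second numerical parameter by exactly $t$. Once this is settled, the model computation is immediate. The possibility $\alpha>0$ left in the statement is pure slack in the hyperbolic-plane setting, presumably retained so that the same formulation applies verbatim in contexts of variable curvature or higher dimension, where the analogous reduction is only approximate.
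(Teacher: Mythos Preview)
The paper does not supply its own proof of this lemma; it merely refers the reader to \cite[Lemma~4.4]{Scha1}. Your argument is correct and self-contained. The two reductions are sound: (i) since $\pi(g^{-t}\tilde v)=(\tilde v^-)_x(t)$ lies on the geodesic $(x\tilde v^-)$, the identity $V(x,\tilde v^-,t\pm\alpha)=V(\pi(u),u^-,\pm\alpha)$ follows directly from the definition of $V$, while the commutation relation $g^{-t}h^s=h^{se^t}g^{-t}$ together with the invariance of backward endpoints under the geodesic flow turns the horocyclic arc of radius $e^{-t}$ into the unit arc about $u=g^{-t}\tilde v$; (ii) all four ingredients --- the horocyclic flow, the map $w\mapsto w^-$, the base-point projection $\pi$, and the sets $V$ --- are equivariant under the left $\PSL(2,\R)$-action, so transitivity on $T^1\H$ reduces everything to the upward vector at $i$. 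Your explicit computation there is correct: the backward endpoint of $h^{s'}u_0$ is $s'\in\R$, and the orthogonal projection of $s\in\R$ onto the imaginary axis is $(0,|s|)$, so both the middle set and $V(i,0,0)$ equal $[-1,1]$.

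Your remark that $\alpha=0$ already works in $\H$ is accurate; the constant $\alpha$ in the cited reference is there to accommodate variable curvature (or a comparison with metric balls rather than horocyclic arcs), where the correspondence between horocyclic pieces and shadows is only approximate.
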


\begin{proof}[Proof of Proposition \ref{mesure-des-boules-horospheriques}]
First note that it is sufficient to prove the Proposition for a vector $u$ which is radial. 
Indeed, $\tau$ is continuous, the set of radial vectors is dense in $\Omega$, 
and $C$ is an absolute constant valid for all radial vectors. 
 
Let $u\in \Omega$. Let $t>\alpha$ be the first time such that $g^tu\in K_0$. 
Choose  the constant $A$ in theorem \ref{shadow-lemma}   so that the theorem 
is valid with this same constant $A$ for all points $y\in\Gamma K_0$.  
Let $x=\pi(g^t\tilde{u})$ be the base point of $g^t\tilde{u}$. 

We have $\tau(u)=e^{\delta t}\mu_{H^-}((h^sg^tu)_{|s|\le e^{-t}})$. 
Moreover, 
$V(x,\tilde{u}^-,t+\alpha)\subset  Pr_{\tilde{u}^+}((h^s\tilde{u})_{|s|\le e^{-t}})
\subset V(x,\tilde{u}^-,t-\alpha)$. 
And for $\xi\in V(x,\tilde{u}^-,0)$, we have $|\beta_{\xi}(x,\pi(h^sg^Tu))|\le 1$.
Thus, using the expression 
$d\mu_{H^-(g^tu)}(h^sg^tu)=
e^{\delta\beta_{(h^sg^t\tilde{u})^-}(x,\pi(h^sg^T\tilde{u})}d\nu_x((h^sg^t\tilde{u}^-)$, lemma \ref{projection} and theorem \ref{shadow-lemma}, we get 
\begin{eqnarray*}
\mu_{H^-}((h^sg^tu)_{|s|\le e^{-t}})&\le& 
\mu_{H^-}(Pr_{\tilde{u}^+}^{-1}(V(x,\tilde{u}^-,t-\alpha))\\
&\le& C\nu_{x}(V(x,\tilde{u}^-,t-\alpha))
\le D e^{-\delta t}e^{(1-\delta)d(\pi(u),K_0)}\,.
\end{eqnarray*}
Similarly, we get $\mu_{H^-}((h^sg^tu)_{|s|\le e^{-t}})\ge D^{-1}e^{-\delta t}e^{(1-\delta)d(\pi(u),K_0)}$, so 
that $\tau(u)=D^{\pm 1} e^{-\delta t}e^{(1-\delta)d(\pi(u),K_0)}$. 
\end{proof}




\subsection{Integrability of $\tau$}

We now discuss the proof of Theorem \ref{integrability-of-balls}.

We will follow closely the method of \cite{DOP} 
where they give criteria of finiteness of $m^{ps}$ 
in terms of convergence of Poincar\'e series. 
The constants that appear in the proof differ 
from one step to another, but are often denoted by $C$. 

 Recall that, as $S$ is geometrically finite, it is the union of 
 finitely many cusps $C_i$, $1\le i\le k$, a compact part $K_0$, and finitely many funnels. 
The Patterson-Sullivan measure $m^{ps}$ has its support in $\Omega$, and 
$\Omega=\Omega\cap T^1K_0\sqcup (\cup_{i=1}^k\Omega\cap T^1C_i)$. 
Since $m^{ps}$ is finite on compact sets,  
we just need to study the integrability of 
the map $\tau$ in a fixed cusp $C_1$. 

By proposition \ref{mesure-des-boules-horospheriques},
we know that this function is, up to multiplicative constants, 
equal to $\displaystyle f(u)=\exp\left((1-\delta)d(\pi(u),K_0)\right)$. 
It is sufficient to check the integrability of $f$ on $T^1C_1$. 

We lift $C_1$ to a horoball $Hor(\xi_1,t_1)=H$. 
The stabilizer $\Pi_1=Stab_{\Gamma}(\xi_1)$ 
acts cocompactly on $\left(\partial H\right)\setminus\{\xi_1\}$, 
and on $\Lambda\setminus\{\xi_1\}$.
 By choosing one of the two generators of $\Pi_1$, 
we will consider elements of $\Pi_1$ as elements of $\Z$. 
Let $I_0$ be a connected relatively compact 
fundamental domain for the action of $\Pi_1$ on $H$, 
and $J_0\subset\Lambda\setminus\{\xi_1\}$ 
its image under the natural projection 
$Pr_{\xi_1}:\partial H\setminus\{\xi_1\}\to \Lambda\setminus\{\xi_1\}$. 
This set $J_0$ is the fundamental domain 
for the action of $\Pi_1$ on $\Lambda\setminus\{\xi_1\}$. 
Without loss of generality, we can assume that $o$ belongs to $I_0$. 

Up to a set of $m^{ps}$-measure zero, we can lift a vector $v\in T^1C_1$ to a vector 
$\tilde{v}\in T^1 H$ 
in such a way that $v^-\in J_0$, and $v^+\in p.J_0$, for 
some $p\in \Pi_1$. 
Define $0\le t(v)\le T(v)$ by the fact that $g^{-t(v)}\tilde{v}$ and $g^{T(v)-t(v)}\tilde{v}$ belong
to $T^1\partial H$. In other words, $t(v)$ is the 
length of $(v^-v^+)$ between $I_0$ and $\tilde{v}$, 
and $T(v)$ is the total time spent by $(v^-v^+)$ in $H$. 

Define $q_1,q_2\in \Pi_1$ by the fact that $\pi(g^{-t(v)}\tilde{v})\in q_1.I_0$ and 
$\pi(g^{T(v)-t(v)}\tilde{v})\in q_2.I_0$. As all hyperbolic triangles are thin, 
by considering the triangle $(v^-,v^+,\xi_1)$, 
it is easy to see that $d(o,q_1.o)$ and $d(p.o,q_2.o)$ are bounded by a constant 
depending only on the diameter of hyperbolic triangles and on the diameter of $K_0$.

Remark that $d(\pi(v),K_0)=d(\pi(\tilde{v}),\partial H)$. 

\begin{lem}\label{distance-au-bord-horosphere} 
There exists a constant $C>0$,
 such that if $v\in T^1C_1$ is lifted to 
$\tilde{v}\in T^1 H$ as above, and $g^{-t(v)}\tilde{v}\in T^1 \partial H$, 
then for all $0\le t\le T(v)$, 
$$
d(\pi(g^tg^{-t(v)}\tilde{v}),\partial H)=\min(t,T(v)-t)\pm C \,.
$$
Moreover, $T(v)$  depends only on $v^-$ and $v^+$, and 
we have $\displaystyle T(v)=d(o,p.o)\pm C\,.$
\end{lem}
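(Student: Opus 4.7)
The plan is to work in the upper half-plane model after conjugating $\xi_1$ to $\infty$, so that $H = \{z \in \H : \operatorname{Im} z \ge y_0\}$ for some $y_0 > 0$, the stabilizer $\Pi_1$ acts by horizontal translations $z \mapsto z + n\ell$ for some $\ell > 0$, and $o \in \partial H$ (which is consistent with taking $I_0$ to be a compact fundamental arc of $\Pi_1$ on $\partial H$, so that $\Pi_1.o \subset \partial H$). In these coordinates the geodesic $(v^-v^+)$ is a euclidean half-circle orthogonal to $\R$, the distance to $\partial H$ is simply $d(z, \partial H) = \log(\operatorname{Im}(z)/y_0)$ whenever $z \in H$, and the hyperbolic arclength inside $H$ can be computed explicitly.

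For the first assertion, the upper bound $d(\pi(g^t g^{-t(v)}\tilde v), \partial H) \le \min(t, T(v) - t)$ follows from the triangle inequality, since by definition the base points of $g^{-t(v)}\tilde v$ and $g^{T(v) - t(v)}\tilde v$ both lie on $\partial H$. For the lower bound I would parametrize the chord explicitly: if the half-circle has euclidean radius $r \ge y_0$, one gets $T(v) = 2\log\bigl((r+\sqrt{r^2-y_0^2})/y_0\bigr)$, and after a direct computation of the height as a function of hyperbolic arclength (say, via the substitution $u = \tan(\theta/2)$), one arrives for $0 \le t \le T(v)/2$ at a closed formula of the shape
\[
d(\pi(g^t g^{-t(v)}\tilde v), \partial H) \;=\; t \;-\; \log\!\left(\frac{1 + e^{2t - T(v)}}{1 + e^{-T(v)}}\right),
\]
and the symmetric formula for $T(v)/2 \le t \le T(v)$. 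The logarithmic correction here is always between $0$ and $\log 2$, which immediately gives $\min(t, T(v)-t) - \log 2 \le d \le \min(t, T(v)-t)$, i.e.\ the claim with (for instance) $C = \log 2$.

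For the second assertion, that $T(v)$ depends only on $v^-, v^+$ is immediate from the definition: $T(v)$ is the hyperbolic length of $(v^-v^+) \cap H$. To compare it with $d(o, p.o)$, observe that horoballs are geodesically convex in $\H^2$, so since $o$ and $p.o$ both lie on $\partial H$, the geodesic segment $[o, p.o]$ is entirely contained in $\overline H$ and has length exactly $d(o, p.o)$; equivalently, it is the chord inside $H$ of the unique geodesic through $o$ and $p.o$, whose endpoints on $\R$ are symmetric about $\ell p/2$ at distance $\sqrt{(\ell p/2)^2 + y_0^2}$ from that point. On the other hand the geodesic $(v^-v^+)$ has endpoints $v^- \in J_0$ and $v^+ \in p.J_0$ lying in subsets of $\R$ of uniformly bounded diameter around $0$ and $\ell p$ respectively, so its euclidean radius $r = |v^+ - v^-|/2$ differs from $|\ell p|/2$ by $O(1)$. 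Applying the formula $T = 2\log\bigl((r+\sqrt{r^2-y_0^2})/y_0\bigr)$ to both half-circles, and noting that this function is uniformly Lipschitz in $r$ away from $r = y_0$ while both quantities remain bounded in the regime $r \to y_0^+$ (equivalently $p \to 0$), one gets $|T(v) - d(o,p.o)| \le C$ uniformly; this is the main — and essentially only — technical point, and it is handled cleanly by splitting into the cases $|p|$ large and $|p|$ bounded.
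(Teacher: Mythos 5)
Your proof is correct, but it takes a genuinely different route from the paper's. You fix the model $\xi_1=\infty$, $H=\{\mathrm{Im}\,z\ge y_0\}$ and compute everything exactly: the chord length $T=2\log\bigl((r+\sqrt{r^2-y_0^2})/y_0\bigr)$, the exact height along the chord (your formula $d=t-\log\bigl((1+e^{2t-T})/(1+e^{-T})\bigr)$ for $0\le t\le T/2$ is indeed correct, with a correction term lying in $[0,\log 2]$), and then a Lipschitz bound for $r\mapsto 2\log\bigl((r+\sqrt{r^2-y_0^2})/y_0\bigr)$ (derivative $2/\sqrt{r^2-y_0^2}$) combined with $\bigl|r-|\ell p|/2\bigr|=O(1)$, which legitimately uses the paper's normalization $o\in I_0\subset\partial H$. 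The paper argues instead by coarse hyperbolic geometry: it looks at the ideal triangle $(v^-,v^+,\xi_1)$, uses thinness of triangles to project $\pi(g^{t-t(v)}\tilde v)$ onto one of the two sides ending at $\xi_1$, along which the distance to $\partial H$ is exactly the Busemann parameter, and for the second assertion it uses the auxiliary elements $q_1,q_2\in\Pi_1$ with $\pi(g^{-t(v)}\tilde v)\in q_1.I_0$ and $\pi(g^{T(v)-t(v)}\tilde v)\in q_2.I_0$, together with the previously noted bounds on $d(o,q_1.o)$ and $d(p.o,q_2.o)$ coming again from thin triangles. Your computation buys explicit constants (e.g.\ $\log 2$ in the first estimate) and a completely self-contained verification, but it is tied to the constant-curvature two-dimensional model; the paper's argument is model-free, which is precisely what allows the remark following the proof (higher rank cusps, higher dimension, and the variable-curvature analogue mentioned in the introduction) to go through without modification.
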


\begin{proof} Consider the hyperbolic triangle $(v^-,v^+,\xi_1)$. 
As $\H$ is a hyperbolic metric space in the sense of Gromov, 
all triangles are thin. 
In particular, there is a universal constant $D>0$, such that 
the distance between $\pi(g^{t-t(v)}\tilde{v})$ and 
one of the other sides $(v^-\xi_1)$ or $(v^+\xi_1)$ is less than $D$. 
Let $x_t$ be the projection of $\pi(g^{t-t(v)}\tilde{v})$ on this closest side. 
Assume first 
that $x_t\in (v^-\xi_1)$. 
Let $x$ be the intersection of $(v^-\xi_1)$ with $\partial H$. 
By definition of a horoball centered in $\xi_1$, 
we have $d(x_t,\partial H)=d(x_t,x)$. 
As $(v^-\xi_1)$ and $(v^-v^+)$ are negatively asymptotic, 
we have $d(\pi(g^{-t(v)}\tilde{v}),x)\le d(\pi(g^{t-t(v)}\tilde{v}),x_t)\le D$.  
It implies that $|d(x_t,x)-d(\pi(g^{t-t(v)}\tilde{v}),\pi(g^{-t(v)}\tilde{v}))|\le 2D$. 

In particular, we deduce that 
$$
d(\pi(g^{t-t(v)}\tilde{v}),\partial H)=d(x_t,\partial H)\pm D=t\pm 3D \,.
$$

If $x_t$ belongs to $(v^+\xi_1)$, 
the same reasoning with the other intersection $\pi(g^{T(v)-t(v)}\tilde{v})$ of $(v^-v^+)$ 
with $\partial H$ will imply that 
$$
d(\pi(g^{t-t(v)}\tilde{v},\partial H)=\min(t,T(v)-t)\pm 3D,.$$

It remains to compare $T(v)$ with $d(o,p.o)$. 
By definition of $q_1$ and $q_2$, 
we know that $\pi(g^{-t(v)}\tilde{v})\in q_1.I_0$ and $\pi(g^{T(v)-t(v)}\tilde{v})\in q_2.I_0$, so that 
$T(v)=d(q_1.o,q_2.o)\pm 2diam(I_0)=d(o,p.o)\pm 2diam(I_0)\pm d(p.o,q_2.o) \pm d(o,q_1.o)$. 
Since $d(o,q_1.o)$ and $d(p.o,q_2.o)$ are bounded, the lemma is proved. 
\end{proof}

Let us continue now the proof of the theorem. 
For any function $f:T^1C_1\to \R$, lifted in $\tilde{f}:T^1H\to\R$, we have
\begin{align}\label{decomposition-de-la-mesure}
\int_{T^1C_1}f\,dm^{ps} = \sum_{p\in\Pi_1} 
\int_{J_0\times p.J_0} e^{\delta(\beta_{v^+}(o,\pi(v))+\beta_{v^-}(o,\pi(v))) }\nonumber\\
 \left(\int_{[0,T(v)]}\tilde{f}(g^tg^{-t(v)}\tilde{v})dt\right) d\nu_o(v^+)d\nu_0(v^-)  
\end{align}

The triangular inequality implies that 
$|\beta_{v^+}(o,\pi(v))+\beta_{v^-}(o,\pi(v))|\le 2d(o,(v^-v^+))\le 2 d(o,q_1.o)+diam(I_0)$, which is bounded. 
Thus, up to some multiplicative constants, for any nonnegative continuous function $f$ on $T^1C_1$, 
the integral $\int_{T^1C_1}f dm^{ps}$ is equal to 
$$
\sum_{p\in \Pi_1}\int_{J_0}d\nu_0(v^-)\int_{p.J_0}d\nu_o(v^+)\int_{[0,T(v)]}\tilde{f}(g^tg^{-t(v)}\tilde{v})dt\,.
$$
Apply this in the particular case where $f(v)=e^{(1-\delta)d(\pi(v),K_0)}$. 

By lemma \ref{distance-au-bord-horosphere}, there exists a constant such that 
$\tilde{f}(\tilde{v})=C^{\pm 1} e^{(1-\delta)\min(t(v),T(v)-t(v))}$. 
Observe that for $-t(v)\le t \le T(v)-t(v)$, we have $t(g^t v)=t(v)+t$ and $T(g^t v)=T(v)$, so that, 
  for another constant $C\ge 1$, we have
$$
\int_{[0,T(v)]}\tilde{f}(g^{t-t(v)}\tilde{v})dt= C^{\pm 1} e^{\frac{(1-\delta)}{2}d(o,p.o)}\,.
$$

Coming back to (\ref{decomposition-de-la-mesure}), we obtain 
$$
\int_{T^1C_1}f dm^{ps}
= C^{\pm 1}\sum_{p\in \Pi_1} e^{\frac{(1-\delta)}{2}d(o,p.o)}
\int_{J_0\times p.J_0}d\nu_o(v^-)d\nu_o(v^+)\,.
$$
We know that $d\nu_{p^{-1}.o}(\xi)=e^{- \delta \beta_\xi(p^{-1}.o,o)}d\nu_o(\xi)$. 
If $\xi\in J_0$, the triangular inequality implies $\beta_\xi(p.o,o)=d(o,p.o)\pm cst$, so 
$$\nu_o(p.J_0)=(p^{-1}_*\nu_o)(J_0)=\nu_{p^{-1}.o}(J_0)= C^{\pm 1} \nu_o(J_0) e^{- \delta d(o,p.o)}.$$
Our integral is now estimated by
$$
\int_{T^1C_1}f dm^{ps}=C^{\pm 1}\sum_{p\in\Pi_1}e^{\frac{1-3\delta}{2}d(o,p.o)}
$$
We now wish to estimate $d(o,p.o)$. In the upper-half plane, we can assume that $\partial H=\{ z \, : \, Im(z)=1 \}$, then
$\Pi_1=\{ z\mapsto z+n\kappa  \, , \, n\in \N \}$, for some $\kappa>0$. 
Then, using the exact formula for the distance on $\H$, one has
$$\cosh(d(o,p.o))=1+\frac{|o-po|^2}{2Im(o)Im(po)},$$
so
$$
e^{d(o,po)}=\kappa^2 p^2+O(1).
$$

The series $\sum_{p\in\Pi_1}e^{\frac{1-3\delta}{2}d(o,p.o)}$ behaves therefore as 
$\sum_{n\in\N}n^{1-3\delta}$, and converges if and only if $1-3\delta<-1$, that is $\delta>2/3$. This concludes the proof of Theorem \ref{integrability-of-balls}.

\begin{rem} In higher dimension, the only differences in the proof are that 
$\tau(u)\asymp e^{(k-\delta/2)d(u,K_0)}$ if $u$ belongs to a cusp of rank $k$, and 
$\Pi_1\simeq \Z \kappa_1\oplus \Z\kappa_2\dots \oplus\Z\kappa_k$, so that 
the series to estimate is $\sum_{(p_1,\dots,p_k)\in\Z^k}(p_1^2\kappa_1^2+\dots+p_k^2\kappa_k^2)^{\frac{k-3\delta}{2}}$. 
And this series converges iff $\delta>2k/3$.
\end{rem}


\subsection{An almost sure log Cesaro convergence}
\label{proofofalmostsurecesaro}

 The next Lemma will be useful for the case when $S$ has cusps,
as we do not know if $\tau$ is uniformly continuous in that case.
\begin{lem}\label{indephoro} Assume that $S$ is geometrically finite.
 Let $u \in \mathcal{E}$ be a non-periodic vector for the horocyclic flow, and $s_0 \in \R$. Then
 $$\lim_{t\rightarrow +\infty} \frac{\tau(g^th^{s_0}u) }{\tau(g^tu)}=1.$$ 
\end{lem}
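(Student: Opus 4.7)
The plan is to reduce lemma \ref{indephoro} to the main equidistribution theorem \ref{magic-equivalent}, applied in parallel to the vectors $u$ and $h^{s_0}u$, and then to compare the resulting asymptotics. The commutation relation $g^t h^{s_0} = h^{s_0 e^{-t}}g^t$ suggests that $g^t h^{s_0}u$ is very close to $g^t u$ for large $t$, so the content of the lemma is really the continuity of $\tau$ \emph{along horocycles at infinity}; but rather than trying to prove uniform horocyclic continuity of $\tau$ directly (which is what fails in the cusps), the idea is to extract it as a corollary of the precise equidistribution asymptotic.

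First I would check that $h^{s_0}u$ still satisfies the hypotheses of theorem \ref{magic-equivalent}. Since the non-wandering set $\mathcal{E}$ is invariant under the horocyclic flow, we have $h^{s_0}u\in\mathcal{E}$, and since $h^{s_0}u$ lies on the same horocyclic orbit as $u$, it is non-periodic as well. Next I would choose a continuous, compactly supported function $f:T^1S\to\R$ with $\int_{T^1S}f\,dm>0$ (any nonnegative bump function supported in a small open subset of $\mathcal{E}$ where $m$ has positive mass will do). Applying theorem \ref{magic-equivalent} to $u$ and to $h^{s_0}u$ then produces the two limits
$$\frac{1}{t^\delta\tau(g^{\log t}u)}\int_{-t}^t f(h^s u)\,ds\longrightarrow \frac{\int f\,dm}{m^{ps}(T^1S)}>0,$$
$$\frac{1}{t^\delta\tau(g^{\log t}h^{s_0}u)}\int_{-t}^t f(h^{s+s_0}u)\,ds\longrightarrow \frac{\int f\,dm}{m^{ps}(T^1S)}>0.$$

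Finally I would compare the two horocyclic integrals. A translation of the integration variable writes
$$\int_{-t}^t f(h^{s+s_0}u)\,ds=\int_{-t+s_0}^{t+s_0}f(h^s u)\,ds,$$
so the two integrals over $[-t,t]$ and its translate differ by at most $2|s_0|\,\|f\|_\infty$. On the other hand, theorem \ref{magic-equivalent} asserts that $t^\delta\tau(g^{\log t}u)\to+\infty$, so the integral $\int_{-t}^t f(h^s u)\,ds$ also tends to $+\infty$, and the ratio of the two integrals therefore tends to $1$. Dividing the two displayed limits then gives
$$\lim_{t\to+\infty}\frac{\tau(g^{\log t}h^{s_0}u)}{\tau(g^{\log t}u)}=1,$$
which is the lemma after the substitution $t=e^T$.

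I do not anticipate any real obstacle. The only point requiring verification is that the hypotheses of theorem \ref{magic-equivalent} transfer from $u$ to $h^{s_0}u$, and this is immediate from the horocyclic invariance of $\mathcal{E}$; the only delicate ingredient, namely that $t^\delta\tau(g^{\log t}u)\to +\infty$ (used to pass from an additive to a multiplicative comparison), is already built into theorem \ref{magic-equivalent}.
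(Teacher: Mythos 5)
Your proposal is correct and follows essentially the same route as the paper: apply Theorem \ref{magic-equivalent} to both $u$ and $h^{s_0}u$ with a fixed test function of positive $m$-integral, observe that the two horocyclic integrals differ by at most $2|s_0|\|f\|_\infty$, and use the divergence $t^\delta\tau(g^{\log t}u)\to+\infty$ to turn this additive bound into the multiplicative conclusion. The only difference is cosmetic — you divide the two limits directly, whereas the paper argues via $\varepsilon$-inequalities and a limsup/liminf symmetry — so the substance is identical.
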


\begin{proof} 
 Let $f\geq 0$ be a continuous, compactly supported function with nonzero $m$-integral.
 Let $\epsilon>0$. 
By Theorem \ref{magic-equivalent}, we have that for all $t>0$ sufficiently large,
 $$
e^{-\epsilon} t^\delta \tau(g^th^{s_0}u)\int_\mathcal{E} fdm 
\leq \int_{-t}^t f(h^{s+s_0}u)ds,
$$
 and similarly
 $$
\int_{-t}^t f(h^{s}u)ds\leq 
e^\epsilon t^\delta  \tau(g^t u)\int_\mathcal{E} fdm\,.
$$
Since $\left| \int_{-t}^t f(h^{s+s_0}u)ds-\int_{-t}^t f(h^{s}u)ds \right| \leq 2s_0\|f\|_\infty$, 
we have
$$
e^{-\epsilon} t^\delta \tau(g^th^{s_0}u)\int_\mathcal{E} fdm 
\leq e^{\epsilon}t^\delta  \tau(g^t u)\int_\mathcal{E} fdm + 2s_0\|f\|_\infty. 
$$
 Thus,
$$
\frac{\tau(g^th^{s_0}u)}{\tau(g^t u)} \leq 
e^{2\varepsilon} + \frac{2s_0\| f\|_\infty}{t^\delta \tau(g^t u)\int_\mathcal{E} fdm}.
$$
As $t^\delta \tau(g^t u)\rightarrow +\infty$ when $t\to +\infty$, this proves that 
$\limsup_{t\rightarrow +\infty} \frac{\tau(g^th^{s_0}u)}{\tau(g^t u)}\leq 1$. 
The liminf is obtained by reversing the roles of $u$ and $h^{s_0}u$.
\end{proof}

We now prove Theorem \ref{almost-sure-cesaro}. 
By assumption, $\tau \in L^1(m^{ps})$. Let $f$ be as in the Theorem.
By Birkhoff ergodic theorem, for $m^{ps}$-a.e. $u\in\Omega$, we have
$$
\lim_{\rho \rightarrow +\infty} \frac1{\rho}\int_0^\rho \tau(g^t u)dt= 
\int_\Omega \tau \frac{dm^{ps}}{m^{ps}(\Omega)}.
$$
Thanks to lemma \ref{indephoro}, the set of $u\in T^1S$ such that the above convergence 
holds is saturated by the leaves of the horocyclic flow. 
Thus, for $\hat{\mu}$-a.e. $(u^+,t)\in\mathcal{H}$, and all $u^-\neq u^+$, the convergence
holds for $u=(u^-,u^+,t)$. 
In particular, for $\bar{\mu}$-a.e. $\vu\in\R^2\setminus\{0\}$, the convergence holds 
for $u=\Gamma\Psi(\vu)\in T^1S$. \\

 Let $\epsilon,\eta>0$ be arbitrary, decompose
$f=\sum_i f_i,$ as a finite sum of nonnegative nonzero continuous functions 
such that $v^{(\vu)}(f_i)\leq e^\epsilon$. 
From (\ref{eq: majoration3}), we deduce that for all $T\geq T_0$ large enough,
$$
\frac{\sum_{\gamma \in \Gamma_T}f_i\left(\frac{\gamma \vu}{T}\right)}{T^{(1+\alpha)\delta}}
\leq \frac{2e^{\eta+2\epsilon} \tau(g^{(1-\alpha)\log T-\log r_i+\eta} u)}{m^{ps}(T^1S)} 
\int_{\R^2}  \frac{f_i(\vv)}{(\vv\star \vu)^\delta}d\bar{\mu}(\vv),
$$
where $r_i=r^{(\vu)}(f_i)$.
Integrating over $T$ with respect to the measure $dT/T$, one has
\begin{multline*}
\int_{T_0}^S \frac1{T^{(1+\alpha)\delta}}\sum_{\gamma \in \Gamma_T}f_i\left(\frac{\gamma \vu}{T}\right)\frac{dT}{T}\\
\leq \frac{2 e^{\eta+2\epsilon}}{m^{ps}(T^1S)} \int_{\R^2}  \frac{f_i(\vv)}{(\vv\star \vu)^\delta}d\bar{\mu}(\vv)
\int_{T_0}^S \tau(g^{(1-\alpha)\log T-\log r_i+\eta} u)\frac{dT}{T}.
\end{multline*}
However,
$$
\int_{T_0}^S \tau(g^{(1-\alpha)\log T-\log r_i+\eta} u)\frac{dT}{T}=
\frac1{(1-\alpha)} \int_{(1-\alpha)\log T_0-\log r_i+\eta}^{(1-\alpha)\log S-\log r_i+\eta}\tau(g^t u)dt,
$$
which, for large $S$, is equivalent to $m^{ps}(\Omega)^{-1} \int_\Omega \tau dm^{ps} \log S$, as by assumption $u$ is generic.
 This proves that
 \begin{multline*}
 \limsup_{S\rightarrow +\infty} \frac{1}{\log S} \int_{1}^S \frac1{T^{(1+\alpha)\delta}}\sum_{\gamma \in \Gamma_T}f_i\left(\frac{\gamma \vu}{T}\right)\frac{dT}{T}\\
 \leq 2e^{\eta+2\epsilon}
 \int_\Omega \tau \frac{dm^{ps}}{m^{ps}(\Omega)^2} \int_{\R^2}  \frac{f_i(\vv)}{(\vv\star \vu)^\delta}d\bar{\mu}(\vv).
 \end{multline*}
 The lower bound obtained from (\ref{eq: minoration3}) is similar. 
Summing over $i$ these inequalities, since $\epsilon$ and $\eta$ were arbitrary, 
yields to Theorem \ref{almost-sure-cesaro}.\\
 

\subsection{Other almost sure results for Gibbs measures} 

Remark that the last part of the above proof of theorem \ref{almost-sure-cesaro} is the 
only place where we used an almost sure argument with respect to $m^{ps}$. 
This argument holds verbatim for any invariant ergodic measure which can be decomposed into a family of measures on the horocycles and a transverse measure. 

We apply it here to get theorem \ref{Gibbs}.
Let us first introduce some notations. 
If $\varphi:T^1S\to \R$ is a H\"older map, it is possible to associate to it a $(g^t)$-invariant measure 
$m_\varphi$ on $T^1S$, which shares a lot of properties with $m^{ps}$, 
the Patterson-Sullivan measure being the Gibbs measure associated to any constant potential.  
We refer for example to \cite{SchapETDS} or \cite{Coudene} 
for the construction of such measures on convex-cocompact or geometrically finite manifolds. 
We will just mention that given a potential $\varphi$, one constructs first a family of measures
$(\nu_x^\varphi)_{x\in\H}$ on the limit set $\Lambda\subset\partial\H$, 
which allow to define a product measure 
$m^\varphi$ on $T^1S$, which is ergodic, mixing, and whose support is $\Omega$. 

The measure $m^\varphi$ induces a family 
$(\mu_{H^-}^\varphi)$ of measures on horocycles that vary transversally continuously, and a 
measure $\hat{\mu}^\varphi$ on $\mathcal{H}$ which is $\Gamma$-quasi-invariant, and satisfies 
$d\hat{\mu}^\varphi(\xi,t)=e^{\delta^\varphi t}dt\,d\nu_o^\varphi(\xi)$. 
Therefore, exactly in the same way as in section \ref{defpsi}, it induces a measure 
$\bar{\mu}^\varphi$ on $\R^2\setminus\{0\}$, which can be written as 
$d\bar{\mu}^\varphi(r,\theta)=2r^{2\delta^\varphi-1}dr\,d\bar{\nu}_o^\varphi(\theta)$. 
In contrast with $\hat{\mu}$ and $\bar{\mu}$, the measures $\hat{\mu}^\varphi$ and $\bar{\mu}^\varphi$ 
are not $\Gamma$-invariant, but only quasi-invariant, with an explicit H\"older cocycle. 
We can state:

\begin{thm}\label{Gibbs} Assume that $\Gamma_0$ is
 a nonelementary group containing $-I$ as unique element of torsion, 
and is either convex-cocompact, or geometrically finite with
$\int_{T^1S}\tau\,dm^\varphi<\infty$. 
Write $S=\Gamma_0\backslash \H$. Then,
 with the same notations as in Theorem \ref{th:convexcocompact}, 
we have for $\bar{\mu}^\varphi$-almost every $\vu$,

\eq{logcesaro-gibbs}{ 
\lim_{S\rightarrow +\infty} \frac{1}{\log S}\int_1^S \frac{1}{T^{ \delta}} 
 \sum_{\gamma \in \Gamma_T}f\left( \gamma \vu \right) \frac{dT}{T}
= \frac{2\int_{T^1S} \tau dm^{\varphi}}{m^\varphi(T^1S)m^{ps}(T^1S)}
\int_{\R^2} \frac{f(\vv)}{(\vv \star \vu)^{\delta}}d\bar{\mu}(\vv).   
}
The function $\tau$ is the same as in Theorem \ref{magic-equivalent}.
\end{thm}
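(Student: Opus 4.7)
The plan is to imitate the proof of Theorem \ref{almost-sure-cesaro} given in Section \ref{proofofalmostsurecesaro}, substituting the Gibbs measure $m^{\varphi}$ for $m^{ps}$ at the single step where an almost-sure argument is invoked. Everything else, namely the two-sided estimates \eqref{eq: minoration3} and \eqref{eq: majoration3}, depends only on the equidistribution Theorem \ref{magic-equivalent} (whose conclusion involves $m^{ps}$ and $m$ and not the Gibbs data), so those estimates can be reused unchanged.

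First, I would apply Birkhoff's ergodic theorem to the geodesic flow with respect to $m^\varphi$, which is ergodic and for which $\tau \in L^1(m^\varphi)$ by hypothesis (automatic in the convex-cocompact case since then $\tau$ is continuous with support in a bounded neighbourhood of the compact set $\Omega$). This gives, for $m^\varphi$-a.e. $u\in\Omega$,
$$
\lim_{\rho\to+\infty}\frac{1}{\rho}\int_0^\rho \tau(g^t u)\,dt \;=\; \frac{1}{m^\varphi(T^1S)}\int_{T^1S}\tau\,dm^\varphi.
$$
Next, I would invoke Lemma \ref{indephoro}, whose proof uses only Theorem \ref{magic-equivalent} and is thus independent of the choice of invariant measure: it shows $\tau(g^t h^{s_0}u)/\tau(g^t u)\to 1$ for every non-periodic $u\in\mathcal{E}$ and every $s_0\in\R$. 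In particular, if the Birkhoff convergence above holds at a non-periodic $u$, it holds along the entire strong stable horocycle of $u$. Using the decomposition of $m^\varphi$ as a noncommutative product of the transverse measure induced by $\hat\mu^\varphi$ and the horocyclic family $(\mu_{H^-}^\varphi)$, the set of generic points is therefore horocycle-saturated, and its projection to $\mathcal{H}$ has full $\hat\mu^\varphi$-measure. Transporting to $\R^2\setminus\{0\}$ via the bijection $\Phi$ of Section \ref{defpsi}, the Birkhoff convergence above holds at $u=\Gamma\Psi(\vu)$ for $\bar\mu^\varphi$-almost every $\vu$.

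Fix such a $\vu$, let $\varepsilon,\eta>0$, and decompose $f=\sum_i f_i$ into finitely many nonnegative continuous pieces with $v^{(\vu)}(f_i)\le e^\varepsilon$. Applying \eqref{eq: majoration3} with $\alpha=0$ to each $f_i$, integrating against $dT/T$ on $[T_0,S]$, and changing variables via $t=\log T-\log r_i+\eta$, one obtains
$$
\int_{T_0}^S\!\!\frac{1}{T^{\delta}}\sum_{\gamma\in\Gamma_T}f_i(\gamma\vu)\,\frac{dT}{T} \;\le\; \frac{2e^{\eta+2\varepsilon}}{m^{ps}(T^1S)}\int_{\R^2}\!\frac{f_i(\vv)}{(\vv\star\vu)^\delta}d\bar\mu(\vv)\cdot\!\!\int_{\log T_0-\log r_i+\eta}^{\log S-\log r_i+\eta}\!\!\!\tau(g^t u)\,dt.
$$
Dividing by $\log S$ and letting $S\to+\infty$, the genericity of $u$ with respect to $m^\varphi$ gives that the time integral of $\tau(g^t u)$ is asymptotic to $\log S\cdot m^\varphi(T^1S)^{-1}\int_{T^1S}\tau\,dm^\varphi$. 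The analogous lower bound from \eqref{eq: minoration3} is symmetric. Summing over $i$ and letting $\varepsilon,\eta\to 0$ yields \eqref{eq: logcesaro-gibbs}.

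The main point to verify carefully is the horocycle-saturation step: one must ensure that Lemma \ref{indephoro}, originally proven for $m^{ps}$-generic vectors in $\mathcal{E}$, still applies at $m^\varphi$-generic points, and that the resulting saturated set has full measure for the transverse measure induced by $\hat\mu^\varphi$. This is where the decomposition of $m^\varphi$ as a (noncommutative) product over horocycles is essential; since the lemma itself depends only on the equidistribution Theorem \ref{magic-equivalent}, which is intrinsic to the horocyclic flow, no additional hypothesis on $\varphi$ is needed beyond the integrability $\tau\in L^1(m^\varphi)$.
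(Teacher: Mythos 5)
Your proposal is correct and follows essentially the same route as the paper, which proves Theorem \ref{Gibbs} precisely by rerunning the end of the proof of Theorem \ref{almost-sure-cesaro} with $m^{ps}$, $\hat\mu$, $\bar\mu$ replaced by $m^\varphi$, $\hat\mu^\varphi$, $\bar\mu^\varphi$ in the Birkhoff/saturation step, while keeping the estimates \eqref{eq: minoration3} and \eqref{eq: majoration3} unchanged. Your write-up merely makes explicit the details (Lemma \ref{indephoro} for horocycle saturation and the product decomposition of $m^\varphi$) that the paper leaves implicit.
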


Remark that the condition $\int_{T^1S}\tau\,dm^\varphi<\infty$ implies that $m^\varphi$ is finite, 
which is not always the case on geometrically finite hyperbolic surfaces 
(see \cite{Coudene} for conditions ensuring it).

\begin{proof} We do not give additional details,
 it is enough to replace $m^{ps}$ with $m^\varphi$, $\bar{\mu}$ 
with $\bar{\mu}^\varphi$, and $\hat{\mu}$ with $\hat{\mu}^\varphi$
in the end of the proof of theorem \ref{almost-sure-cesaro} above. 
\end{proof}


\section{Large scale}\label{6}

\subsection{Sketch of the proof}
The full proof of theorem \ref{largescale} is quite technical 
in the case of a general norm on $M(2,\R)$, so that we try to present the ideas
to the reader. 

$*$ {\em First step : relate the sum 
$\sum_{\gamma\in \Gamma_T} f(\frac{\gamma\vu}{T})$, for $f$ continuous
 with compact support in $\R^2\setminus\{0\}$,
 to an integral of $\bar{f}$ along a horocycle. } \\
This is done in lemma \ref{relation-horocycle-orbit}. Heuristically, 
the above sum is comparable to an integral $\int_{I(\vu,f).T^2} \bar{f}(h^s g^{-2\log T} u)\,ds$, 
where $I(\vu,f)$ is an interval of the form $(-\Theta^m(\vu,f)-\Theta(\vu,f), -\Theta^m(\vu,f)+\Theta(\vu,f))$. \\

$*$ {\em Second step : conclude the proof for $f$ continuous with compact support in a bounded 
"disk" $\mathcal{D}(\vu)$ in $\R^2\setminus\{0\}$}\\
This is an almost immediate consequence of the first step combined 
with theorem \ref{magic-equivalent-flowed}, which allows
to compute the limit of the above integral. \\

$*$ {\em Third step : prove theorem \ref{largescale} 
for $f$ continuous with  support in $\R^2\setminus\{0\}$. }\\
This follows from the fact (see lemma \ref{probadisque}) that for $T$ large, 
all $\frac{\gamma\vu}{T}$  belong to the "disk" $\mathcal{D}(\vu)$. \\

$*$ {\em Fourth step : prove theorem \ref{largescale}, in the case of the $l^2$-norm}\\
If $f:\R^2\to \R$ is continuous, and its support contains $0$, 
 the only difficulty is to understand the behaviour of $\Gamma_T$ in 
a neighbourhood of $0$. We compute the mass of the limit measure obtained for 
continuous functions with support in $\R^2\setminus\{0\}$  (lemma \ref{probability}), and 
the cardinal of $\Gamma_T$ (lemma \ref{cardinal-orbit}). 
These lemmas allow us to deduce 
 a result of tightness of the probability measures 
$\frac{1}{|\Gamma_T|}\sum_{\gamma\in\Gamma_T}\delta_{\gamma\vu/T}$. 
These measures do not loose mass in the neighbourhood of $0$,
 and this allows to deduce theorem \ref{largescale}
for all continuous functions on $\R^2$. \\

$*$ {\em Last step : prove theorem \ref{largescale} for a general strictly convex norm}\\
Steps 1 to 3 apply for all strictly convex norms on $\SL(2,\R)$. The only thing to prove is to deduce the tightness
in the case of a general norm from the above tightness result in the case of the $l^2$-norm.

\subsection{The maps $\Theta$ and the set $\mathcal{D}(\vu)$}\label{applications-Theta}
The initial vector $\vu$ 
is fixed for the entire section. 
Define
 $$
\kappa(\vu,\vv,s)=  
\left\| \Psi(\vv) \left( \begin{array}{cc} 1 & s \\ 0 & 0 \end{array}\right) \Psi(\vu)^{-1}\right\|.
$$
Thanks to (\ref{eq: propPsi}), for all $\lambda>0$, we have 
 $\kappa(\vu,\lambda \vv,s)=\lambda\kappa(\vu,\vv,s)$.

 Given $\vu,\vv$, by convexity of the chosen norm, 
 the set $\{s\in\R,\,\kappa(\vu,\vv,s)\leq 1\}$ is either empty or is a
 compact interval denoted by $[ \Theta^-(\vu,\vv),\Theta^+(\vu,\vv)]$.
 We use  the convention that $\Theta^+(\vu,\vv)=\Theta^-(\vu,\vv)=0$ when the interval is empty.

 We  denote by $\Theta^m(\vu,\vv)=\frac12(\Theta^-(\vu,\vv)+\Theta^+(\vu,\vv))$ the middle of this interval, and 
 $\Theta(\vu,\vv)=\frac{1}2(\Theta^+(\vu,\vv)-\Theta^-(\vu,\vv))$ its half-length. 

 Let us also define
 $$
\mathcal{D}(\vu)=\{\vv \in \R^2\setminus \{0\}; \Theta(\vu,\vv)>0\}\,,\quad \mbox{and}\quad
\mathcal{D}_0(\vu)=\mathcal{D}(\vu)\cup\{0\}\,.
$$
 In general, one can check that these are non-empty, open, bounded sets of $\R^2$.  
The set $\mathcal{D}_0(\vu)$ can also be described as
 $\mathcal{D}_0(\vu)=\{\vv \in\R^2\,,\, \exists s\in\R,\, \kappa(\vu,\vv,s)<1\}$. 
Note also that, for $\lambda>0$, $\mathcal{D}(\lambda \vu)=\lambda \mathcal{D}(\vu)$,
 and that $\mathcal{D}_0(\vu)$ is a star-shaped set from the point $0$. \\

In the case where $\|.\|$ (resp. $|.|$) is the  $l^2$-norm on $\SL(2,\R)$ (resp. on $\R^2$),
explicit computations give
$$
\kappa(\vu,\vv,s)^2= \frac{|\vv|^2}{|\vu|^2}+s^2|\vv|^2|\vu|^2,
$$
whence we deduce that for $|\vv|\leq |\vu|$,
$$
\Theta^\pm(\vu,\vv)=\frac{\pm 1}{|\vu|.|\vv|}\sqrt{1-\frac{|\vv|^2}{|\vu|^2}},\quad \Theta^m(\vu,\vv)=0,\quad
\mbox{and}\quad
\Theta(\vu,\vv)=\frac{1}{|\vu|.|\vv|}\sqrt{1-\frac{|\vv|^2}{|\vu|^2}}. 
$$
and all these quantities equal $0$ when $|\vv|>|\vu|$. 
Thus, we also have $\mathcal{D}(\vu)=D(0,|\vu|)\setminus\{0\}$, and $\mathcal{D}_0(\vu)=D(0,|\vu|)$.

 In full generality,  the maps 
$\vv\mapsto\Theta^\pm(\vu,\vv)$ may not be continuous on $\mathcal{D}(\vu)$.
 However, it is easy to see that $\Theta^+$ (resp. $\Theta^-$) 
is always upper semi-continuous (resp.  lower semi-continuous),  
and that if one of the functions 
$\Theta^\pm(\vu,.)$ is not continuous at $\vv$, then the set $\{s\,:\, \kappa(\vu,\vv,s)=1 \}$ 
contains an interval. Observe that $\kappa$ is the norm of a matrix which is an affine function of $s$.
 So the set $\{s\,:\, \kappa(\vu,\vv,s)=1 \}$ 
can contain an interval only if the unitary sphere of $\|.\|$ contains a line segment, so
 that the norm is not strictly convex. 
From now on, we assume 
that it does not happens, so that all  maps $\Theta^{\pm,m}$ defined above are continuous on $\mathcal{D}(\vu)$.

We have to think to $\vv$ as a vector varying in the small support of a continuous function $f$ with compact support
$f:\mathcal{D}(\vu)\to \R$. Therefore,  
we introduce also the following functions. 
Fix a small parameter $\sigma>0$ such that $e^{\pm\sigma}\supp(f)\subset \mathcal{D}(\vu)$, and define
 $$
\overline{\Theta}_\sigma^{+}(\vu,f)=\sup_{ \vv \in e^{\pm\sigma}\supp(f) } \Theta^{+}(\vu,\vv), \; \; 
 \overline{\Theta}_\sigma^{-}(\vu,f)=\sup_{ \vv \in e^{\pm\sigma}\supp(f) } \Theta^{-}(\vu,\vv),
$$
 $$
\underline{\Theta}_\sigma^{+}(\vu,f)=\inf_{ \vv \in e^{\pm\sigma}\supp(f) } \Theta^{+}(\vu,\vv), \; \; 
\underline{\Theta}_\sigma^{-}(\vu,f)=\inf_{ \vv \in e^{\pm\sigma}\supp(f) } \Theta^{-}(\vu,\vv),
$$

 $$\overline{\Theta}^{m}_\sigma(\vu,f)=\frac12(\overline{\Theta}_\sigma^+(\vu,f)+\underline{\Theta}_\sigma^-(\vu,f)), \; \; 
\overline{\Theta}_\sigma(\vu,f)=\frac{1}2(\overline{\Theta}_\sigma^+(\vu,f)-\underline{\Theta}_\sigma^-(\vu,f)),
$$
$$
\underline{\Theta}^{m}_\sigma(\vu,f)=\frac12(\underline{\Theta}_\sigma^+(\vu,f)+\overline{\Theta}_\sigma^-(\vu,f)), \; \; 
\underline{\Theta}_\sigma(\vu,f)=\frac{1}2(\underline{\Theta}_\sigma^+(\vu,f)-\overline{\Theta}_\sigma^-(\vu,f)).
$$
 
By definition, for all $\vv \in e^{\pm\sigma}\mbox{supp}(f)$, we have
$$
\left(\overline{\Theta}^-_\sigma(\vu,f),\underline{\Theta}^+_\sigma(\vu,f)\right)\subset 
\left(\Theta^-(\vu,\vv),\Theta^+(\vu,\vv)\right)
\subset \left(\underline{\Theta}^-_\sigma(\vu,f),\overline{\Theta}^+_\sigma(\vu,f)\right)\,,
$$
or equivalently
$$
(\underline{\Theta}^m_\sigma(\vu,f)\pm\underline{\Theta}_\sigma(\vu,f)) 
\subset \left(\Theta^-(\vu,\vv),\Theta^+(\vu,\vv)\right)\subset
(\overline{\Theta}^m_\sigma(\vu,f)\pm\overline{\Theta}_\sigma(\vu,f)) \,.
$$

 By continuity of $\Theta^\pm$ on $\mathcal{D}_0(\vu)$, 
given $\eta>0$, we can find $\alpha>0$, such that for all $\vv_0\in \mathcal{D}(\vu)$, and 
all continuous functions with compact support in $B(\vv_0,\alpha)$
 with $e^{\pm \sigma} \mbox{supp}(f)\subset\mathcal{D}(\vu)$, 
these functions satisfy for all $\vv\in\mbox{supp}(f)$, 
$|\Theta^\pm(\vu,f)-\overline{\Theta}^\pm_\sigma(\vu,\vv)|\le \eta$, and similar approximations for  
$\overline{\Theta}_\sigma^m, \underline{\Theta}_\sigma^m,\underline{\Theta}_\sigma,\overline{\Theta}_\sigma$.


\subsection{Relation between $\Gamma_0$-orbits on $\R^2\setminus\{0\}$ and integrals along horocycles} 
 
 Fix a continuous function $f$ of compact support on
 $\mathcal{D}(\vu)$. Recall that $\vu\neq 0$ is fixed.
 
 \begin{lem}\label{relation-horocycle-orbit} Let $\sigma>0$. 
For all $T$ sufficiently large, and all $\gamma \in\Gamma_0$ such that $\frac{\gamma \vu}T \in \supp(f)$,
 \begin{itemize}
 \item If $||\gamma||\leq T$, then 
  $\displaystyle 
\int_{-\overline{\Theta}_\sigma^+(\vu,f)T^2-1}^{-\underline{\Theta}_\sigma^-(\vu,f)T^2+1} 
\tilde{f}(\gamma \Psi(\vu)a_{-2\log T} n_s)ds=f\left(\frac{\gamma \vu}{T}\right),
 $
 \item If $||\gamma||> T$, then
  $\displaystyle 
\int_{-\underline{\Theta}_\sigma^+(\vu,f)T^2+1}^{-\overline{\Theta}_\sigma^-(\vu,f)T^2-1}
 \tilde{f}(\gamma \Psi(\vu)a_{-2\log T} n_s)ds=0\,. 
$
 \end{itemize}
 \end{lem}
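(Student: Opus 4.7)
The plan is to convert $\tilde{f}(\gamma\Psi(\vu)a_{-2\log T}n_s)$ into a product $f(\gamma\vu/T)\cdot\phi(\text{affine in }s)$, so that the $s$-integral reduces to deciding whether the integration domain contains or misses the bump of $\phi$. Exploiting $(\ref{eq: propPsi})$ with $e^t=1/T$, I rewrite $\Psi(\vu)a_{-2\log T}=\Psi(\vu/T)$ and set $\vu_T:=\vu/T$; as $N$ fixes $\vu_0$ this yields $\gamma\Psi(\vu_T)n_s\cdot\vu_0=\gamma\vu_T=\gamma\vu/T$. For the cocycle factor, $c_{\vu_0}(\Psi(\vw))=0$ (immediate since $\Psi(\vu_0)=I$), so $(\ref{eq: propc1})$ and $(\ref{eq: propc2})$ combine into
\[c_{\vu_0}(\gamma\Psi(\vu_T)n_s)=c_{\vu_T}(\gamma)+s.\]
Writing $s^*:=c_{\vu_T}(\gamma)$, the integral over an interval $I$ is $f(\gamma\vu/T)$ if $I\supset[-s^*-1,-s^*+1]$ and $0$ if $I$ is disjoint from it. Both bullets thus reduce to locating $s^*$.

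To do so, I relate $s^*$ to $\|\gamma\|$ through $\kappa$. With $\vw:=\gamma\vu_T\in\supp(f)$, the relation $\gamma=\Psi(\vw)n_{s^*}\Psi(\vu_T)^{-1}$ together with the splitting $n_{s^*}=\bigl(\begin{smallmatrix}1 & s^*\\ 0 & 0\end{smallmatrix}\bigr)+\bigl(\begin{smallmatrix}0 & 0\\ 0 & 1\end{smallmatrix}\bigr)$ and $\Psi(\vu_T)^{-1}=a_{2\log T}\Psi(\vu)^{-1}$ gives, upon multiplying each summand on the right by $a_{2\log T}$, the scaling identity $\kappa(\vu_T,\vw,s)=T\,\kappa(\vu,\vw,s/T^2)$ together with an error contribution of size $O(T^{-1})$ from the second summand, uniformly in $\vw\in\supp(f)$. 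By the triangle inequality,
\[\|\gamma\|=T\,\kappa(\vu,\vw,s^*/T^2)+O(T^{-1}).\]

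For the first bullet, $\|\gamma\|\le T$ forces $\kappa(\vu,\vw,s^*/T^2)\le 1+O(T^{-2})$; strict convexity of $\|\cdot\|$ makes $\kappa(\vu,\vw,\cdot)$ rise transversally off its sub-level set $[\Theta^-(\vu,\vw),\Theta^+(\vu,\vw)]$, so $s^*/T^2$ lies in this interval up to $O(T^{-2})$. The inclusion $\supp(f)\subsetneq e^{\pm\sigma}\supp(f)$ combined with continuity of $\Theta^{\pm}$ gives a strictly positive gap between $\Theta^{\pm}(\vu,\vw)$ (for $\vw\in\supp(f)$) and the $\sigma$-enlarged extrema $\overline{\Theta}^+_\sigma,\underline{\Theta}^-_\sigma$; for $T$ large the gap absorbs the $O(T^{-2})$ error and yields $s^*\in T^2[\underline{\Theta}^-_\sigma,\overline{\Theta}^+_\sigma]$, equivalently $[-s^*-1,-s^*+1]\subset[-\overline{\Theta}^+_\sigma T^2-1,-\underline{\Theta}^-_\sigma T^2+1]$. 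The second bullet is the dual statement: $\|\gamma\|>T$ gives $\kappa(\vu,\vw,s^*/T^2)\ge 1-O(T^{-2})$, so $s^*/T^2$ misses $(\Theta^-(\vu,\vw),\Theta^+(\vu,\vw))$ up to $o(1)$; using the reverse inclusion $(\overline{\Theta}^-_\sigma,\underline{\Theta}^+_\sigma)\subset(\Theta^-(\vu,\vw),\Theta^+(\vu,\vw))$ and the same gap, the bump $[-s^*-1,-s^*+1]$ is disjoint from $[-\underline{\Theta}^+_\sigma T^2+1,-\overline{\Theta}^-_\sigma T^2-1]$.

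The delicate ingredient, and the one I expect to require the most care, is in the final step: upgrading the $O(T^{-2})$ pointwise perturbation of $\kappa$ into a uniform, clean inclusion of $s^*/T^2$ inside the $\sigma$-enlarged interval as $\vw$ ranges over the full support of $f$. This rests simultaneously on strict convexity of $\|\cdot\|$, which quantifies how quickly $\kappa(\vu,\vw,\cdot)$ exits its level set at $1$, and on continuity of $\Theta^{\pm}$ together with the strict enlargement $\supp(f)\subsetneq e^{\pm\sigma}\supp(f)$, which supplies the uniform positive gap that swallows the vanishing error. Once this quantitative slack is in hand, the rest of the argument is bookkeeping.
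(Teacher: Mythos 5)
Your first two steps coincide with the paper's proof: the reduction via $\Psi(\vu)a_{-2\log T}=\Psi(\vu/T)$ and the cocycle relation to locating the bump $[-1,1]-c_{\vu/T}(\gamma)$, and the decomposition of $\gamma$ into $T\,\Psi(\vv)\bigl(\begin{smallmatrix}1 & T^{-2}c\\ 0 & 0\end{smallmatrix}\bigr)\Psi(\vu)^{-1}$ plus a uniformly bounded term, giving $\|\gamma\|$ up to a small relative error in terms of $\kappa(\vu,\vv,T^{-2}c)$, are exactly the paper's. The divergence, and the genuine gap, is in the final step, which you yourself flag as the delicate one and leave as a sketch. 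The paper closes it in one line with no quantitative convexity and no gap argument: writing $\|\gamma\|=e^{\pm\sigma}T\kappa(\vu,\vv,T^{-2}c)$ for $T$ large (using that $\|\gamma\|$ is bounded below on $\Gamma_0$), the hypothesis $\|\gamma\|\le T$ gives $\kappa(\vu,e^{-\sigma}\vv,T^{-2}c)\le 1$ by the homogeneity $\kappa(\vu,\lambda\vv,s)=\lambda\kappa(\vu,\vv,s)$, hence $T^{-2}c\in[\Theta^-(\vu,e^{-\sigma}\vv),\Theta^+(\vu,e^{-\sigma}\vv)]$, which lies in $[\underline{\Theta}^-_\sigma(\vu,f),\overline{\Theta}^+_\sigma(\vu,f)]$ \emph{by the very definition} of the $\sigma$-extrema as sup/inf over $e^{\pm\sigma}\supp(f)$; the case $\|\gamma\|>T$ is the mirror image with $e^{\sigma}\vv$. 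That is precisely why the $\sigma$-enlarged quantities are defined over $e^{\pm\sigma}\supp(f)$.

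Your substitute for this step rests on two claims that do not hold as stated. First, strict convexity of the norm does not ``quantify how quickly $\kappa(\vu,\vw,\cdot)$ exits its level set at $1$'': a strictly convex function can be arbitrarily flat near the level $1$, so $\kappa\le 1+O(T^{-2})$ does not by itself place $s^*/T^2$ within $O(T^{-2})$ of $[\Theta^-,\Theta^+]$; to get a uniform rate you would need convexity of $s\mapsto\kappa(\vu,\vw,s)$ together with a uniform bound $\min_s\kappa(\vu,\vw,s)\le 1-c_0$ on the compact set $\supp(f)\subset\mathcal{D}(\vu)$, none of which you establish. Second, ``$\supp(f)\subsetneq e^{\pm\sigma}\supp(f)$ plus continuity of $\Theta^{\pm}$ gives a strictly positive gap'' is not a valid principle: a supremum over a strictly larger set need not be strictly larger (and, depending on how one reads $e^{\pm\sigma}\supp(f)$, the inclusion itself may fail). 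The gap you need, e.g.\ $\overline{\Theta}^+_\sigma(\vu,f)\ge\sup_{\vw\in\supp(f)}\Theta^+(\vu,\vw)+g$ with $g>0$, is true, but its source is again the homogeneity of $\kappa$ in $\vv$ (which forces $\Theta^+(\vu,e^{-\sigma}\vw)>\Theta^+(\vu,\vw)$), combined with compactness for uniformity --- and once you invoke that homogeneity you may as well run the paper's multiplicative argument directly, which makes the whole perturbation-plus-gap machinery unnecessary. As it stands, the crucial step of your proof is both incomplete and supported by incorrect justifications, so the argument does not go through without being repaired along the lines above.
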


 \begin{proof}
  Let $\vv=\gamma \vu/T \in \supp(f)$. 
We will write simply $c$ for $c_{\vu/T}(\gamma)$.
  
  By definition of $\tilde{f}$, for all interval $I\subset \R$, the integral
  $\int_I \tilde{f}(\Psi(\vv)n_s)ds$ is equal to $f(\vv)$ if $[-1,1]\subset I$,
 and is equal to zero if $I\cap [-1,1]=\emptyset$.
  By definition of the cocycle $c$, we have 
$$
\Psi(\vv)=\gamma \Psi(\vu/T)n_{-c}=\gamma \Psi(\vu)a_{-2\log T}n_{-c},
$$
  so that for all intervals $J\subset\R$, the integral  
$\int_{J} \tilde{f}(\gamma \Psi(\vu)a_{-2\log T} n_s)ds$ equals  $f(\vu/T)$ 
if $[-1,1]-c\subset J$, and $0$ if $([-1,1]-c)\cap J=\emptyset$.

We now estimate the size of the cocyle in terms of $||\gamma||$. 
By definition of $c=c_{\vu/T}(\gamma)$, we have
$\displaystyle 
\gamma=\Psi(\vv)\left( \begin{array}{cc} 1 & c\\  0 & 1  \end{array}\right) \Psi(\vu/T)^{-1}.$
Therefore, 
$\displaystyle 
\gamma=\Psi(\vv)\left( \begin{array}{cc} 1 & c \\  0 & 1  \end{array}\right) 
  a_{2\log T} \Psi(\vu)^{-1},$ so that 
$$
\gamma=T \Psi(\vv)\left( \begin{array}{cc} 1 & T^{-2} c \\  0 & 0  \end{array}\right) 
   \Psi(\vu)^{-1} + T^{-1} \Psi(\vv)\left( \begin{array}{cc} 0 & 0 \\ 0 & 1  \end{array}\right) 
   \Psi(\vu)^{-1}.
$$
Note that the term $\Psi(\vv)\left( \begin{array}{cc} 0 & 0 \\ 0 & 1  \end{array}\right) 
\Psi(\vu)^{-1}$ is a bounded matrix for all $\vv \in \supp(f)$. 
For $T$ large, the second term on the right-hand
side is really small, at least compared to $\|\gamma\|$, 
which is bounded from below by a positive constant uniformly on $\Gamma_0$. 
Thus, we have for all large $T$, \eq{bornegamma}
  {
  \|\gamma\|=e^{\pm \sigma} T\kappa(\vu,\vv,T^{-2} c). 
}  
  Now, assume that $\|\gamma\|\leq T$. Then
  $\kappa(\vu,\vv,T^{-2}c)\leq e^\sigma,$
  so $\kappa(\vu,e^{-\sigma}\vv,T^{-2}c)\leq 1,$ 
and by definition of the maps $\Theta$, this implies
$$
\underline{\Theta}_\sigma^-(\vu,f) \leq T^{-2} c \leq \overline{\Theta}_\sigma^+(\vu,f).
$$ 
  This proves the first point, as 
$[-1,1]-c\subset [-T^2\overline{\Theta}_\sigma^+(\vu,f)-1, -T^2\underline{\Theta}_\sigma^-(\vu,f)+1]$.\\

  For the second point, assume that $\|\gamma\|>T$, then $\kappa(\vu,\vv,T^{-2}c)>e^{-\sigma}$,
  so $\kappa(\vu,e^\sigma\vv,T^{-2}c)> 1$, thus either
  $T^{-2}c>\underline{\Theta}_\sigma^+(\vu,f)$, either $T^{-2}c<\overline{\Theta}_\sigma^-(\vu,f)$.
 In any case, the set $[-1,1]-c$ does not intersect the interval 
$[-T^2\underline{\Theta}_\sigma^+(\vu,f)+1, -T^2\overline{\Theta}_\sigma^-(\vu,f)-1]$.
\end{proof}


\subsection{Proof of theorem \ref{largescale}
for functions with  compact support in $\mathcal{D}(\vu)$}
 
As in the proof of Lemma \ref{estimeelemme}, Lemma \ref{relation-horocycle-orbit} implies the estimates, 
for a function $f$ of sufficiently small support:
$$
 \sum_{\gamma \in \Gamma_T} f\left( \frac{\gamma \vu}{T}\right) \leq 
2\int_{-\overline{\Theta}_\sigma^+(\vu,f)T^2-1}^{-\underline{\Theta}_\sigma^-(\vu,f)T^2+1} \bar{f}(h^s g^{-2\log T}u)ds,
$$
and 
$$
 \sum_{\gamma \in \Gamma_T} f\left( \frac{\gamma \vu}{T}\right) \geq 
2\int_{-\underline{\Theta}_\sigma^+(\vu,f)T^2+1}^{\overline{\Theta}_\sigma^-(\vu,f)T^2-1}
 \bar{f}(h^s g^{-2\log T}u)ds.
$$

Consider the first integral; if we neglect the $\pm 1$ in its bounds and 
 translate the interval of integration by $T^2\overline{\Theta}^m_\sigma(\vu,f)$, we get: 
$$
\int_{-\overline{\Theta}_\sigma^+(\vu,f)T^2-1}^{-\underline{\Theta}_\sigma^-(\vu,f)T^2+1} \bar{f}(h^s g^{-2\log T}u)ds
\le 2\|f\|_\infty+
\int_{-T^2\overline{\Theta}_\sigma(\vu,f)}^{T^2\overline{\Theta}_\sigma(\vu,f)} 
\bar{f} (g^{-2\log T}h^{-\overline{\Theta}^m_\sigma(\vu,f)}u)\,ds\,.
$$
Note that, if $t=T^2\overline{\Theta}_\sigma(\vu,f)$, 
$g^{-\log t}(g^{\log \overline{\Theta}_\sigma(\vu,f)}h^{-\overline{\Theta}^m_\sigma(\vu,f)} u)=
g^{-2\log T}h^{-\overline{\Theta}^m_\sigma(\vu,f)}u$,
Apply Theorem \ref{magic-equivalent-flowed}, 
for  $t$ and $u'=g^{\log \overline{\Theta}_\sigma(\vu,f)}h^{-\overline{\Theta}^m_\sigma(\vu,f)} u$.
For $T$ large enough, it gives 
$$
\frac{1}{T^{2\delta}}\sum_{\gamma\in\Gamma_T}f(\frac{\gamma\vu}{T})\le \frac{2\|f\|_\infty}{T^{2\delta}}+
\frac{2e^\sigma \overline{\Theta}_\sigma(\vu,f)^\delta \tau(g^{\log\overline{\Theta}_\sigma(\vu,f)}h^{-\overline{\Theta}^m_\sigma(\vu,f)}u)}{m^{ps}(T^1S)}
\int_{T^1S}\bar{f}\,dm\,.
$$
We have $\int_{T^1S}\bar{f}\,dm=\int_{\R^2}f\,d\bar{\mu}$. 
As all functions $\Theta$ are continuous, for a given $\eta>0$, if the support of $f$ is small enough
(see the end of the above section), we get, for $T$ large enough,  
\eq{largescalesup}{
\frac{1}{T^{2\delta}}\sum_{\gamma\in\Gamma_T}f(\frac{\gamma\vu}{T})\le
\frac{2e^\sigma e^\eta}{m^{ps}(T^1S)}
\int_{\R^2}\Theta(\vu,\vv)^\delta\,\tau(g^{\log\Theta(\vu,\vv)}h^{-\Theta^m(\vu,\vv)}\Psi(\vu))f(\vv)d\bar{\mu}(\vv)\,.
}
The same reasoning with the lower bound gives 
\eq{largescaleinf}{
\frac{1}{T^{2\delta}}\sum_{\gamma\in\Gamma_T}f(\frac{\gamma\vu}{T})\ge
\frac{2e^{-\sigma} e^{-\eta}}{m^{ps}(T^1S)}
\int_{\R^2}\Theta(\vu,\vv)^\delta\,\tau(g^{\log\Theta(\vu,\vv)}h^{-\Theta^m(\vu,\vv)}\Psi(\vu))f(\vv)d\bar{\mu}(\vv)\,.
}

Consider now a continuous, non-negative, symmetric
 function $f$ whose support is a subset of $\mathcal{D}(\vu)$.

 Let $\sigma>0$ be very small. Using a partition of unity, 
we can write $f=\sum_i f_i$ of finitely many functions of small support
 for which the previous paragraph applies.
 Then, adding the inequalities (\ref{eq: largescalesup}) and (\ref{eq: largescaleinf})
 over $i$, and since $\sigma,\eta$ were arbitrary, we obtain
 the desired result for $f$ compactly supported in $\mathcal{D}(\vu)$.


\subsection{Proof of theorem  \ref{largescale}  for functions with    support in $\mathcal{D}_0(\vu)$}
Now, $f$ is continuous, nonnegative, and  supported in $\R^2\setminus\{0\}$. 

\begin{lem} \label{probadisque}
For any compact neighbourhood $W$ of $\overline{\mathcal{D}(\vu)}$, 
there exists $T_0>0$ such that for all $T\ge T_0$, 
and all $\gamma \in \Gamma_T$, we have $\gamma \vu/T \in W$. 
\end{lem}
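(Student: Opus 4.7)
The plan is to reuse the matrix decomposition obtained in the proof of Lemma \ref{relation-horocycle-orbit}. Setting $\vv=\gamma\vu/T$ and $c=c_{\vu/T}(\gamma)$, the same manipulation leading to (\ref{eq: bornegamma}) yields
\[
\gamma = T\,\Psi(\vv)\begin{pmatrix}1 & c/T^{2}\\ 0 & 0\end{pmatrix}\Psi(\vu)^{-1} + T^{-1}\,\Psi(\vv)\begin{pmatrix}0 & 0\\ 0 & 1\end{pmatrix}\Psi(\vu)^{-1}.
\]
The second summand has norm at most $C'/|\vv|$ for a constant $C'=C'(\vu)$, because the rank-one matrix $\Psi(\vv)\begin{pmatrix}0&0\\ 0&1\end{pmatrix}$ has $\ell^{2}$-operator norm $1/|\vv|$ and any two norms on $M(2,\R)$ are equivalent. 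Consequently, as soon as $\|\gamma\|\le T$ and $|\vv|\ge 1/T$, the triangle inequality gives
\[
\kappa(\vu,\vv,c/T^{2})\le 1+\frac{C'}{T^{2}|\vv|}\le 1+\frac{C'}{T}.
\]

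Before using this, I would first observe that $\overline{\mathcal{D}(\vu)}=\overline{\mathcal{D}_0(\vu)}$. Indeed $\kappa$ extends continuously to $\vv=0$ by $\kappa(\vu,0,s)=0$ (since $\Psi(\vv)\begin{pmatrix}1&s\\ 0&0\end{pmatrix}$ has all entries proportional to $\vv_x$ or $\vv_y$), so $\mathcal{D}_0(\vu)$ is open; combined with the fact that it is starshaped from $0$, this shows that $0$ is a limit point of $\mathcal{D}(\vu)=\mathcal{D}_0(\vu)\setminus\{0\}$. Since $W$ is then an open neighbourhood of the compact set $\overline{\mathcal{D}_0(\vu)}$, I would pick $\epsilon>0$ with $B(0,\epsilon)\subset W$, and then $\sigma>0$ small enough that $e^{\sigma}\overline{\mathcal{D}_0(\vu)}\subset W$ (possible because $\overline{\mathcal{D}_0(\vu)}$ is compact and the dilations $\vv\mapsto e^{\sigma}\vv$ depend continuously on $\sigma$).

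With these constants fixed, the argument splits according to the size of $|\vv|=|\gamma\vu|/T$. When $|\vv|\ge 1/T$, the displayed estimate gives $\kappa(\vu,\vv,c/T^{2})\le e^{\sigma}$ for $T$ large enough, and the scalar homogeneity $\kappa(\vu,\lambda\vv,s)=\lambda\kappa(\vu,\vv,s)$ in $\vv$ then forces $e^{-\sigma}\vv\in\overline{\mathcal{D}_0(\vu)}$, i.e.\ $\vv\in e^{\sigma}\overline{\mathcal{D}_0(\vu)}\subset W$. When $|\vv|<1/T$, we simply have $\vv\in B(0,1/T)\subset B(0,\epsilon)\subset W$ as soon as $T\ge 1/\epsilon$. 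Choosing $T_{0}$ larger than both thresholds finishes the proof.

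The only delicate point is this dichotomy on $|\vv|$: the error term $T^{-1}\Psi(\vv)\begin{pmatrix}0&0\\ 0&1\end{pmatrix}\Psi(\vu)^{-1}$ blows up as $\vv\to 0$, so the comparison between $\|\gamma\|$ and $T\kappa(\vu,\vv,c/T^{2})$ cannot be used uniformly near the origin. This is precisely why we treat the small-$|\vv|$ regime separately, exploiting that $0\in\overline{\mathcal{D}_0(\vu)}$ so that $W$ automatically contains a neighbourhood of the origin.
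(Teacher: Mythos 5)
Your proof is correct and follows essentially the same route as the paper: both rely on the decomposition of $\gamma$ obtained from the cocycle $c_{\vu/T}(\gamma)$ and the section $\Psi$, deduce the bound $\kappa(\vu,\vv,T^{-2}c)\le e^{\sigma}$ for $\gamma\in\Gamma_T$, and use the homogeneity of $\kappa$ in $\vv$ to conclude $e^{-\sigma}\vv\in\overline{\mathcal{D}_0(\vu)}$, hence $\vv\in e^{\sigma}\overline{\mathcal{D}_0(\vu)}\subset W$ for $\sigma$ small. Your dichotomy on $|\vv|$ is in fact a useful refinement of the paper's argument, which invokes (\ref{eq: bornegamma}) for all $\gamma\in\Gamma_T$ even though that estimate was established for $\vv$ ranging in a fixed compact subset of $\R^2\setminus\{0\}$; you justify the needed one-sided bound when $|\vv|\ge 1/T$ and absorb the remaining vectors into a small ball around $0$, which lies in $W$ because $0$ is interior to $\mathcal{D}_0(\vu)$.
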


\begin{proof}
 For all $\sigma>0$ and sufficiently large $T$, Equation (\ref{eq: bornegamma}) 
implies that for all $\gamma \in \Gamma_T$ and $\vv=\gamma \vu/T$,
$c=c_{\vu/T}(\gamma)$, we have 
 $$
\kappa(\vu,\vv,T^{-2}c)\leq e^\sigma,
$$
that is
$$
\kappa(\vu,e^{-\sigma}\vv,T^{-2}c)\leq 1,
$$
 so that $e^{-\sigma}\vv \in \overline{\mathcal{D}(\vu)}$. 
If $\sigma$ is small enough so that $e^{\pm \sigma}\overline{\mathcal{D}(\vu)}\subset W$, this proves the claim.
\end{proof}

 Let $\epsilon>0$. 
Write
 $$
\Xi(\vu,\vv)=\Theta(\vu,\vv)^\delta \tau(g^{\log \Theta(\vu,\vv)}h^{-\Theta^m(\vu,\vv)} \Psi(\vu)),
$$
Choose a compact neighbourhood $Q$ of $\partial \mathcal{D}_0(\vu)$, and a neighbourhood $W$ of 
$\mathcal{D}(\vu)$ in $\R^2\setminus\{0\}$ such that $W\subset \mathcal{D}(\vu)\cup Q^\circ$. 
 Assume that $Q$ is 
thin enough so that $Q \subset  \mathcal{D}(2\vu)$ and 
$$
\int_{Q} \Xi(2\vu,\vv)d\bar{\mu}<\epsilon.
$$ 
 This is possible because $\bar{\mu}(\partial \mathcal{D}_0(\vu))=0$, as $\mathcal{D}_0(\vu)$ 
is a star-shaped set around $0$, its boundary is the graph of a function of the angle, and $\bar{\mu}$ is a product measure in polar
 coordinates.
 
Consider a continuous, nonnegative function $f:\R^2\setminus\{0\}\to \R$. 
We can decompose $f=f_\mathcal{D}+f_Q+f_\infty$ as a sum of continuous functions,
 where $f_\mathcal{D}$ has compact support 
in $\mathcal{D}(\vu)$, $f_Q$ has compact support in $Q$ and 
is bounded by $\sup_Q f$, and $f_\infty$ has support outside $W$.

We already know that theorems \ref{largescale} and  \ref{largescale} bis apply to $f_\mathcal{D}$. 

By the previous Lemma, for $T$ sufficiently large, 
$$
\sum_{\gamma \in \Gamma_T} f_\infty(\gamma \vu/T)=0,
$$
so that we need only to show that the sum $\sum_{\gamma \in \Gamma_T} f_Q(\gamma \vu/T)$ is small.
 However, $\mbox{supp}(f_Q)\subset Q \subset 2\mathcal{D}(\vu)=\mathcal{D}(2\vu)$. 
So we can apply the result for $f_Q$ and $2\vu$, which gives 
$$
 \lim_{T\rightarrow +\infty} \frac1{2^{2\delta}T^{2\delta}}\sum_{\gamma \in \Gamma_{2T}} f_Q\left(\frac{\gamma \vu}{T}\right)=\int_{Q} \Xi(2\vu,\vv)f_Q(\vv)d\bar{\mu}(\vv),
$$
which is smaller than $\epsilon \sup_Q f$. Thus,
$$
\frac1{T^{2\delta}}\sum_{\gamma \in \Gamma_{T}} f_Q\left(\frac{\gamma \vu}{T}\right)\leq 
2^{2\delta} \epsilon \sup_Q f.
$$
This proves that theorem \ref{largescale}  holds for all continuous 
functions $f$  supported in $\R^2\setminus \{0\}$.


\subsection{Proof of theorem \ref{largescale} in the case of the $l^2$-norm}

\begin{lem}\label{probability}
 Assume that $|\vu|=1$ and that the norm $\|.\|$ is the $l^2$-norm. Then the integral
$$
I=\int_{\mathcal{D}(\vu)} \Xi(\vu,\vv) d\bar{\mu}(\vv),
$$
is equal to $\frac{1}{\delta}$.
\end{lem}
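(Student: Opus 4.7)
The plan is to rewrite $\Xi(\vu,\vv)$ as the $\mu_{H^-}$-mass of a segment on the horocycle through $\Psi(\vu)$, apply Fubini to swap the order of integration, and then reduce to an explicit computation using the rotational symmetry of the $l^2$-norm and the definition of the conditional measure $\mu_{H^-}$.

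First I would invoke the identity $R^\delta \tau(g^{\log R} u) = \mu_{H^-}((h^s u)_{|s|\le R})$ from the start of Section 3, together with the fact that $\Theta^m(\vu,\vv) = 0$ for the $l^2$-norm, to write
\begin{equation*}
\Xi(\vu,\vv) = \int_{\R} \mathbf{1}_{\{|s|\le \Theta(\vu,\vv)\}}\, d\mu_{H^-(\Psi(\vu))}(h^s\Psi(\vu)).
\end{equation*}
Inserting this into the integral defining $I$ and applying Fubini (the integrand is nonnegative) yields
\begin{equation*}
I = \int_{\R}  \bar{\mu}\Big(\{\vv\in \mathcal{D}(\vu): |s|\le \Theta(\vu,\vv)\}\Big)\, d\mu_{H^-(\Psi(\vu))}(h^s\Psi(\vu)).
\end{equation*}

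Next, for the $l^2$-norm with $|\vu|=1$, the inequality $|s|\le \Theta(\vu,\vv) = |\vv|^{-1}\sqrt{1-|\vv|^2}$ is equivalent to $|\vv|\le 1/\sqrt{1+s^2}$, so the level set is the Euclidean disk $D(0,1/\sqrt{1+s^2})$. In polar coordinates $d\bar{\mu} = 2 r^{2\delta-1}\, dr\, d\bar{\nu}_o(\theta)$, and using $\bar{\nu}_o(S^1)=1$ I get $\bar{\mu}(D(0,R)) = R^{2\delta}/\delta$. Hence the inner factor equals $\frac{1}{\delta(1+s^2)^\delta}$ and the problem reduces to establishing
\begin{equation*}
J := \int_{\R} \frac{d\mu_{H^-(\Psi(\vu))}(h^s\Psi(\vu))}{(1+s^2)^\delta} = 1.
\end{equation*}

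To compute $J$ I would use $d\mu_{H^-(v)}(w) = e^{\delta \beta_{w^-}(o,\pi(w))}\,d\nu_o(w^-)$. Because $|\vu|=1$, $\Psi(\vu)\in K=\mathrm{SO}(2)$ fixes the origin $o=i$; combined with the commutation $\Psi(\vu)\circ h^s = h^s \circ \Psi(\vu)$, this reduces the computation to the model case $\vu=(1,0)$, $\Psi(\vu)=I$. There the horocycle $(h^s u_0)_{s\in\R}$ is the horizontal line $\{y=1\}$ with upward-pointing unit vectors, the endpoint parametrization is $(h^s u_0)^- = s$, and a direct half-plane computation gives $\beta_s(i,i+s) = \log(1+s^2)$. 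Hence $d\mu_{H^-(u_0)}(h^s u_0) = (1+s^2)^\delta d\nu_o(s)$, the weights cancel, and $J = \nu_o(\partial \H) = 1$ (using non-atomicity of $\nu_o$). For a general unit $\vu$, the isometry invariance $\beta_{g\xi}(gx,gy) = \beta_\xi(x,y)$ applied with $g = \Psi(\vu)^{-1}$ (which fixes $o$) gives the same cocycle $\log(1+s^2)$ after changing variable $\xi = \Psi(\vu)\cdot s$.

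The only delicate point I anticipate is the Busemann bookkeeping in the final step: verifying that the cocycle collapses to $\log(1+s^2)$ uniformly for every unit $\vu$, not only for the model vector $(1,0)$. Once the $K$-stabilization of $o$ is exploited together with the commutation between $K$ and $h^s$, this calculation becomes one line and the $(1+s^2)^{\pm\delta}$ factors cancel, producing $J=1$ and hence $I = 1/\delta$.
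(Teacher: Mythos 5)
Your proof is correct and follows essentially the same route as the paper: both expand $\mu_{H^-(\Psi(\vu))}$ via the Busemann density $(1+s^2)^{\delta}\,d\nu_o$, swap the order of integration over the region $\{r\le (1+s^2)^{-1/2}\}$ by Fubini, and use the cancellation $(1+s^2)^{\delta}\cdot\frac{(1+s^2)^{-\delta}}{\delta}$ together with $\nu_o(\partial\H\setminus\{u^+\})=1$ (non-atomicity). The only cosmetic difference is that you perform the Fubini swap before expanding $\mu_{H^-}$ and make the rotation-invariance reduction to $\Psi(\vu)=I$ explicit, whereas the paper expands first and computes the cocycle directly for a general unit $\vu$.
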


\begin{proof}
In the case of $l^2$-norm, ( $\Theta^m=0$,) $\Theta(\vu,\vv)=r^{-1}\sqrt{1-r^2}$ 
where $r=|\vv|$, $\mathcal{D}(\vu)=D(0,1)\setminus\{0\}$. 
So the integral is equal to 
$$
I = \int_{0}^1 \int_{\theta \in S^1} \mu_{H^-(u)}\left((h^sw)_{|s|\le \Theta(r)}\right)2r^{2\delta-1}drd\bar{\nu}_o(\theta).
$$
Observe that the quantity to integrate in the variable $v$ depends only on $r=|v|$, and not on $\theta$. As
$\bar{\nu}_0$ is a probability measure on $S^1$, we can forget it.
Since $|\vu|=1$, $o=\pi(u)$, and for all $s \in \R$, we can compute 
the Busemann cocycle in the upper-half-plane model: 
$$
e^{-\beta_{(h^s u)^-}(\pi(h_s u),o)}=s^2+1.
$$ 
Thus, using the fact that
$d\mu_{H^-(u)}=e^{-\delta \beta_{(h^s u)^-}(\pi(h^s u),o)}d\nu_o$, 
$$
I = \int_{0}^1 \int_{-\Theta(r)}^{\Theta(r)} 2r^{2\delta-1}(s^2+1)^{\delta}d\nu_o((h^su)^-)dr.
$$
The set of integration is given by 
$\{(r,s)\,:\, |s|\leq \Theta(r)\}=\{(r,s)\, : r^2\leq 1/(s^2+1)\,\}$, so by Fubini
$$
I= \int_\R \int_0^{(s^2+1)^{-1/2}} 2r^{2\delta-1}(s^2+1)^\delta dr d\nu_o((h^su)^-),
$$
that is
$$
I= \int_\R \left( \int_0^{(s^2+1)^{-1/2}} 2r^{2\delta-1}dr \right) (s^2+1)^{\delta} d\nu_o((h^su)^-),
$$
so $I=\frac1\delta \nu_o(\partial \H-\{u^+\})=\frac1\delta$.
\end{proof}

For any fixed norm, the integral considered in the previous Lemma does in fact
 not depend on $\vu$ (this is a corollary of Theorem \ref{largescale} applied to the constant function 1); 
we are however unable to prove this directly for others norms than the $l^2$-norm.\\

\begin{lem}\label{cardinal-orbit} When the norm is the $l^2$ norm, 
the counting function $|\Gamma_T|$ has the following asymptotic.
$$
|\Gamma_T|\sim_{T\rightarrow +\infty} \frac{2T^{2\delta}}{\delta m^{ps}(T^1S)}.
$$
\end{lem}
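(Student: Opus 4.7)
The plan is to translate this counting problem, stated in terms of the matrix norm, into a hyperbolic lattice-point counting problem on $\H$ and then invoke Roblin's orbit counting theorem \cite{Roblin} for geometrically finite Fuchsian groups.

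The first step is to express $\|\gamma\|$ in terms of the hyperbolic displacement $d(o,\gamma\cdot o)$. Using the $KAK$ decomposition $\gamma = k_1 a_t k_2$ with $t\ge 0$ and $k_i \in SO(2)$, the orthogonality of $SO(2)$ for the Frobenius inner product yields
\[
\|\gamma\|^2 \;=\; \operatorname{tr}(\gamma^T\gamma) \;=\; \operatorname{tr}(a_{2t}) \;=\; 2\cosh t \;=\; 2\cosh d(o,\gamma\cdot o),
\]
since $SO(2)$ fixes $o=i$ and $a_t\cdot o = e^t o$. Consequently, $\|\gamma\|\le T$ is equivalent to $d(o,\gamma\cdot o) \le 2\log T + o(1)$ as $T\to +\infty$.

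The second step is to apply Roblin's counting theorem to $\Gamma = \Gamma_0/\{\pm I\}$. Since $\Gamma_0$ is finitely generated and nonelementary, $\Gamma$ is a finitely generated discrete subgroup of $\PSL(2,\R)$, hence geometrically finite. With the Patterson density normalized by $\nu_o(\partial\H)=1$ as in \S\ref{2}, Roblin's theorem asserts
\[
\#\{\bar\gamma\in\Gamma \,:\, d(o,\bar\gamma\cdot o)\le R\} \;\sim\; \frac{e^{\delta R}}{\delta\, m^{ps}(T^1 S)} \quad \text{as } R\to+\infty.
\]
Substituting $R = 2\log T + o(1)$ and noting that the $o(1)$ error contributes only a $1+o(1)$ factor to $e^{\delta R}$ gives $\#\{\bar\gamma\in\Gamma \,:\, \|\bar\gamma\|\le T\} \sim T^{2\delta}/(\delta\, m^{ps}(T^1 S))$.

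Finally, I would pass from $\Gamma$ to $\Gamma_0$: since $-I\in\Gamma_0$ and $\|\gamma\|=\|-\gamma\|$ in the $l^2$ norm, the natural projection is two-to-one and preserves the norm, giving $|\Gamma_T| = 2 \cdot \#\{\bar\gamma\in\Gamma \,:\, \|\bar\gamma\|\le T\}$. Combining the steps yields the claim. The main subtlety is ensuring the normalizations of the Patterson-Sullivan measure and of $m^{ps}$ agree between Roblin's monograph and this paper; both use the convention $\nu_o(\partial\H)=1$, so the constants line up without further adjustment.
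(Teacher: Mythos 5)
Your proposal is correct and follows essentially the same route as the paper: the identity $\|\gamma\|^2=2\cosh d(o,\gamma o)$ for the $l^2$-norm, Roblin's counting theorem $\#\{\gamma\in\Gamma : d(o,\gamma o)\le t\}\sim e^{\delta t}/(\delta\, m^{ps}(T^1S))$, and the factor $2$ from the two-to-one map $\Gamma_0\to\Gamma$. The only cosmetic difference is that you expand $\cosh^{-1}(T^2/2)=2\log T+o(1)$ while the paper substitutes $\cosh^{-1}(T^2/2)$ directly, which changes nothing.
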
 

\begin{proof}
 When using the $l^2$-norm, we get 
 $\displaystyle 
||\gamma||^2=2\cosh(d(o,\gamma o)),$
 thus
 $$
|\Gamma_T|=|\{\gamma \in\Gamma_0 \, : \, ||\gamma||\leq T \}| = 
2|\{\gamma \in \Gamma \, : \, d(o,\gamma o) \leq \cosh^{-1}(T^2/2) \}|.
$$
 By \cite[Thm 4.1.1]{Roblin}, the counting function has the following asymptotic
 $$
|\{\gamma \in \Gamma \, : \, d(o,\gamma o) \leq t \}|\sim_{t\rightarrow +\infty}
 \frac{e^{\delta t}}{\delta m^{ps}(T^1 S)},$$
 which implies the result.
\end{proof}

 Now, assuming the norm is the $l^2$-norm, let $\nu$ be any weak limit
 of the sequence of probability measures
 $\nu_T =\frac{1}{|\Gamma_T|}\sum_{\gamma \in \Gamma_T} \delta_{\gamma\vu/T}$, where $\delta_x$ 
is the Dirac mass at the point $x$.
By lemma \ref{probadisque},  $\nu$ is 
a probability measure 
supported by $\overline{\mathcal{D}_0(\vu)}$.
 Let $\nu'$ be the measure
 $$
d\nu'=\delta \Xi(\vu,.)d\bar{\mu}.
$$
We have seen (lemma \ref{probability}) that $\nu'$ is a probability,
 and we know that for $f$ continuous, and 
 compactly supported in $\R^2\setminus\{0\}$,
$$
\nu(f)=\lim_{T\rightarrow +\infty} \nu_T(f)=
\frac{\delta m^{ps}(T^1S)}{2}\frac{2}{m^{ps}(T^1S)}\int \Xi f d\bar{\mu}=\nu'(f).
$$
Since $\nu$ and $\nu'$ are probabilities, $\nu(\{0\})=0$, so $\nu=\nu'$, 
which concludes the proof in the case of the $l^2$-norm.


\subsection{Proof of theorem \ref{largescale} for an arbitrary norm}
 For an arbitrary - strictly convex -  norm, we have to show that the measures 
$\nu_T$ do not accumulate around zero, that is, for all $\epsilon>0$, 
there is a neighbourhood $W$ of $0$ such that for large $T$,
 $\sum_{\gamma \in \Gamma_T} 1_W(\gamma \vu/T) < \epsilon T^{2\delta}$.
 Denote by $\Gamma_T^{l^2}$ the set of matrices of norm less than 
$T$ for the $l^2$-norm; there exists $c>0$ such 
that $\Gamma_T\subset \Gamma_{cT}^{l^2}$. 
Now take $W$ such that 
$\sum_{\gamma \in \Gamma_T^{l^2}} 1_{W/c}(\gamma \vu/T) < \epsilon T^{2\delta}$,
 we have
 $$
\sum_{\gamma \in \Gamma_T} 1_W(\gamma \vu/T) \leq \sum_{\gamma \in \Gamma_{cT}^{l^2}} 1_W(\gamma \vu/T)
=\sum_{\gamma \in \Gamma_{cT}^{l^2}} 1_{W/c}(\gamma \vu/(cT))<\epsilon c^{2\delta}T^{2\delta},
$$
 as required.
 

\section{Acknowledgments}

 The first named author wish to thank the Bernoulli Center at EPFL for its hospitality.
The second author benefited from the ANR grant ANR-10-JCJC 0108 during the redaction of this article.



\end{document}